\newtheorem{theorem}{Theorem}[section]
\newtheorem{corollary}[theorem]{Corollary}
\newtheorem{lemma}[theorem]{Lemma}
\newtheorem{proposition}[theorem]{Proposition}
\theoremstyle{definition}
\newtheorem{definition}[theorem]{Definition}
\newtheorem{remark}[theorem]{Remark}
\newtheorem{example}[theorem]{Example}
\numberwithin{equation}{section}
\DeclareMathAlphabet{\mathpzc}{OT1}{pzc}{m}{it}
\DeclareMathOperator{\Perf}{\mathsf{Perf}}
\renewcommand{\ker}{\mathsf{ker}}
\newcommand{\im}{\mathsf{im}}
\DeclareMathOperator{\tor}{\mathsf{tor}}
\DeclareMathOperator{\Tri}{\mathsf{Tri}}
\DeclareMathOperator{\can}{\mathsf{can}}
\DeclareMathOperator{\CM}{\mathsf{CM}}
\DeclareMathOperator{\MF}{\mathsf{MF}}
\DeclareMathOperator{\Rep}{\mathsf{Rep}}
\DeclareMathOperator{\Ob}{\mathsf{Ob}}
\DeclareMathOperator{\depth}{\mathsf{depth}}
\DeclareMathOperator{\Hom}{\mathsf{Hom}}
\DeclareMathOperator{\Ext}{\mathsf{Ext}}
\DeclareMathOperator{\GL}{\mathsf{GL}}
\DeclareMathOperator{\Aut}{\mathsf{Aut}}
\DeclareMathOperator{\Ann}{\mathsf{ann}}
\DeclareMathOperator{\End}{\mathsf{End}}
\DeclareMathOperator{\Mat}{\mathsf{Mat}}
\newcommand{\bx}{\mathbf{x}}
\newcommand{\by}{\mathbf{y}}
\newcommand{\bz}{\mathbf{z}}
\newcommand{\kk}{\mathbbm{k}}
\newcommand{\llbrace}{(\!(}
\newcommand{\rrbrace}{)\!)}
\newcommand{\FF}{\mathbb{F}}
\newcommand{\GG}{\mathbb{G}}
\newcommand{\II}{\mathbb{I}}
\newcommand{\JJ}{\mathbb{J}}
\renewcommand{\mod}{\mathsf{mod}}
\newcommand{\lar}{\longrightarrow}
\newcommand{\CA}{\mathsf A} 
\newcommand{\RD}{\mathsf D}
\newcommand{\RP}{\mathsf P}
\newcommand{\RQ}{\mathsf Q}
\newcommand{\RT}{\mathsf T}
\newcommand{\rA}{R}
\newcommand{\rR}{S}
\DeclareMathOperator{\Tau}{T}
\newcommand{\RA}{\mathsf R}
\newcommand{\RR}{\mathsf S}
\newcommand{\idm}{\mathfrak{m}}
\newcommand{\idn}{\mathfrak{n}}
\newcommand{\idp}{\mathfrak{p}}
\newcommand{\mM}{N}
\newcommand{\DD}{\mathbb{D}}
\newcommand{\PP}{\mathbb{P}}
\def\dE{\mathfrak E}		\def\dF{\mathfrak F}		
\def\dX{\mathfrak B}
\def\rep{\mathop\mathsf{Rep}}
\def\Mat{\mathop\mathrm{Mat}}
\def\End{\mathop\mathrm{End}\nolimits}
\def\8{\infty}			
	\def\+{\oplus}		
\def\*{\otimes}
	\def\NN{\mathbb N}
\def\dX{\mathfrak B}
\def\DMO{\DeclareMathOperator}
\DMO{\ob}{Ob}            \DMO{\mor}{Mor}
\DMO{\Ker}{Ker}
\DMO{\id}{Id}
\newcommand{\hdotline}[3]{\draw[dotted] (#1-#2-1.south west) -- (#1-#2-#3.south east);}
\newcommand{\vdotline}[3]{\draw[dotted] (#1-1-#3.north east) -- (#1-#2-#3.south east);}
\newcommand{\hdline}[3]{\draw[dashed] (#1-#2-1.south west) -- (#1-#2-#3.south east);}
\newcommand{\hsline}[3]{\draw[very thick] (#1-#2-1.south west) -- (#1-#2-#3.south east);}
\newcommand{\sfrm}[3]{
\node[draw,solid, thick, fit=(#1-1-1)(#1-#2-#3), inner sep=0pt]{};}
\tikzset{
    tbl5 nodes/.style={
        rectangle,
        execute at begin node=$,
       execute at end node=$,
       fill=blue!5,
        align=center,
        text depth=0.5ex,
        text height=2ex,
        inner xsep=0pt,
        outer sep=0pt,
           },
    tbl5/.style={
        matrix of nodes,
        row sep=-\pgflinewidth,
        column sep=-\pgflinewidth,
        nodes={
            tbl5 nodes
        },
        execute at empty cell={\node[draw=none]{};}
    }
  }
\tikzset{
    table nodes/.style={
        rectangle,
        fill=blue!5,
        draw,
        thin, dashed,
        align=center,
        minimum height=20pt,
        text depth=0.5ex,
        text height=2ex,
        inner xsep=0pt,
        outer sep=0pt
    },
    table/.style={
        matrix of nodes,
        row sep=-\pgflinewidth,
        column sep=-\pgflinewidth,
        nodes={
            table nodes
        },
        execute at empty cell={\node[draw=none]{};}
    }
  }
\tikzset{
    table2 nodes/.style={
        rectangle,
        draw,
        thin, dashed,
        align=center,
        text depth=0.5ex,
        text height=2ex,
        inner xsep=0pt,
        outer sep=0pt
    },
    table2/.style={
        matrix of nodes,
        row sep=-\pgflinewidth,
        column sep=-\pgflinewidth,
        nodes={
            table2 nodes
        },
        execute at empty cell={\node[draw=none]{};}
    }
  }
\title[Cohen--Macaulay modules over non--reduced curve singularities]{Cohen--Macaulay modules over some non--reduced curve singularities}
\author{Igor Burban}
\address{
Universit\"at zu K\"oln,
Mathematisches Institut,
Weyertal 86-90,
D-50931 K\"oln,
Germany
}
\email{burban@math.uni-koeln.de}
\author{Wassilij Gnedin}
\address{
Universit\"at zu K\"oln,
Mathematisches Institut,
Weyertal 86-90,
D-50931 K\"oln,
Germany
}
\email{wgnedin@math.uni-koeln.de}
\begin{document}
\maketitle

\begin{abstract} In this article, we study Cohen--Macaulay modules over non--reduced curve singularities. We prove that the rings
$\kk \llbracket x,y,z \rrbracket / (xy, y^q - z^2)$
have  tame  Cohen--Macaulay  representation type.
For  the  singularity $\kk \llbracket x,y,z \rrbracket / (xy, z^2)$ we give an explicit description
of all indecomposable Cohen--Macaulay modules and apply the obtained classification
to construct  explicit families of indecomposable matrix
factorizations of $x^2 y^2 \in \kk\llbracket x, y\rrbracket$.
\end{abstract}
\maketitle

\section*{Introduction}
Cohen--Macaulay modules over Cohen--Macaulay rings have been   intensively studied in recent years.
They appear in the literature in various incarnations  like matrix factorizations, objects of the triangulated
category of singularities
or lattices over orders.

Our interest to Cohen--Macaulay modules is representation theoretic. In the case of a \emph{reduced} curve singularity, the behavior of the representation type of the category of Cohen--Macaulay modules $\CM(A)$ is completely understood. Assume, for simplicity, that $A$ is an algebra over an algebraically closed field $\kk$ of characteristic zero.

\begin{itemize}
\item According to Drozd and Roiter \cite{DroRoi}, Jacobinski \cite{Jacob} and Greuel and Kn\"orrer \cite{GreuelKnoerrer},
$\CM(A)$ is representation finite if and only if $A$ dominates a simple curve singularity.
See also the expositions in the monographs \cite{LeuschkeWiegand} and \cite{Yoshino}.
\item Drozd and Greuel have proven in \cite{DG}
that $\CM(A)$ is tame if and only
if $A$ dominates a singularity of type
\begin{align*}
&&  \RT_{pq}(\lambda) = \kk \llbracket x,y \rrbracket / (x^{p-2} - y^2)(x^2 - \lambda y^{q-2})&  & \begin{cases} \tfrac{1}{p} + \tfrac{1}{q} = \tfrac{1}{2}, & \lambda \in
\kk\backslash\{0,1\}, \\
\tfrac{1}{p} + \tfrac{1}{q} < \tfrac{1}{2}, & \lambda = 1.
\end{cases}
 \end{align*}
In particular, they have shown that the singularities
\begin{align*}
\RP_{pq} = \kk\llbracket x,y,z\rrbracket/(xy, x^p + y^q - z^2), \qquad \qquad \mbox{where }   p, q  \in \mathbb{N}_{\ge 2}, \quad
\end{align*}
are Cohen--Macaulay tame.
\item A reduced curve singularity which neither dominates a simple nor a $\RT_{pq}(\lambda)$ singularity has wild Cohen--Macaulay representation type \cite{DG-92}.
\item There are also other approaches  to establish  tameness  of $\CM\bigl(\RT_{pq}(\lambda)\bigr)$: one
 using  the generalized geometric McKay Correspondence
\cite{Kahn, DGK} and another via  cluster--tilting theory \cite{BIKR}.
\end{itemize}

\noindent
The following results about the representation type of a non--reduced curve singularity are known so far.
\begin{itemize}
\item By a theorem of Auslander \cite{Auslander}, a non--reduced curve singularity always  has infinite
 Cohen--Macaulay representation type.
\item Buchweitz, Greuel and Schreyer have shown in \cite{BGS} that the  singularities $\CA_\infty = \kk\llbracket x, y\rrbracket/(y^2)$ and $\RD_\infty = \kk\llbracket x,y\rrbracket/(x y^2)$ have discrete Cohen--Macaulay representation type.
\item Leuschke and Wiegand have proven in \cite{LW-05} that
$\CA_\infty$, $\RD_\infty$ and  $\kk\llbracket x,y,z\rrbracket/(xy, yz, z^2)$ are the only curve singularities of bounded but infinite Cohen--Macaulay type.
\item Burban and Drozd have proven in \cite{BDNonIsol} that the hypersurface singularities $\RT_{\infty q} = \kk\llbracket x, y\rrbracket/(x^2 y^2 - y^q)$, where $q \in \mathbb{N}_{\ge 3}$,
(respectively
 $\RT_{\infty \infty} = \kk\llbracket x, y\rrbracket/(x^2 y^2)$) are Cohen--Macaulay tame (under the additional assumption that $\mathsf{char}(\kk) = 0$,  respectively $\mathsf{char}(\kk) \ne 2$).
 However, an explicit description of the corresponding indecomposable matrix factorizations is still not known.
\end{itemize}

\noindent
In this article, we obtain  the following results.

\medskip
\noindent
 1.~First, we prove (see Theorem \ref{T:main1}) that the curve singularities
$$
\RP_{\infty q} = \kk\llbracket x, y, z\rrbracket/(xy, y^q - z^2) \quad \mbox{ and} \quad
\RP_{\infty \infty} = \kk\llbracket x,y,z\rrbracket/(xy, z^2)
$$ are Cohen--Macaulay tame for any algebraically closed field $\kk$ of any characteristic (in the case $\mathsf{char}(\kk) = 2$ the definition
of $\RP_{\infty q}$ has to be modified, see Remark \ref{R:RonPQchar2}). The method of the proof extends the approach of Drozd
and Greuel \cite{DG} to the case of non--reduced curve singularities and is based on Bondarenko's work on representations of
\emph{bunches of semi--chains} \cite{bo1}. Our approach can be summarized by the following diagram of categories and functors:
$$
\xymatrix@M+1pt{
\CM(\RP)
& \CM(\RA) \ar@/^/[r]^-{\FF} \ar@{}[r]|-{\sim} \ar@{_{(}->}[l]_-{\II}  & \Tri(\RA) \ar[r]^-{\PP} \ar@/^/[l]^-{\GG} & \Rep(\dX). \\
}
$$
We start with a singularity $\RP = \RP_{\infty q}$ or $\RP_{\infty\infty}$ and replace it by its \emph{minimal overring} $\RA$. The forgetful functor
 $\II$ embeds $\CM(\RA)$ into
$\CM(\RP)$ as a full subcategory. By a result of Bass \cite{Bass}, the  ``difference'' between $\CM(\RA)$ and $\CM(\RP)$
 is very small. The \emph{category of triples}
$\Tri(\RA)$ plays a key role in our approach.  According to \cite{BDNonIsol}, the functors
 $\FF$ and $\GG$ are quasi--inverse equivalences of categories. Finally, $\Rep(\dX)$ is a certain \emph{bimodule category} in the sense of \cite{DrozdLOMI}.
The functor $\PP$ preserves isomorphy classes and indecomposability of objects.
  We prove that $\Rep(\dX)$ is the category of representations of a certain \emph{bunch of semi--chains}.
According to a theorem of Bondarenko \cite{bo1},  $\Rep(\dX)$ is representation tame. This implies tameness of $\CM(\RP)$.

\medskip
\noindent
2.~Next, we show how to  pass
from  canonical forms  describing indecomposable objects of $\Rep(\dX)$
 to a concrete description of the corresponding indecomposable Cohen--Macaulay $\RP$--modules.
 We illustrate this technique giving  an explicit description of the indecomposable Cohen--Macaulay modules over $\RP_{\infty\infty}$. They  are described  in terms of quite transparent combinatorial data: \emph{bands} and \emph{strings}, see Theorem \ref{C:main}. The obtained classification  turns out to be perfectly adapted to separate those Cohen--Macaulay modules which are \emph{locally free on the punctured spectrum} from those which are not, see Remark \ref{R:locfreePSpec}.

\medskip
\noindent
3.~At last, we construct   explicit families of indecomposable matrix factorizations of $x^2 y^2 \in \kk\llbracket x, y\rrbracket$.
In this context, there is the following diagram of categories and functors:
$$
\xymatrix@M+1pt{
\CM(\RA)
\ar@{^{(}->}[r]^-{\JJ}
& \CM(\RT) \ar[r]   & \underline{\MF}(x^2 y^2).
}
$$
Here, $\RA$ is the minimal overring of $\RP_{\infty \infty}$, the functor $\JJ$ is a fully faithful embedding, $\RT = \kk\llbracket x, y\rrbracket/(x^2 y^2)$ and $\underline{\MF}(x^2 y^2)$ is the homotopy
category of  matrix factorizations of $x^2 y^2$ (which is equivalent to the stable category $\underline{\CM}(\RT)$ by a result
of  Eisenbud \cite{Eisenbud}).
Results of this article provide a partial classification of the indecomposable objects of $\underline{\MF}(x^2 y^2)$ as well as an equivalent category $\underline{\MF}(x^2 y^2 + uv)$.

\medskip
\noindent
\emph{Acknowledgement}. This research was   supported by the DFG
project   Bu--1866/2--1.  We are also thankful  to Lesya Bodnarchuk for supplying us with TikZ pictures illustrating the technique of matrix problems.

\section{Survey  on Cohen--Macaulay modules over curve singularities}

\noindent
In this section we collect definitions and some known facts on Cohen--Macaulay modules over curve singularities. The proofs of the mentioned statements can be found in the monographs \cite{BrunsHerzog,LeuschkeWiegand,Yoshino}, see also the survey article \cite{SurvOnCM}.

\subsection{Definitions and basic properties} Let $(A, \idm)$ be a local Noetherian ring of Krull dimension one (a curve singularity), $\kk = A/\idm$ its residue field and $Q = Q(A)$ its total ring of fractions.

\begin{definition}
A curve singularity $A$  is
\begin{itemize}
\item \emph{Cohen--Macaulay} if and only if $\Hom_{A}(\kk, A) = 0$ (equivalently, $A$ contains a regular element).
\item \emph{Gorenstein} if and only if it is Cohen--Macaulay and $\Ext^1_A(\kk, A) \cong \kk$ (equivalently,
$\mathsf{inj.dim}_A(A) = 1$).
\end{itemize}
\end{definition}

\noindent
Note that a \emph{reduced} curve singularity is automatically Cohen--Macaulay. However, in this article we mainly focus on  non--reduced ones.

\begin{lemma}\label{L:RingofFractions}
Let $A$ be a Cohen--Macaulay curve singularity. Then $Q$ is an Artinian ring. Moreover,  if  $\bigl\{\idp_1, \dots, \idp_t\bigr\}$ is the set of minimal
prime ideals of $A$ then there exists a ring  isomorphism $\gamma\!: Q \lar A_{\idp_1} \times \dots \times A_{\idp_t}$ making the following diagram
$$
\xymatrix{
 & Q \ar[dd]^-{\gamma} \\
A \ar[ru]^-{\can_1} \ar[rd]_-{\can_2} & \\
 &  A_{\idp_1} \times \dots \times A_{\idp_t}
}
$$
commutative,
where $\can_1$ and $\can_2$ are canonical morphisms.
\end{lemma}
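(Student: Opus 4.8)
The plan is to make the multiplicative set of non-zerodivisors of $A$ completely explicit, deduce that $Q$ is a localization of $A$ with only finitely many primes, all of which are maximal, and then split $Q$ using the structure theory of Artinian rings while matching the factors with the localizations $A_{\idp_i}$.

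First I would compute the set of associated primes. Since $A$ is local of Krull dimension one, every prime of $A$ is either a minimal prime of height zero or the maximal ideal $\idm$ of height one, with nothing in between: a prime of positive height contained in $\idm$ would force $\dim A \geq 2$. The Cohen--Macaulay hypothesis in the form $\Hom_A(\kk, A) = 0$ says precisely that $\depth_A(A) \geq 1$, that is $\idm \notin \operatorname{Ass}(A)$. Together with the inclusion $\operatorname{Min}(A) \subseteq \operatorname{Ass}(A)$ valid for any Noetherian ring, this forces $\operatorname{Ass}(A) = \operatorname{Min}(A) = \{\idp_1, \dots, \idp_t\}$. As the set of zerodivisors of a Noetherian ring is the union of its associated primes, the non-zerodivisors form exactly $S = A \setminus (\idp_1 \cup \dots \cup \idp_t)$, so that $Q = S^{-1}A$ by definition of the total ring of fractions.

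Next I would analyze $\Spec(Q)$. Primes of $S^{-1}A$ correspond to primes $\idq$ of $A$ with $\idq \cap S = \emptyset$, equivalently $\idq \subseteq \idp_1 \cup \dots \cup \idp_t$; by prime avoidance $\idq \subseteq \idp_i$ for some $i$, and minimality of $\idp_i$ yields $\idq = \idp_i$. Hence $Q$ has precisely the prime ideals $S^{-1}\idp_1, \dots, S^{-1}\idp_t$, all maximal and pairwise distinct. Being a localization of a Noetherian ring, $Q$ is Noetherian; being Noetherian of Krull dimension zero, it is Artinian, which proves the first claim. To build $\gamma$, I would use the universal property of localization: each canonical map $A \to A_{\idp_i}$ inverts every element of $S$ (since $S \cap \idp_i = \emptyset$), hence factors uniquely as $A \xrightarrow{\can_1} Q \xrightarrow{\gamma_i} A_{\idp_i}$. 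Assembling the $\gamma_i$ gives a ring homomorphism $\gamma = (\gamma_1, \dots, \gamma_t)\colon Q \to A_{\idp_1} \times \dots \times A_{\idp_t}$ with $\gamma \circ \can_1 = \can_2$ by construction, so the triangle commutes. It remains to see that $\gamma$ is bijective: since $Q$ is Artinian it equals, via the natural map, the product of its localizations at its maximal ideals, and the iterated-localization identity $(S^{-1}A)_{S^{-1}\idp_i} \cong A_{\idp_i}$ (valid as $S \subseteq A \setminus \idp_i$) identifies each factor with $A_{\idp_i}$ compatibly with $\gamma_i$, exhibiting $\gamma$ as the structural isomorphism of $Q$.

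The routine inputs here are the dimension-one stratification of $\Spec(A)$, prime avoidance and the iterated-localization formula. I expect the only point requiring care to be the final identification: checking that the abstract decomposition of the Artinian ring $Q$ into a product of local rings is induced by the very maps $\gamma_i$ produced by the universal property, so that the resulting isomorphism is the canonical $\gamma$ and the triangle genuinely commutes, rather than merely yielding an abstract isomorphism $Q \cong \prod_i A_{\idp_i}$.
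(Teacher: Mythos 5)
Your proof is correct and takes essentially the same route as the paper: Cohen--Macaulayness forces $\operatorname{Ass}(A)=\operatorname{Min}(A)=\{\idp_1,\dots,\idp_t\}$, hence $Q$ is Artinian with maximal ideals corresponding to the $\idp_i$, and the product decomposition of an Artinian ring together with the iterated-localization identity $Q_{\idp_i Q}\cong A_{\idp_i}$ yields $\gamma$. The only difference is one of detail: where the paper cites Bourbaki for the Artinian-ness of $Q$ and the identification of its maximal ideals, you prove this directly via prime avoidance and ``Noetherian of dimension zero implies Artinian,'' and you also make explicit the compatibility of the abstract splitting with the maps coming from the universal property, which the paper leaves implicit.
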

\begin{proof}
Since $A$ is Cohen--Macaulay, the associator of $A$ coincides with $\{ \idp_1, \ldots, \idp_n\}$.
By \cite[Chapter IV, Proposition 2.5.10]{Bourbaki} $Q$ is Artinian and its maximal ideals
are $\idp_1 Q, \ldots, \idp_n Q$. Hence, $Q \cong  Q_{\idp_1 Q} \times \dots \times Q_{\idp_n Q}$. Since
  $Q_{\idp_i Q} = A_{\idp_i}$ for  $1 \leq i \leq n$, the result follows.
\end{proof}

\begin{definition} For an $A$--module $M$ we set
$$
\Gamma_{\idm}(M) := \bigl\{ \ x \in M \ \big| \ \idm^t x = 0 \, \mbox{ for some } \, t \in \NN \ \bigr\}.
$$
\end{definition}

\noindent
The following result can be easily deduced  from Lemma \ref{L:RingofFractions}.

\begin{lemma}\label{L:LemmaCM}
Let $A$ be a Cohen-Macaulay curve singularity.
For a Noetherian $A$--module $M$ we have:
$$
\Gamma_{\idm}(M) = \ker\bigl(M \lar Q \otimes_A M\bigr) =: \tor(M).
$$
Moreover, the following  statements  are equivalent:
\begin{itemize}
\item $\Hom_A(\kk, M) = 0$.
\item $M$ is \emph{torsion free}, i.e. $\tor(M) = 0$.
\end{itemize}
\end{lemma}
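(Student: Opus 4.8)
The plan is to prove the two assertions in turn, extracting the content of the first directly from Lemma~\ref{L:RingofFractions} and treating the equivalence by a short socle argument. Throughout I work with $(A,\idm)$ one--dimensional and Cohen--Macaulay, and I recall that $Q = Q(A)$ is the localization of $A$ at the multiplicative set of non--zerodivisors.

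For the equality $\Gamma_{\idm}(M) = \tor(M)$, I would tensor the isomorphism $Q \cong A_{\idp_1} \times \cdots \times A_{\idp_t}$ of Lemma~\ref{L:RingofFractions} with $M$; since the product is finite, this gives $Q \otimes_A M \cong M_{\idp_1} \times \cdots \times M_{\idp_t}$, and the commutative diagram in that lemma identifies the structure map $M \to Q \otimes_A M$ with the localization map $x \mapsto (x/1)_i$. Hence $x \in \tor(M)$ if and only if $x/1 = 0$ in every $M_{\idp_i}$, i.e. if and only if $\Ann(x) \not\subseteq \idp_i$ for each minimal prime $\idp_i$. Now in a one--dimensional local ring the only prime ideals are the $\idp_i$ and the maximal ideal $\idm$, so for $x \neq 0$ the condition $\Ann(x) \not\subseteq \idp_i$ for all $i$ is equivalent to $V(\Ann(x)) = \{\idm\}$, that is $\sqrt{\Ann(x)} = \idm$; by the Noetherian hypothesis this holds precisely when $\idm^t \subseteq \Ann(x)$ for some $t$, i.e. when $\idm^t x = 0$. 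That is exactly the condition $x \in \Gamma_{\idm}(M)$, so the two submodules coincide.

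For the equivalence I would use the standard identification $\Hom_A(\kk, M) = (0 :_M \idm) = \{x \in M : \idm x = 0\}$. If $\tor(M) = 0$, then $(0 :_M \idm) \subseteq \Gamma_{\idm}(M) = 0$ by the first part, so $\Hom_A(\kk, M) = 0$. Conversely, suppose $\Gamma_{\idm}(M) \neq 0$ and choose a nonzero $x$ together with a minimal $t \geq 1$ satisfying $\idm^t x = 0$. Then $\idm^{t-1} x \neq 0$ while $\idm \cdot (\idm^{t-1} x) = 0$, so any nonzero element of $\idm^{t-1} x$ (and $x$ itself when $t = 1$) is annihilated by $\idm$ and defines a nonzero homomorphism $\kk \to M$. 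Taking contrapositives yields $\Hom_A(\kk, M) = 0 \Rightarrow \tor(M) = 0$, completing the equivalence.

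The only place where the hypotheses genuinely enter is the passage from ``killed by a non--zerodivisor'' to ``killed by a power of $\idm$'', i.e. the inclusion $\tor(M) \subseteq \Gamma_{\idm}(M)$. A torsion element is a priori only known to be annihilated by some non--zerodivisor, and promoting this to annihilation by $\idm^t$ requires that a non--zerodivisor generate an $\idm$--primary ideal, which is exactly the one--dimensional Cohen--Macaulay structure distilled in Lemma~\ref{L:RingofFractions}. I expect this to be the main (though modest) obstacle; the remaining steps are formal.
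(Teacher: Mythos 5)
Your proof is correct and takes exactly the route the paper intends: the paper gives no written proof, saying only that the lemma ``can be easily deduced from Lemma \ref{L:RingofFractions}'', and your argument is precisely that deduction --- using the decomposition $Q \cong A_{\idp_1} \times \dots \times A_{\idp_t}$ to identify $\tor(M)$ with $\Gamma_{\idm}(M)$ via annihilators and radicals, followed by a standard socle argument for the equivalence. Nothing to correct.
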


\begin{definition}
A Noetherian module $M$ satisfying  the  conditions of Lemma \ref{L:LemmaCM} is called \emph{maximal Cohen--Macaulay}. In what follows we just say that $M$ is Cohen--Macaulay.
In this case, the  $Q$--module $Q(M)$ is called the \emph{rational envelope} of $M$.  More generally, a Noetherian module $N$ over a Noetherian ring $S$
 (say, of Krull dimension one)  is (maximal) Cohen--Macaulay
if for any maximal ideal $\idn$ in $S$ the localization $N_\idn$ is Cohen--Macaulay. In what follows, $
\CM(S)$ denotes the category of Cohen--Macaulay
 $S$--modules.
\end{definition}

\begin{lemma}\label{L:CMsubmFree}
Assume that a Cohen--Macaulay curve singularity $A$ is Gorenstein in codimension zero (i.e.~$Q$ is self--injective).
Then for a Noetherian $A$--module $M$, the following conditions are equivalent:
\begin{itemize}
\item $M$ is Cohen--Macaulay.
\item $M$ embeds into a finitely generated free $A$--module.
\end{itemize}
\end{lemma}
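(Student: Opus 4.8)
The plan is to prove Lemma~\ref{L:CMsubmFree} by establishing both implications, with the main content lying in showing that a Cohen--Macaulay module embeds into a free module when $Q$ is self--injective.

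\textbf{The easy direction.} First I would dispose of the implication that a submodule of a free module is Cohen--Macaulay. If $M \hookrightarrow A^n$ for some $n$, then since $A$ is a domain in codimension zero — more precisely since $A$ is torsion free over itself (being Cohen--Macaulay, $A$ contains a regular element and $\tor(A) = 0$) — the free module $A^n$ is torsion free, and any submodule of a torsion free module is torsion free. By Lemma~\ref{L:LemmaCM}, torsion freeness is exactly the Cohen--Macaulay condition, so $M$ is Cohen--Macaulay. This direction does not even need the self--injectivity hypothesis.

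\textbf{The hard direction.} Now suppose $M$ is Cohen--Macaulay; I want to embed $M$ into a finitely generated free $A$--module. The natural map $\can\!: M \to Q \otimes_A M =: Q(M)$ is injective precisely because $\ker(\can) = \tor(M) = 0$ by Lemma~\ref{L:LemmaCM}. So $M$ sits inside its rational envelope $Q(M)$, which is a finitely generated module over the Artinian ring $Q$ (Artinian by Lemma~\ref{L:RingofFractions}). The strategy is to produce a $Q$--linear embedding $Q(M) \hookrightarrow Q^n$ and then clear denominators to land inside $A^n$. The self--injectivity of $Q$ is exactly what makes $Q$ an injective cogenerator over itself, so that the finitely generated $Q$--module $Q(M)$ embeds into a finite free $Q$--module $Q^n$: choosing generators $m_1, \dots, m_n$ of $M$ and mapping them appropriately, one builds a $Q$--monomorphism $\varphi\!: Q(M) \to Q^n$. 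Composing with $\can$ gives an injective $A$--linear map $M \to Q^n$.

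\textbf{The main obstacle and its resolution.} The delicate point is that the image of $M$ under $\varphi \circ \can$ lies in $Q^n$, not a priori in $A^n$; I must arrange the embedding to have image in the free $A$--module. Since $M$ is finitely generated, its image is generated by finitely many elements of $Q^n$, each of whose coordinates is a fraction $a/s$ with $s$ a regular element of $A$. Multiplying $\varphi$ by a common regular denominator $s \in A$ — which is legitimate because $s$ acts invertibly on $Q$ and hence $s \cdot \varphi$ remains a $Q$--monomorphism — I can scale so that the finitely many generators of the image have coordinates in $A$. Then $s \cdot (\varphi \circ \can)$ is an $A$--linear map $M \to A^n$ that is still injective, since multiplication by the regular element $s$ is injective on the torsion free module $Q^n$. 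This exhibits $M$ as a submodule of a finitely generated free $A$--module, completing the proof. The self--injectivity hypothesis enters in exactly one place — guaranteeing the $Q$--embedding $Q(M) \hookrightarrow Q^n$ — and without it the rational envelope need not embed into a free $Q$--module, so the conclusion can genuinely fail.
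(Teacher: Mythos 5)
Your proof is correct. Note that the paper does not prove this lemma at all --- it simply cites \cite[Appendix A, Corollary 15]{LeuschkeWiegand} --- and your argument is essentially the standard one found there: the easy direction via heredity of torsion--freeness (Lemma \ref{L:LemmaCM}), and the hard direction by embedding $M$ into its rational envelope $Q(M)$, embedding the finitely generated $Q$--module $Q(M)$ into a finite free $Q$--module using self--injectivity of the Artinian ring $Q$, and then clearing denominators by a regular element. The only thin spot is the phrase ``mapping them appropriately'': the existence of the embedding $Q(M)\hookrightarrow Q^n$ is not built by hand from generators, but follows because each local factor of the Artinian self--injective ring $Q$ has simple socle, so the injective hull of a finite--length module is a finite direct sum of copies of $Q$; this is standard (quasi--Frobenius theory), so it is a matter of exposition rather than a gap.
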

\noindent
A proof of this Lemma can be found in \cite[Appendix A, Corollary 15]{LeuschkeWiegand}.
\begin{remark}
The statement of Lemma \ref{L:CMsubmFree} is not true for an arbitrary  Cohen--Macaulay curve singularity.
For example, let $A = \kk\llbracket x, y, z\rrbracket/(x^2, xy, y^2)$ and $K$ be a canonical $A$--module.
Then $K$ does not embed into a free $A$--module.
\end{remark}

\begin{definition}
A ring $R$ is an \emph{overring} of $A$ if $A \subseteq R \subset Q$ and the ring extension $A \subseteq R$ is finite.
We also say that $R$ \emph{birationally dominates} $A$.
\end{definition}

\begin{proposition}\label{P:overrings}
Let $A$ be a Cohen--Macaulay curve singularity and $R$ an overring of $A$. Then the following results are true.
\begin{itemize}
\item $R$ is Cohen--Macaulay.
\item We have an adjoint pair $\bigl(R \boxtimes_A \,-\,, \II(\,-\,)\bigr)$, where $\II\!: \CM(R) \lar \CM(A)$ is the restriction (or forgetful) functor and
$R \boxtimes_A \,-\, \!: \CM(A) \lar \CM(R)$ sends a Cohen--Macaulay module $M$ to $R \otimes_A M/\tor(R \otimes_A M)$.
\item $\II$ is fully faithful.
\item If $M = \bigl\langle w_1, \dots, w_t\bigr\rangle_A \subset Q^n$, then $R \boxtimes_A M \cong R \cdot M := \bigl\langle w_1, \dots, w_t\bigr\rangle_R \subset Q^n$.
\end{itemize}
\end{proposition}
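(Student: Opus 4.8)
The plan is to establish the four bullet points of Proposition~\ref{P:overrings} in sequence, relying on the fact that $A \subseteq R \subset Q$ is a finite birational extension and on the characterizations of Cohen--Macaulay modules already available in Lemmas~\ref{L:RingofFractions} and~\ref{L:LemmaCM}.

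First I would verify that $R$ is Cohen--Macaulay. Since $R$ is a finite $A$--module and $A$ is Noetherian, $R$ is Noetherian; since $A \subseteq R \subset Q$, the total ring of fractions of $R$ coincides with $Q$. A regular element of $A$ remains regular in $R$ (it is not a zero--divisor in $Q$), so $R$ contains a regular element and is one--dimensional, hence a Cohen--Macaulay curve singularity in the sense of the paper. For the adjunction, the key point is to check that the assignment $M \mapsto R \boxtimes_A M := (R \otimes_A M)/\tor(R \otimes_A M)$ genuinely lands in $\CM(R)$: the quotient by its torsion submodule is torsion--free over $R$ by construction, and applying Lemma~\ref{L:LemmaCM} over $R$ identifies this with being Cohen--Macaulay. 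The adjunction isomorphism
\[
\Hom_R\bigl(R \boxtimes_A M,\, N\bigr) \;\cong\; \Hom_A\bigl(M,\, \II(N)\bigr)
\]
for $N \in \CM(R)$ I would obtain by composing the usual tensor--hom adjunction $\Hom_R(R \otimes_A M, N) \cong \Hom_A(M, N)$ with the observation that any $R$--homomorphism out of $R \otimes_A M$ annihilates the torsion submodule, because $N$ is torsion--free over $R$; hence it factors uniquely through $R \boxtimes_A M$.

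Next, full faithfulness of $\II$: because $A \subseteq R \subset Q$ and every object of $\CM(R)$ is torsion--free, an $R$--module map and the underlying $A$--module map have the same underlying set--theoretic maps, and conversely any $A$--linear map between $R$--modules that are torsion--free over $R$ is automatically $R$--linear. The cleanest way to see the latter is to extend scalars to $Q$: for $M, N \in \CM(R)$ the natural maps $M \hookrightarrow Q \otimes_A M$ and $N \hookrightarrow Q \otimes_A N$ are injective by Lemma~\ref{L:LemmaCM}, an $A$--linear $f\colon M \to N$ induces a $Q$--linear map $Q \otimes_A f$ which is automatically $R$--linear (as $R \subset Q$), and $R$--linearity of $f$ follows by restriction along the injection $N \hookrightarrow Q \otimes_A N$. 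The final bullet is then a concrete computation: if $M = \langle w_1, \dots, w_t \rangle_A \subset Q^n$, the image of $R \otimes_A M$ in $Q \otimes_A M \cong Q^n$ is the $R$--submodule $\langle w_1, \dots, w_t\rangle_R$, and since this image is already torsion--free over $R$, the canonical surjection from $R \otimes_A M$ kills exactly $\tor(R \otimes_A M)$, identifying $R \boxtimes_A M$ with $R \cdot M$.

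The main obstacle I expect is the careful handling of the torsion submodule $\tor(R \otimes_A M)$ when passing between the tensor product and its image in $Q^n$: one must confirm that the kernel of $R \otimes_A M \to Q \otimes_A M$ is precisely $\tor(R \otimes_A M)$ as computed over $R$ (not merely over $A$), which uses that $Q$ is also the total ring of fractions of $R$ and that $Q \otimes_R (R \otimes_A M) \cong Q \otimes_A M$. Once this identification is pinned down, the adjunction and the explicit description of $R \boxtimes_A M$ follow formally; the remaining verifications are routine applications of Lemma~\ref{L:LemmaCM} and the tensor--hom adjunction.
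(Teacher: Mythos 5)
Your proposal is correct and follows essentially the same route as the paper's (much terser) proof: the adjunction is obtained exactly as in the paper by composing tensor--hom adjunction with the factorization of maps into torsion--free modules through the torsion quotient, and the last bullet by identifying the kernel of $R \otimes_A M \to R\cdot M$ with $\tor(R \otimes_A M)$ — including the key point, which you rightly flag, that torsion over $A$ and over $R$ agree because $Q$ is also the total ring of fractions of $R$. The only difference is cosmetic: for Cohen--Macaulayness of $R$ the paper invokes the depth equality $\ddepth_A(R) = 1 = \ddepth_R(R)$ where you argue with regular elements, and for full faithfulness of $\II$ the paper cites \cite[Lemma 4.14]{LeuschkeWiegand} where you give the direct argument (extending scalars to $Q$ to see that $A$--linear maps between torsion--free $R$--modules are automatically $R$--linear), which is the same argument as in the cited reference.
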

\begin{proof}
The first statement follows from the fact that $\depth_A(R) = 1 = \depth_R(R)$.
The second result follows from the functorial isomorphisms
$$\Hom_R\bigl(R \boxtimes_A M ,N\bigr) \cong \Hom_R\bigl(R \otimes_A M, N\bigr) \cong \Hom_A\bigl(M,\II(N)\bigr).
$$
For a proof of the third statement, see for example \cite[Lemma 4.14]{LeuschkeWiegand}. The fourth result follows from the fact that  the kernel of the natural morphism
$R \otimes_A M \longrightarrow R\cdot M$ is  $\tor(R \otimes_A M)$.
\end{proof}

\begin{corollary}
Let $A$ be a Cohen--Macaulay curve singularity and $R$ be an overring of $A$. Then the following statements are true.
\begin{itemize}
\item Let $N_1$ and $N_2$ be Cohen--Macaulay $R$--modules. Then $N_1 \cong N_2$ if and only if $\II(N_1) \cong \II(N_2)$ in
$\CM(A)$.
\item A Cohen--Macaulay $R$--module $N$ is indecomposable if and only if $N$  is indecomposable viewed
as an $A$--module.
\end{itemize}
\end{corollary}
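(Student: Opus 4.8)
The plan is to deduce both statements directly from the third part of Proposition \ref{P:overrings}, namely that the forgetful functor $\II\colon \CM(R) \lar \CM(A)$ is fully faithful. Since restriction of scalars commutes with finite direct sums, $\II$ is moreover additive, so we are dealing with a fully faithful additive functor between two additive categories, and both assertions become formal consequences of this single fact. I would first record the two categorical principles I intend to invoke: a fully faithful functor reflects isomorphisms, and a fully faithful additive functor between idempotent complete additive categories reflects and preserves indecomposability.

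For the first bullet, the ``only if'' direction is immediate, since $\II$ is a functor. For the ``if'' direction, suppose $\II(N_1) \cong \II(N_2)$ via mutually inverse morphisms $f\colon \II(N_1) \lar \II(N_2)$ and $g\colon \II(N_2) \lar \II(N_1)$. Because $\II$ is full, there exist $\tilde f\colon N_1 \lar N_2$ and $\tilde g\colon N_2 \lar N_1$ with $\II(\tilde f) = f$ and $\II(\tilde g) = g$. Then $\II(\tilde g \circ \tilde f) = g \circ f = \id_{\II(N_1)} = \II(\id_{N_1})$, and faithfulness of $\II$ forces $\tilde g \circ \tilde f = \id_{N_1}$; symmetrically $\tilde f \circ \tilde g = \id_{N_2}$. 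Hence $N_1 \cong N_2$ in $\CM(R)$.

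For the second bullet, full faithfulness of $\II$ yields a ring isomorphism $\End_R(N) \cong \End_A\bigl(\II(N)\bigr)$, since $\II$ respects composition and identities; under this isomorphism idempotents correspond to idempotents. It then remains to connect idempotents with direct--sum decompositions: an object of $\CM(R)$ (respectively $\CM(A)$) is indecomposable exactly when its endomorphism ring contains no idempotents other than $0$ and the identity, \emph{provided} the ambient category is idempotent complete. The one genuinely non--formal point I would verify is precisely this idempotent completeness: a direct summand of a Cohen--Macaulay module is again Cohen--Macaulay, because a submodule of a torsion--free module is torsion--free (so $\tor$ of the image of any idempotent endomorphism vanishes), whence the image of an idempotent again lies in $\CM$. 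Granting this, idempotents of $\End_R(N)$ split to yield decompositions of $N$, idempotents of $\End_A\bigl(\II(N)\bigr)$ split to yield decompositions of $\II(N)$, and the ring isomorphism above matches them up. Therefore $N$ is indecomposable over $R$ if and only if $\II(N)$ is indecomposable over $A$. The main obstacle is thus merely the idempotent--completeness of $\CM$, which is the mild verification just described; everything else is a purely diagrammatic consequence of Proposition \ref{P:overrings}.
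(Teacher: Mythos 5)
Your proposal is correct and takes essentially the same route as the paper, which states this corollary without proof as an immediate consequence of the full faithfulness of $\II$ established in Proposition \ref{P:overrings}: reflection of isomorphisms and the ring isomorphism $\End_R(N) \cong \End_A\bigl(\II(N)\bigr)$ are exactly the intended mechanism. Your one non--formal verification --- that direct summands of Cohen--Macaulay modules are again Cohen--Macaulay, so idempotents split inside $\CM(R)$ and $\CM(A)$ --- is the right check to make the indecomposability statement rigorous, and it holds here since torsion--freeness passes to submodules.
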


\noindent
The following result is due to  Bass \cite[Proposition 7.2]{Bass}, see also
\cite[Lemma 4.9]{LeuschkeWiegand}.

\begin{theorem}\label{T:RejectionLemma}
Let $(A, \idm)$ be a Gorenstein curve singularity and let
$R = \End_A(\idm)$. Then the following results are true.
\begin{itemize}
\item $R \cong  \bigl\{r \in Q \ \big|\, r \, \idm \subseteq \idm \bigr\}$. In particular, $R$ is an overring of $A$.
\item If $A$ is not regular, we have an exact sequence of $A$--modules
$$
0 \lar A \stackrel{\imath}\lar R \lar \kk \lar 0,
$$
where $\imath$ is the canonical inclusion. This short exact sequence defines  a generator of the
$A$--module $\Ext^1_A(\kk, A) \cong \kk$.
\item In the latter case, let $S$ be any other proper overring of $A$. Then $S$ contains $R$. In other words,
$R$ is the \emph{minimal overring} of the curve singularity $A$.
\item Let $M$ be a Cohen--Macaulay $A$--module without free direct summands. Then there exists
a Cohen--Macaulay $R$--module $N$ such that $M = \II(N)$.
\end{itemize}
\end{theorem}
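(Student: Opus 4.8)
The plan is to establish the four assertions in turn, working throughout with the fractional-ideal description of $A$-submodules of $Q = Q(A)$. First I would show that every $\varphi \in \End_A(\idm)$ is multiplication by an element of $Q$: since $A$ is Cohen--Macaulay, $\idm$ contains a regular element $x$, and setting $r := \varphi(x)/x \in Q$, the identity $x\varphi(y) = \varphi(xy) = y\varphi(x)$ together with regularity of $x$ forces $\varphi(y) = ry$ for every $y \in \idm$. This identifies $R = \End_A(\idm)$ with $\{r \in Q \mid r\,\idm \subseteq \idm\}$, so that $A \subseteq R \subset Q$. Finiteness is automatic: as $\idm$ is finitely generated, a surjection $A^n \twoheadrightarrow \idm$ embeds $R = \Hom_A(\idm,\idm)$ into $\idm^{\,n}$, whence $R$ is a finite $A$-module and thus an overring.

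For the second assertion I would first upgrade the description of $R$. The colon module $\idm^{-1} := \{r \in Q \mid r\,\idm \subseteq A\}$ always contains $R$; conversely, if some $r \in \idm^{-1}$ satisfied $r\,\idm \not\subseteq \idm$, then $r\,\idm$ would be an ideal not contained in $\idm$, hence $r\,\idm = A$, so $\idm$ would be invertible, hence principal, forcing $A$ to be regular. As $A$ is assumed non-regular, $R = \idm^{-1} = \Hom_A(\idm, A)$. Applying $\Hom_A(-,A)$ to $0 \to \idm \to A \to \kk \to 0$ and using $\Hom_A(\kk,A) = 0$ (torsion-freeness), $\Ext^1_A(A,A) = 0$, and the Gorenstein hypothesis $\Ext^1_A(\kk,A) \cong \kk$, the long exact sequence collapses to $0 \to A \to R \to \kk \to 0$ with the left map the canonical inclusion $\imath$. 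This cannot split, since $R$ is torsion-free (being an overring) whereas $A \oplus \kk$ is not; hence it represents a nonzero, i.e.\ generating, class of $\Ext^1_A(\kk,A) \cong \kk$.

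For minimality I would exploit Gorenstein duality at the level of fractional ideals: over a one-dimensional Gorenstein local ring the operator $(A:-)$ is an order-reversing involution on maximal Cohen--Macaulay (equivalently reflexive) fractional ideals. Any proper overring $S$ is a ring finite over $A$, hence maximal Cohen--Macaulay and therefore reflexive, so $(A:(A:S)) = S$; moreover its conductor $(A:S)$ is a proper ideal of $A$ (otherwise $S \subseteq A$) and thus contained in $\idm$. Since $\idm$ is reflexive we have $(A:R) = (A:\idm^{-1}) = \idm$, so from $(A:S) \subseteq \idm = (A:R)$ the order-reversing operator yields $S = (A:(A:S)) \supseteq (A:(A:R)) = (A:\idm) = R$. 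Hence every proper overring contains $R$.

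The last assertion is the crux. I would characterize the absence of free summands through the \emph{trace ideal} $\tau(M) := \sum_{f \in \Hom_A(M,A)} f(M)$: over the local ring $A$ a finitely generated module $M$ has a free direct summand precisely when $\tau(M) = A$, so the hypothesis gives $\tau(M) \subseteq \idm$. Consequently every $A$-linear map $M \to A$ has image in $\idm$, i.e.\ $\Hom_A(M,A) = \Hom_A(M,\idm)$. Then $R = \End_A(\idm)$ acts on $\Hom_A(M,\idm)$ by post-composition, making $M^{*} := \Hom_A(M,A)$ an $R$-module, and dualizing again $\Hom_A(M^{*},A)$ inherits an $R$-action via $(r\psi)(f) := \psi(r \circ f)$. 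Since $M$ is maximal Cohen--Macaulay over the Gorenstein ring $A$ it is reflexive, so the canonical map $M \to \Hom_A(M^{*},A)$ is an $A$-isomorphism; transporting the $R$-structure back produces an $R$-module $N$ with $\II(N) = M$, and $N$ is maximal Cohen--Macaulay over $R$ because this is detected on the underlying $A$-module. I expect this final step to be the main obstacle: one must verify that the constructed $R$-action genuinely restricts to the given $A$-action (which uses that $A \hookrightarrow R$ acts on $\idm$ by scalar multiplication) and that the biduality isomorphism is compatible with it, so that $\II(N)$ equals $M$ on the nose and not merely up to abstract isomorphism.
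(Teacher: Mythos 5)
Your proof is correct. Note first that the paper itself contains no argument for this theorem: it is quoted directly from Bass \cite{Bass} (Proposition 7.2), with \cite{LeuschkeWiegand} (Lemma 4.9) as a secondary reference, so there is no in-paper proof to compare against; what you have written is a self-contained reconstruction of the classical argument. Your treatment of the first two items (every endomorphism of $\idm$ is multiplication by an element of $Q$; $R = \idm^{-1} = \Hom_A(\idm,A)$ when $A$ is not regular, since otherwise $\idm$ would be invertible, hence principal, hence $A$ regular; then the long exact sequence of $\Hom_A(-,A)$ applied to $0 \to \idm \to A \to \kk \to 0$, with non-splitness detected by torsion) is exactly the standard route. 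For minimality you use the order-reversing duality $(A:-)$ on reflexive fractional ideals over the one-dimensional Gorenstein ring, and for the last item the trace-ideal characterization of free direct summands together with transport of the $R$-structure across biduality; both are the classical arguments (the latter is essentially Bass's). The step you flag as the ``main obstacle'' is in fact routine: the biduality map $\delta\colon M \to \Hom_A(M^{*},A)$, $\delta(m)(f) = f(m)$, is $A$-linear and bijective since $M$ is reflexive, and the $R$-action $(r\psi)(f) = \psi(rf)$ on $\Hom_A(M^{*},A)$ restricts to the given $A$-action because each $\psi$ is $A$-linear, so $(a\psi)(f) = \psi(af) = a\psi(f)$; transporting along $\delta^{-1}$ therefore puts an $R$-module structure on the set $M$ itself whose restriction to $A$ is the original one, giving $M = \II(N)$ on the nose, and $N$ is Cohen--Macaulay over $R$ because every maximal ideal of $R$ contracts to $\idm$ and $M$ is torsion-free over $A$. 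Two tiny points worth recording if you write this up: the identification $\Hom_A(I,A) \cong (A:I)$ used implicitly in the third item needs the same ``multiplication by $\varphi(x)/x$'' computation as in the first item (valid because each fractional ideal in play contains a unit of $Q$), and in that item one can avoid invoking reflexivity of $\idm$ altogether, since $\idm \subseteq (A:R) \subsetneq A$ gives $(A:R) = \idm$ directly.
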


\begin{remark}\label{R:RejectionLemma} Theorem \ref{T:RejectionLemma} gives a precise measure of the representation theoretic difference between the
categories $\CM(A)$ and $\CM(R)$. Namely, an indecomposable Cohen--Macaulay $A$--module $M$ is either regular or is the restriction of an indecomposable Cohen--Macaulay $R$--module. In more concrete terms, assume
that  $M = \bigl\langle w_1, \dots, w_t \bigr\rangle_A \subset Q^n$ contains no free direct summands (according
to Lemma \ref{L:CMsubmFree}, any Cohen--Macaulay $A$--module admits such embedding).
Then $M = \bigl\langle w_1, \dots, w_t \bigr\rangle_R$.
\end{remark}

\begin{proposition}\label{P:genrestrict}
In the situation of Theorem \ref{T:RejectionLemma}, assume that $N = \bigl\langle w_1, \dots, w_t\bigr\rangle_R \subset Q^n$
is an indecomposable Cohen--Macaulay $R$--module. Then either $N \cong R$  or $N = \bigl\langle w_1, \dots, w_t\bigr\rangle_A$.
\end{proposition}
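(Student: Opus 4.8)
The plan is to reduce the statement to the Rejection Lemma (Remark \ref{R:RejectionLemma}) by exploiting the additivity of the minimal--overring functor $R \boxtimes_A -$. Set $M := \bigl\langle w_1, \dots, w_t\bigr\rangle_A \subseteq Q^n$, the $A$--submodule generated by the chosen generators of $N$. First I would check that $M$ is Cohen--Macaulay: since $M$ embeds into $Q^n$, the composite $M \to Q \otimes_A M \to Q \otimes_A Q^n \cong Q^n$ is the inclusion and hence injective, so $\tor(M) = \ker\bigl(M \to Q \otimes_A M\bigr) = 0$ by Lemma \ref{L:LemmaCM}. By the last bullet of Proposition \ref{P:overrings} we then obtain $R \boxtimes_A M = R \cdot M = \bigl\langle w_1, \dots, w_t\bigr\rangle_R = N$, so that $N$ is precisely the image of $M$ under $R \boxtimes_A -$.

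The argument now splits according to whether $M$ admits a free direct summand. If $M$ has no free direct summand, then Remark \ref{R:RejectionLemma} applies verbatim and yields $M = \bigl\langle w_1, \dots, w_t\bigr\rangle_R = N$, which is the second alternative. If instead $M$ has a free direct summand, write $M \cong A \oplus M''$ in $\CM(A)$. Because $R \otimes_A -$ preserves direct sums and $\tor(\,-\,)$ is additive, the functor $R \boxtimes_A -$ is additive; combined with $R \boxtimes_A A \cong R$ (as $R \subseteq Q$ is torsion free) this produces a decomposition $N \cong R \oplus \bigl(R \boxtimes_A M''\bigr)$ in $\CM(R)$. Since $N$ is indecomposable and $R \neq 0$ is a direct summand, the complement must vanish, forcing $N \cong R$. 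This is the first alternative, and the two cases are mutually exhaustive.

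The steps are all routine once the dichotomy is isolated; the only point requiring genuine care is the verification that $M$ is Cohen--Macaulay (equivalently, torsion free), since this is exactly what makes both the identity $R \boxtimes_A M = N$ and the hypothesis of Remark \ref{R:RejectionLemma} available. I expect the main obstacle to be conceptual rather than computational: one must recognise that the abstract Rejection Lemma, phrased for $A$--modules \emph{without} free summands, dovetails with the additivity of $R \boxtimes_A -$ to handle the complementary case, whereupon indecomposability of $N$ immediately collapses the decomposition $R \oplus (R \boxtimes_A M'')$ onto $N \cong R$.
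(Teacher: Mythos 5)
Your proof is correct and follows essentially the same route as the paper: set $M = \bigl\langle w_1,\dots,w_t\bigr\rangle_A$, observe $N = R\cdot M = R \boxtimes_A M$, and split into cases according to whether $M$ has a free direct summand — additivity of $R \boxtimes_A -$ forcing $N \cong R$ in one case, and Theorem \ref{T:RejectionLemma} together with Remark \ref{R:RejectionLemma} giving $N = M$ in the other. The only difference is that you make explicit two points the paper leaves implicit (that $M \subseteq Q^n$ is torsion free, hence Cohen--Macaulay, and that $R \boxtimes_A -$ is additive with $R \boxtimes_A A \cong R$), which is a matter of added rigor rather than a different argument.
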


\begin{proof}
Pose $M := \bigl\langle w_1, \dots, w_t\bigr\rangle_A$. Obviously, we have:
$N = R \cdot M$. If $M$ contains a free direct summand, i.e.~$M \cong M' \oplus A^m$ then
$N = R \cdot M \cong R\cdot M' \oplus R^m$. As $N$ is assumed to be indecomposable, $N \cong R$.
If $N \not\cong R$, then $M$ has no free direct summands. Hence, by Theorem \ref{T:RejectionLemma} and Remark
\ref{R:RejectionLemma} we have:
$R\cdot M = M$.
\end{proof}

\begin{definition}
A Cohen--Macaulay $A$--module $M$ is \emph{locally free on the punctured spectrum of} $A$ if for any minimal prime ideal $\idp$ in
$A$ the localization $M_\idp$ is free over $A_\idp$.
\end{definition}

\begin{remark}\label{R:locfree}
According to Lemma \ref{L:RingofFractions}, a  Cohen--Macaulay $A$--module $M$ is locally free on the punctured spectrum if and only if its rational envelope $Q(M)$ is projective over $Q$.
\end{remark}

\noindent
In what follows, $\CM^{\mathsf{lf}}(A)$ denotes the category
of Cohen--Macaulay $A$--modules which are locally free on the punctured spectrum.

\subsection{Category of triples}

Let $(\rA, \idm)$ be a Cohen--Macaulay  curve singularity, $\rR$ an overring of $\rA$ and
$I = \Ann_\rA(\rR/\rA)$ the corresponding   \emph{conductor ideal}. The next result is straightforward, see for example \cite[Lemma 12.1]{BDNonIsol}.

\begin{lemma} The following statements are true.
\begin{itemize}
\item $I = I \rA = I\rR$. In other words, $I$ is an ideal both in $\rA$ and in $\rR$. Moreover, $I$ is the biggest ideal having this property.
\item The rings $\bar{\rA} = \rA/I$ and $\bar{\rR} = \rR/I$ are Artinian.
\end{itemize}
\end{lemma}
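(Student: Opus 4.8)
The plan is to verify the two bullet points directly from the definition of the conductor ideal $I = \Ann_\rA(\rR/\rA)$, keeping in mind that $\rR$ is an overring of $\rA$, so that $\rA \subseteq \rR \subset Q$ with $\rA \subseteq \rR$ a finite ring extension.

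For the first statement, I would argue that $I$ is simultaneously an ideal of $\rA$ and of $\rR$, and that it is the largest such. Since $I$ annihilates the $\rA$--module $\rR/\rA$, by definition $I \rR \subseteq \rA$; combined with the obvious inclusion $I = I \rA \subseteq I\rR$ (as $1 \in \rA \subseteq \rR$), the key point is to show $I \rR \subseteq I$. For this I would take any $r \in \rR$ and $a \in I$ and check that $ar$ again annihilates $\rR/\rA$: for every $s \in \rR$ we have $(ar)s = a(rs)$, and since $rs \in \rR$ (as $\rR$ is a ring) and $a \in I = \Ann_\rA(\rR/\rA)$, we get $a(rs) \in \rA$, so $ar \in \Ann_\rA(\rR/\rA) = I$. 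Hence $I\rR \subseteq I$, and together with $I \subseteq I\rR$ this gives $I = I\rR$; in particular $I$ is an ideal of $\rR$, and being contained in $\rA$ it is also an ideal of $\rA$, so $I = I\rA$. For the maximality assertion, I would let $J$ be any ideal of $\rR$ with $J \subseteq \rA$; then for every $r \in \rR$ we have $Jr \subseteq J \subseteq \rA$, which says precisely that each element of $J$ annihilates $\rR/\rA$, i.e.~$J \subseteq \Ann_\rA(\rR/\rA) = I$. Thus $I$ is the biggest ideal shared by $\rA$ and $\rR$.

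For the second statement, I would show that $\bar{\rA} = \rA/I$ and $\bar{\rR} = \rR/I$ are Artinian by proving they are one--dimensional local (respectively semilocal) rings modulo a nonzero ideal, hence zero--dimensional Noetherian. The cleanest route is to observe that $I$ contains a regular (nonzerodivisor) element of $\rA$. Indeed, since $\rA \subseteq \rR$ is finite and both rings have the same total ring of fractions $Q$, the module $\rR/\rA$ is a finitely generated torsion $\rA$--module, so its annihilator $I$ contains a nonzerodivisor $c$ of $\rA$. Then $\dim(\rA/I) \le \dim(\rA/cA) = 0$ because passing to the quotient by a regular element of the one--dimensional Cohen--Macaulay ring $\rA$ drops the dimension to zero; since $\rA/I$ is moreover Noetherian, it is Artinian. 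The same argument applies to $\bar{\rR} = \rR/I$: here $\rR$ is itself a Cohen--Macaulay curve singularity (it is an overring of $\rA$, hence one--dimensional and finite over $\rA$), and $c \in I$ is a nonzerodivisor of $\rR$ as well, so $\rR/I$ is a Noetherian zero--dimensional ring, i.e.~Artinian.

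The main obstacle, and the only point requiring genuine care, is the interplay in the first bullet between the two ring structures: one must check the containment $I\rR \subseteq I$ rather than only the a priori weaker $I\rR \subseteq \rA$, and this is exactly where the multiplicativity of $\rR$ is used. Everything else is routine: the finiteness of $\rA \subseteq \rR$ guarantees that $\rR/\rA$ is a finitely generated torsion module with regular annihilator, and the Cohen--Macaulay (hence equidimensional, depth one) property of both $\rA$ and $\rR$ then forces the quotients by $I$ to be Artinian.
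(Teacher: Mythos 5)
Your proof is correct. The paper itself offers no argument for this lemma (it is labelled straightforward, with a pointer to \cite[Lemma 12.1]{BDNonIsol}), and what you wrote is exactly the routine verification being alluded to: the conductor computation $I\rR \subseteq I$ together with the maximality check for the first bullet, and the observation that $I$ contains a nonzerodivisor (because $\rR/\rA$ is a finitely generated torsion $\rA$--module), which forces both quotients $\rA/I$ and $\rR/I$ to be zero--dimensional Noetherian, hence Artinian, for the second.
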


\noindent
For a  Cohen--Macaulay $A$--module $M$  we denote
\begin{itemize}
\item $\tilde{M} := \rR \boxtimes_\rA M \in \CM(\rR)$.
\item $\bar{M} := \bar{\rA} \otimes_{{\rA}} M \in \mod(\bar{\rA})$.
\item $\check{M} := \bar{\rR} \otimes_{\rR} \tilde{M} \in \mod(\bar{\rR})$.
\end{itemize}

\noindent
Then the following result is true, see \cite[Lemma 12.2]{BDNonIsol}.
\begin{lemma}
The canonical map $\theta_M \!: \bar{\rR} \otimes_{\bar{\rA}} \bar{M} \lar \check{M}$ is surjective and its
adjoint map $\tilde{\theta}_M \!: \bar{M} \lar \check{M}$ is injective.
\end{lemma}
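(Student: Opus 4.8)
The plan is to realize the three modules $\bar{M}$, $\tilde{M}$, $\check{M}$ as explicit submodules and subquotients inside the rational envelope $Q(M) = Q \otimes_\rA M$, and then to reduce both assertions to the single algebraic identity $I\tilde{M} = IM$. First I would fix the relevant embeddings. Since $M$ is Cohen--Macaulay it is torsion free, so by Lemma \ref{L:LemmaCM} the canonical map $M \lar Q(M)$ is injective and I identify $M$ with its image. Applying Lemma \ref{L:LemmaCM} over the ring $\rR$ to $\rR \otimes_\rA M$, together with $Q \otimes_\rR (\rR \otimes_\rA M) = Q \otimes_\rA M = Q(M)$, gives $\tor(\rR \otimes_\rA M) = \ker\bigl(\rR \otimes_\rA M \lar Q(M)\bigr)$; hence $\tilde{M} = \rR \boxtimes_\rA M$ embeds into $Q(M)$ with image the $\rR$--submodule $\rR \cdot M$ generated by $M$ (compare the fourth item of Proposition \ref{P:overrings}). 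In particular the canonical map $M \lar \tilde{M}$ is injective and yields the chain $M \subseteq \tilde{M} \subseteq Q(M)$. Consequently $\bar{M} = M/IM$, $\check{M} = \tilde{M}/I\tilde{M}$, the map $\theta_M$ sends $\bar{r} \otimes \bar{m}$ to the class of $rm \in \tilde{M}$, and $\tilde{\theta}_M$ is just the map $M/IM \lar \tilde{M}/I\tilde{M}$ induced by the inclusion $M \hookrightarrow \tilde{M}$ (its value on $\bar{m}$ being $\theta_M(1 \otimes \bar{m})$, i.e.\ the class of $m$).

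For surjectivity of $\theta_M$: since $\tilde{M} = \rR \cdot M$, the module $\tilde{M}$ is generated over $\rR$ by the image of $M$, so $\check{M} = \tilde{M}/I\tilde{M}$ is generated over $\bar{\rR}$ by the image of $M$, that is, by $\tilde{\theta}_M(\bar{M})$. As the image of $\theta_M$ is exactly the $\bar{\rR}$--submodule of $\check{M}$ generated by $\tilde{\theta}_M(\bar{M})$, the map $\theta_M$ is surjective. Equivalently, $\theta_M$ is obtained from the canonical surjection $\rR \otimes_\rA M \twoheadrightarrow \tilde{M}$ by applying the right exact functor $\bar{\rR} \otimes_\rR (-)$.

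The main point is injectivity of $\tilde{\theta}_M$. The kernel of $M/IM \lar \tilde{M}/I\tilde{M}$ is $(M \cap I\tilde{M})/IM$, so it suffices to prove $M \cap I\tilde{M} = IM$. The key computation is $I\tilde{M} = IM$, carried out inside $Q(M)$: the inclusion $I\tilde{M} \subseteq IM$ holds because every product $a(rm)$ with $a \in I$, $r \in \rR$, $m \in M$ equals $(ar)m$ with $ar \in I\rR = I$ (using $I\rR = I$ from the preceding lemma), while the reverse inclusion $IM \subseteq I\tilde{M}$ is clear from $M \subseteq \tilde{M}$. Since $IM \subseteq M$, this gives $M \cap I\tilde{M} = M \cap IM = IM$, so the kernel vanishes and $\tilde{\theta}_M$ is injective.

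I expect the only genuinely delicate step to be the first one: correctly identifying $\tilde{M}$ with $\rR \cdot M$ inside $Q(M)$ and verifying that $M \lar \tilde{M}$ is injective, since this is precisely what licenses all subsequent computations to take place inside a single ambient module. Once this is in place, both claims reduce to the elementary identity $I\tilde{M} = IM$.
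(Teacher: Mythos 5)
Your proof is correct. Note that the paper itself does not prove this lemma: it is quoted from \cite{BDNonIsol} (Lemma 12.2), so there is no in-paper argument to compare against, and your write-up supplies exactly the kind of direct argument the citation stands in for. All three steps check out: the identification $\tilde{M} \cong \rR \cdot M \subseteq Q(M)$ is the torsion-free-quotient description already used in the proof of Proposition \ref{P:overrings}; surjectivity of $\theta_M$ is right-exactness of $\bar{\rR} \otimes_{\rR} (-)$ applied to the canonical surjection $\rR \otimes_{\rA} M \twoheadrightarrow \tilde{M}$, after the identification $\bar{\rR} \otimes_{\rR} (\rR \otimes_{\rA} M) \cong \bar{\rR} \otimes_{\bar{\rA}} \bar{M}$; and injectivity of $\tilde{\theta}_M$ reduces, as you say, to the conductor computation $I\tilde{M} = I(\rR \cdot M) = (I\rR)M = IM$, which uses precisely the identity $I\rR = I$ from the preceding lemma, whence $M \cap I\tilde{M} = IM$ and the kernel $(M \cap I\tilde{M})/IM$ vanishes. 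One point you rely on but should state explicitly: when you invoke the torsion formalism over $\rR$, you need that the total ring of fractions of $\rR$ is again $Q$, so that $Q \otimes_{\rR}(\rR \otimes_{\rA} M) = Q(M)$ really computes $\rR$-torsion. This is standard for overrings (from $\rA \subseteq \rR \subseteq Q$ and the fact that $Q$ is Artinian one gets $Q(\rR) = Q$), and the paper uses it tacitly in Section 3, but since you yourself single out this identification as the delicate step, it deserves a sentence of justification rather than an implicit appeal to Lemma \ref{L:LemmaCM}, which is stated only for the local ring $\rA$.
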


\begin{definition}\label{D:triples-on-curves}
Consider the following
\emph{category of triples} $\Tri(\rA)$. Its objects
are  triples $(\mM, V, \theta)$, where
\begin{itemize}
\item $\mM$
is a maximal Cohen--Macaulay $\rR$--module,
\item $V$ is a Noetherian
$\bar\rA$--module,
\item
$\theta\!:  \bar\rR \otimes_{\bar\rA} V \to
\bar\rR \otimes_\rR \mM$ is an epimorphism
of $\bar\rR$--modules such that the adjoint morphism
of $\bar\rA$--modules
$\tilde\theta \!: V \to  \bar\rR \otimes_{\bar\rA} V
\stackrel{\theta}\lar \bar\rR \otimes_\rR \mM $
is a monomorphism.
\end{itemize}
A morphism between two triples $(\mM, V, \theta)$
and $(\mM', V', \theta')$ is given by a pair $(\psi, \varphi)$, where
\begin{itemize}
\item $\psi\!: \mM \to \mM'$ is a morphism of $\rR$--modules and
\item $\varphi\!: V \to V'$ is a morphism of $\bar\rA$--modules
\end{itemize}
such that
the following diagram of $\bar\rR$--modules is commutative:
$$
\xymatrix
{
\bar\rR \otimes_{\bar\rA} V \ar[rr]^-{\theta} \ar[d]_{\mathbbm{1} \otimes \varphi} & &
\bar\rR \otimes_\rR \mM \ar[d]^{\mathbbm{1} \otimes \psi}\\
\bar\rR \otimes_{\bar\rA} V' \ar[rr]_-{\theta'} & &
\bar\rR \otimes_\rR \mM'
}
$$
\end{definition}

\noindent
Definition  \ref{D:triples-on-curves} is motivated  by the following
theorem, see \cite[Theorem 12.5]{BDNonIsol}.

\begin{theorem}\label{T:BDdimOne}
The functor
$
\mathbb{F}\!: \CM(\rA) \lar  \Tri(\rA)
$
mapping a maximal Cohen--Macaulay module $M$ to the triple
$\bigl(\tilde{M}, \bar{M},
\theta_M\bigr)$,  is well--defined and is an equivalence of categories.
A quasi--inverse functor $\GG\!: \Tri(\rA) \lar \CM(\rA)$ is defined as follows.
Let $\Tau = (N, V, \theta)$ be an object of $\Tri(\rA)$.  Then $M' = \GG(\Tau) := \pi^{-1}\bigl(\mathrm{Im}(\tilde\theta)\bigr)\subseteq N$,
where $\pi\!: N \to \bar{N}:=N/IN$ is the canonical projection. In other words, we have the following  commutative diagram
\begin{equation}\label{E:FunctorG}
\begin{array}{c}
\xymatrix
{ 0 \ar[r] & I \mM  \ar[r] \ar@{=}[d] & M' \ar[r] \ar[d]   & V
\ar[d]^-{\tilde{\theta}} \ar[r] & 0 \\
0 \ar[r] & I \mM  \ar[r] & \mM
\ar[r]^{\pi} & \bar{\mM}
\ar[r] & 0
}
\end{array}
\end{equation}
in  the category of $\rA$--modules.
\end{theorem}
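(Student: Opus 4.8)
The plan is to read this statement as an instance of Milnor patching along the \emph{conductor square}. Since $I = I\rA = I\rR$ is simultaneously an ideal of $\rA$ and of $\rR$, the commutative square with corners $\rA,\rR,\bar\rA,\bar\rR$ is a pullback of rings, i.e.\ $\rA \cong \rR \times_{\bar\rR} \bar\rA$. Accordingly, $\FF$ should be viewed as \emph{cutting} a Cohen--Macaulay module $M$ into the overring part $\tilde M = \rR \boxtimes_\rA M$, the Artinian part $\bar M = M/IM$, and a gluing datum $\theta_M$ recording how $\bar M$ embeds into the reduction $\check M = \tilde M/I\tilde M$ of $\tilde M$; the functor $\GG$ should reconstruct $M$ as a fibre product, which is precisely the content of diagram \eqref{E:FunctorG}, namely $\GG(\Tau) = \pi^{-1}\bigl(\mathrm{Im}(\tilde\theta)\bigr)$ is the pullback $N \times_{\bar N} V$ of $\pi\!:N\to\bar N$ and $\tilde\theta\!:V\to\bar N$.

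Well-definedness of the two functors is the first thing I would check. For $\FF$, the target triple is legitimate: $\tilde M \in \CM(\rR)$ by Proposition \ref{P:overrings}, $\bar M$ is a Noetherian $\bar\rA$--module, and the required surjectivity of $\theta_M$ together with injectivity of $\tilde\theta_M$ is exactly the lemma preceding Definition \ref{D:triples-on-curves}. For $\GG$, the key observation is that $\GG(\Tau) = \pi^{-1}\bigl(\mathrm{Im}(\tilde\theta)\bigr)$ is an $\rA$--submodule of $N$ (it is the preimage of the $\bar\rA$--submodule $\mathrm{Im}(\tilde\theta) \subseteq \bar N$ under the $\rA$--linear map $\pi$), and since $N$ is Cohen--Macaulay over $\rR$ it is torsion free over $\rA$; hence its submodule $\GG(\Tau)$ is torsion free as well, and therefore Cohen--Macaulay by Lemma \ref{L:LemmaCM}. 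Functoriality on morphisms is routine: a morphism of triples $(\psi,\varphi)$ restricts, via $\psi$, to a morphism of the reconstructed submodules because of the compatibility square in Definition \ref{D:triples-on-curves}.

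The heart of the proof is the construction of the two natural isomorphisms. For $\GG \circ \FF \cong \mathrm{id}$, I would use the natural embedding $M \hookrightarrow \tilde M$ coming from an identification $M = \langle w_1,\dots,w_t\rangle_\rA \subseteq Q^n$, under which $\tilde M = \rR\cdot M$ by Proposition \ref{P:overrings}. By construction $\tilde\theta_M$ is the map induced on $M/IM$ by $M \hookrightarrow \tilde M \to \check M$, so $\mathrm{Im}(\tilde\theta_M)=\pi(M)$ and $M$ lands inside $\pi^{-1}\bigl(\mathrm{Im}(\tilde\theta_M)\bigr) = \GG(\FF(M))$; conversely, if $\pi(x) \in \mathrm{Im}(\tilde\theta_M)$ then $x$ differs from some element of $M$ by an element of $\ker\pi = I\tilde M = I\rR\cdot M = I\cdot M \subseteq M$, giving the reverse inclusion and hence equality. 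For $\FF \circ \GG \cong \mathrm{id}$, start with $\Tau = (N,V,\theta)$ and $M' = \GG(\Tau)$. The surjectivity of $\theta$ says that $\bar N = N/IN$ is generated as a $\bar\rR$--module by $\mathrm{Im}(\tilde\theta) = M'/IN$, which lifts to $N = \rR\cdot M' + IN = \rR\cdot M'$; therefore $\tilde{M'} = \rR\boxtimes_\rA M' = \rR\cdot M' = N$. Using $I\rR = I$ this also yields $IM' = I\rR\cdot M' = IN$, so $\bar{M'} = M'/IM' = M'/IN \cong V$ via the monomorphism $\tilde\theta$, and one checks that under these identifications $\theta_{M'} = \theta$.

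The step I expect to be the main obstacle is this last reconstruction, namely the two equalities $\rR\cdot M' = N$ and $IM' = IN$: they are exactly the points where the epimorphism and monomorphism conditions built into the definition of a triple must be used, and where one must be careful that passing between the abstract tensor products $\bar\rR\otimes_\rR N$, $\bar\rR\otimes_{\bar\rA}V$ and the concrete reductions $N/IN$, $M'/IN$ is harmless. Once these identifications are in place, verifying that the isomorphisms are natural in $M$ (respectively in $\Tau$) is a routine diagram chase, and the theorem follows.
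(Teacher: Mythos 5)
The paper itself contains no proof of Theorem \ref{T:BDdimOne}: it is imported verbatim from \cite[Theorem 12.5]{BDNonIsol}, so the only argument to compare yours against is that of the cited reference. Your proposal is correct and follows essentially that same route --- Milnor patching along the conductor square, $\GG(\Tau)$ realized as the fibre product $N \times_{\bar N} V$, the trivial inclusion $IN \subseteq M'$ giving $\rR \cdot M' = N$, and $I\rR = I$ giving $IM' = IN$ --- and the steps you leave unchecked (that $\theta_{M'} = \theta$ under the identifications, and naturality of the two isomorphisms) are indeed routine.
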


\noindent
In many cases, Theorem \ref{T:BDdimOne} provides  an efficient tool to reduce the classification of indecomposable objects of $\CM(\rA)$ to a certain problem of linear algebra (a matrix problem).

\begin{remark}
There are several variations of the construction appearing in Theorem \ref{T:BDdimOne}, see
\cite[Appendix A]{BDNonIsol} for an account  of them.
\end{remark}

\subsection{\texorpdfstring{Cohen--Macaulay modules over simple curve singularities of type $\CA$~~ }{CM modules over simple curve singularities of type A}}

\noindent Let $\kk$ be an algebraically closed field.
For simplicity, let us  additionally assume that $\mathsf{char}(\kk) \ne 2$, see however Remark \ref{R:remarkchar2}.
For any $m \in \mathbb{N}$,  denote
\begin{equation}\label{E:singA}
S = \CA_m := \kk\llbracket x, u\rrbracket/(x^{m+1} - u^2)
\end{equation}  the corresponding simple curve
singularity of type $\CA_m$. The following is essentially due to Bass \cite{Bass}, see also  \cite{LeuschkeWiegand, Yoshino}.

\begin{theorem}\label{T:CMAn}
The indecomposable Cohen--Macaulay $S$--modules have the following description.
\begin{itemize}
\item Assume $m = 2n$, $n \in \NN$. For any $1 \le i \le n$ consider the ideal
$X_i := (x^i, u)$. Then $X_0 = (1) = S, X_1, \dots, X_n$ is the complete list of indecomposable objects of $\CM(S)$. Moreover, the Auslander--Reiten quiver of $\CM(S)$ has the form
\begin{equation}\label{E:AR1}
\xymatrix{ X_0 \ar@<0.75ex>[r]^-{\cdot x} & X_1 \ar@<0.75ex>[r]^-{\cdot x} \ar@<0.5ex>[l]^-{\iota} & X_2 \ar@<0.75ex>[r]^-{\cdot x} \ar@<0.5ex>[l]^-{\iota} & \cdots \ar@<0.75ex>[r]^-{\cdot x} \ar@<0.5ex>[l]^-{\iota} & X_n \ar@<0.5ex>[l]^-{\iota}  \ar@(ur,dr)[]^-{\pi} }
\end{equation}
Here, $\imath$ denotes the inclusion of ideals and  $x\cdot$ is the multiplication by $x$. The endomorphism
$\pi \in \End_S(X_n)$ is defined as follows: $\pi(x^n) = u$ and $\pi(u) = x^{n+1}$.
\item Assume $m = 2n +1$, $n \in \NN_0$. Again, for any  $1 \le i \le n$ consider
$X_i := (x^i, u) \subset S$. Additionally, denote $X^{\pm}_{n+1}:= (x^{n+1} \pm u)$. Then the indecomposable
Cohen--Macaulay $S$--modules are $X_0 = (1) = S, X_1, \dots, X_n, X_{n+1}^+$ and $X_{n+1}^-$. Moreover, the Auslander--Reiten quiver of $\CM(S)$ is in this case
\begin{equation}\label{E:AR2}
\begin{array}{c}
 \xymatrix{ &&&&& X_{n+1}^+ \ar@<0.25ex>[dl]^-{\iota^+} \\
 X_0 \ar@<0.75ex>[r]^-{\cdot x} & X_1 \ar@<0.75ex>[r]^-{\cdot x} \ar@<0.5ex>[l]^-{\iota} & X_2 \ar@<0.75ex>[r]^-{\cdot x} \ar@<0.5ex>[l]^-{\iota} & \cdots \ar@<0.75ex>[r]^-{\cdot x} \ar@<0.5ex>[l]^-{\iota} & X_n \ar@<0.5ex>[l]^-{\iota}  \ar@<1ex>[ur]^-{\pi^+} \ar@<0.5ex>[dr]^-{\pi^-} & \\
&&&&& X_{n+1}^- \ar@<0.75ex>[ul]^-{\iota^-} \\}
\end{array}
\end{equation}
Here, $\iota$ and $\iota^\pm$ denote inclusions of ideals, $x \cdot$ stands for multiplication by $x$. The maps
$\pi^\pm\!: X_n \lar X_{n+1}^\pm$ are defined as follows: $\pi^\pm(x^n) = (x^{n+1} \pm u)$ and $\pi^\pm(u) = x(x^{n+1} \pm u)$.
\end{itemize}
\end{theorem}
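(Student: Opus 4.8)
The plan is to classify the indecomposable objects of $\CM(S)$ by induction on $m$, taking the passage to the minimal overring $R = \End_S(\idm)$ provided by Theorem \ref{T:RejectionLemma} as the inductive step, and then to read off the Auslander--Reiten quiver from the resulting structure. (One could instead classify the matrix factorisations of $u^2 - x^{m+1}$ via Eisenbud's theorem, but the overring induction is better adapted to the machinery already set up.) First I would check that the listed modules are genuinely indecomposable Cohen--Macaulay modules. Since $S$ is a hypersurface it is Gorenstein, so by Lemma \ref{L:CMsubmFree} every Cohen--Macaulay module embeds into a free module; each ideal $X_i = (x^i, u)$, and in the odd case each $X_{n+1}^{\pm} = (x^{n+1}\pm u)$, contains a regular element and is therefore a torsion--free, hence Cohen--Macaulay, module of rank one. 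Parametrising the normalisation --- writing $x = t^2$, $u = t^{2n+1}$ so that $S = \kk\llbracket t^2, t^{2n+1}\rrbracket \subset \kk\llbracket t\rrbracket$ when $m = 2n$, and using the two smooth branches with normalisation $\kk\llbracket t\rrbracket \times \kk\llbracket t\rrbracket$ when $m = 2n+1$ --- a short valuation computation shows that these ideals are pairwise non--isomorphic and indecomposable.

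The engine of the induction is the identification of the minimal overring, and I would prove that $R = \End_S(\idm) \cong \CA_{m-2}$. In the even case the parametrisation gives $R = \kk\llbracket t^2, t^{2n-1}\rrbracket$, and setting $u' := u/x = t^{2n-1}$ exhibits $R$ as a copy of $\CA_{2n-2}$ in which the role of $u$ is played by $u'$; the odd case is analogous via the two branches. One then records how the ideals transform under restriction: since $u = x\,u'$, the identity $R\cdot X_i = x\cdot(x^{i-1}, u')$ of $R$--ideals, together with the fact that multiplication by $x$ is an automorphism of the rational envelope, shows that $\II(X'_{i-1}) \cong X_i$, where $X'_{j} = (x^j, u')$ denotes the corresponding ideal of $R \cong \CA_{m-2}$; moreover $\II(R) \cong \idm = X_1$.

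With these preparations the classification follows rather formally. The base cases $\CA_0$ (a regular ring, whose only indecomposable Cohen--Macaulay module is the free module $X_0 = S$) and $\CA_1$ (the node, whose indecomposables are $S$ and its two branches $X_1^{\pm}$) are standard. For the inductive step, Theorem \ref{T:RejectionLemma} shows that every indecomposable Cohen--Macaulay $S$--module other than $S$ itself has no free direct summand, and hence is $\II(N)$ for some Cohen--Macaulay $R$--module $N$, while the corollary to Proposition \ref{P:overrings} guarantees that $\II$ induces a bijection between isomorphism classes of indecomposable $R$--modules and those of indecomposable non--free $S$--modules. Applying the induction hypothesis to $R \cong \CA_{m-2}$ and transporting its indecomposables along $\II$ by means of the dictionary $\II(R)\cong X_1$, $\II(X'_{i-1}) \cong X_i$ (and $\II\bigl((X'_n)^{\pm}\bigr) \cong X_{n+1}^{\pm}$ in the odd case) produces exactly the asserted lists, with the correct cardinalities $n+1$ and $n+3$.

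It remains to determine the Auslander--Reiten quiver. The inclusions $\iota\colon X_i \hookrightarrow X_{i-1}$ and the multiplication maps $\cdot x\colon X_{i-1} \to X_i$ are compatible with restriction and are seen to be irreducible; I would verify directly that the short exact sequences they assemble into are the almost split sequences, using the general theory for the Gorenstein curve singularity $S$, where the Auslander--Reiten translate $\tau$ is computed through the syzygy functor and acts on the finite set of indecomposables. This fixes the relevant modules and forces the exceptional endomorphism $\pi$ of $X_n$ (respectively the maps $\pi^{\pm}$ onto $X_{n+1}^{\pm}$) prescribed in the statement. I expect this last step to be the main obstacle: whereas the list of modules drops out almost formally from the overring induction, pinning down \emph{all} irreducible morphisms, checking that no further ones exist, and exhibiting the precise exceptional maps $\pi$ and $\pi^{\pm}$ at the ends of the quiver requires a careful analysis of almost split sequences rather than a purely formal argument.
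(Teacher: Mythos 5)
The paper does not actually prove this theorem: it states it as ``essentially due to Bass'' with references to \cite{Bass}, \cite{LeuschkeWiegand} and \cite{Yoshino}, and your proposal is precisely a reconstruction of that classical argument --- induction through the minimal overring $\End_S(\idm)\cong \CA_{m-2}$, driven by Theorem~\ref{T:RejectionLemma} and Proposition~\ref{P:genrestrict}, which is exactly the machinery the paper itself assembles and then applies to $\RP_{\infty q}$. Your classification half is correct (the key identifications $\II(R)\cong\idm=X_1$ and $x\cdot X_{i-1}'=X_i$, with $u'=u/x$, both check out on the parametrization, in both the even and the two--branch odd case), while the Auslander--Reiten half is left as a plan rather than carried out; that part is settled in the cited sources by just the direct computation of almost split sequences you describe.
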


\begin{remark}\label{R:remarkchar2}
In the case $\mathsf{char}(\kk) = 2$ there are the following subtleties in defining simple curve singularities of type
 $\CA_m$.
 \begin{itemize}
 \item For $m = 2n +1$ one should take the ring  $\CA_{2n+1} = \kk\llbracket x, u\rrbracket/(u(u- x^{n+1}))$ (the ring defined
 by (\ref{E:singA}) is no longer  reduced!). In this case, one should pose $X_{n+1}^+ := (u)$ and $X_{n+1}^- := (u - x^{n+1})$.
 Then the indecomposable Cohen--Macaulay modules are $X_0, \dots, X_{n}, X_{n+1}^\pm$, where $X_i$ has the same definition
 as in the case $\mathsf{char}(\kk) \ne 2$ for $0 \le i \le n$.

 \item For $m = 2n$ there are more simple singularities than in the case
 $\mathsf{char}(\kk) \ne 2$. Namely, for  $1 \le s \le n-1$ consider the ring $\CA_{2n}^{s} = \kk\llbracket x, u\rrbracket/(u^2 + x^{2n+1} + ux^{n+s})$. Then $\CA_{2n}^{s} \not\cong \CA_{2n}^{t}$ for any $1 \le s \ne t \le n-1$. Moreover,
 $\CA_{2n}^{s} \not\cong \CA_{2n}$ for any $1 \le s \le n-1$. However, the description of indecomposable Cohen--Macaulay modules
 over $\CA_{2n}^s$ is essentially the same as over $\CA_{2n}$, see \cite{Bass} and \cite{KiyekSteinke}.  In particular, the Auslander--Reiten quivers of $\CA_{2n}^{s}$ and  $\CA_{2n}$ coincide.
 \end{itemize}
\end{remark}

\noindent
The following result is due to Buchweitz, Greuel and Schreyer \cite[Section 4.1]{BGS}.

\begin{theorem}\label{T:CMAinfty}
For an algebraically closed field $\kk$ (of arbitrary characteristic) let $S = \CA_\infty := \kk\llbracket x, u\rrbracket/(u^2)$.
Then the indecomposable Cohen--Macaulay $S$--modules are $X_0$, $X_1$, $\dots$, $X_\infty$, where $X_0 = (1) = S, X_\infty = (u)$ and
$X_i = (x^i, u)$ for $i \in \mathbb{N}$. In particular, $X_\infty$ is the only indecomposable Cohen--Macaulay $S$--module which is not locally free on the punctured spectrum of $S$.  The Auslander--Reiten quiver of the category $\CM^{\mathsf{lf}}(S)$ has the form
\begin{equation}\label{E:ARQAinfty1}
\xymatrix{ X_0 \ar@<0.75ex>[r]^-{\cdot x} & X_1 \ar@<0.75ex>[r]^-{\cdot x} \ar@<0.5ex>[l]^-{\iota} &  \cdots \ar@<0.75ex>[r]^-{\cdot x} \ar@<0.5ex>[l]^-{\iota} & X_i \ar@<0.5ex>[l]^-{\iota}  \ar@<0.75ex>[r]^-{\cdot x} \ar@<0.5ex>[l]^-{\iota} & \ar@<0.5ex>[l]^-{\iota} \cdots }
\end{equation}
\end{theorem}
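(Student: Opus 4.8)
The plan is to reduce $\CM(S)$ to a transparent matrix problem over the discrete valuation ring $A := \kk\llbracket x\rrbracket$ and then to solve that problem explicitly; the whole argument is uniform in $\chr(\kk)$. First I would record the ring--theoretic setup. Since $u^2 = 0$, the ring $S$ is a finite $A$--algebra, namely $S = A[u]/(u^2)$, with total ring of fractions $Q = F[u]/(u^2)$, where $F := \kk\llbrace x\rrbrace$; here $Q$ is a Gorenstein Artinian local ring with residue field $F$, so Lemma \ref{L:RingofFractions} and Lemma \ref{L:CMsubmFree} apply. The key observation is that an $S$--module is the same datum as a pair $(M, \nu)$, where $M$ is an $A$--module and $\nu \in \End_A(M)$ (the action of $u$) satisfies $\nu^2 = 0$. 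I would then show that $M$ is Cohen--Macaulay precisely when its underlying $A$--module is free: a torsion--free $S$--module embeds into $V := Q \otimes_S M$, which is $A$--torsion--free, hence so is $M$, and a finitely generated torsion--free module over the discrete valuation ring $A$ is free; conversely every nonzerodivisor of $S$ acts invertibly on $V = F \otimes_A M$ and therefore injectively on an $A$--free $M$. Thus $\CM(S)$ is equivalent to the category of pairs $(A^N, \nu)$ with $\nu \in \Mat_N(A)$, $\nu^2 = 0$, the isomorphisms being $\GL_N(A)$--conjugations.

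With this dictionary the listed modules are easy to place. As $A$--modules one has $X_0 = S = A \oplus Au$, $X_i = (x^i, u) = Ax^i \oplus Au$ for $i \in \NN$, and $X_\infty = (u) \cong S/(u) = A$; the corresponding square--zero matrices are $\nu = \smtr{0&0\\1&0}$ for $X_0$, $\nu = \smtr{0&0\\x^i&0}$ for $X_i$, and $\nu = 0$ for $X_\infty$. Tensoring with $F$ shows that the rational envelope of each $X_i$ with $0 \le i < \infty$ is the free $Q$--module $Q$, whereas that of $X_\infty$ is $F = Q/uQ$; by Remark \ref{R:locfree} this already identifies $X_\infty$ as the unique module on the list which is not locally free on the punctured spectrum.

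The heart of the proof is to show that the displayed modules exhaust the indecomposables, i.e.\ that every square--zero $\nu$ over the discrete valuation ring $A$ is $\GL_N(A)$--conjugate to a direct sum of blocks $0$ of size $1$ and $\smtr{0&0\\x^i&0}$ of size $2$. I expect this to be the main obstacle, because one must perform the reduction by \emph{simultaneous} row and column operations, rather than the independent operations allowed in Smith normal form. The plan is to induct on $N$: if $\nu = 0$ the module is a sum of copies of $X_\infty$; otherwise pick an entry of $\nu$ of minimal $x$--valuation, move it into position by a permutation of the basis, and use the inclusion $\im\nu \subseteq \ker\nu$ forced by $\nu^2 = 0$ to clear the remaining entries of its row and column by admissible conjugations, splitting off one block $\smtr{0&0\\x^i&0}$; the induction then applies to the complement. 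Pairwise non--isomorphism of the $X_i$ follows since the invariant $i$ is the minimal valuation in the block, and $X_\infty$ is distinguished by its generic rank.

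Finally, to obtain the Auslander--Reiten quiver of $\CM^{\mathsf{lf}}(S)$ I would check directly that the inclusions $\iota\!: X_i \hookrightarrow X_{i-1}$ and the multiplications $\cdot x\!: X_{i-1} \to X_i$ are irreducible and assemble into almost split sequences $0 \to X_i \to X_{i-1} \oplus X_{i+1} \to X_i \to 0$ for $i \ge 1$, with $X_0 = S$ as the projective vertex at the end; these are read off from the string combinatorics of the matrix problem above. Since $X_\infty$ is excluded from $\CM^{\mathsf{lf}}(S)$, this yields precisely the half--line $\CA_\infty$--shaped quiver \eqref{E:ARQAinfty1}.
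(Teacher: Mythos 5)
This theorem is not proved in the paper at all: it is quoted from Buchweitz--Greuel--Schreyer \cite[Section 4.1]{BGS}. So your argument should be judged as a self-contained replacement for that citation, and in that role it is essentially correct. The reduction of $\CM(S)$ to pairs $(A^N,\nu)$ with $A=\kk\llbracket x\rrbracket$, $\nu\in\Mat_N(A)$, $\nu^2=0$, modulo $\GL_N(A)$--conjugation, is valid (torsion--freeness over $S$ is indeed equivalent to freeness of the underlying $A$--module, by exactly the arguments you give); the matrices you attach to $X_0$, $X_i$, $X_\infty$ are the right ones; and the normal--form statement you aim for --- every square--zero matrix over $A$ is conjugate to a direct sum of blocks $(0)$ and $\smtr{0&0\\x^i&0}$ --- does hold and does yield the list $X_0, X_1,\dots,X_\infty$, with non--isomorphism detected by the $A$--rank and the conjugation--invariant ideal generated by the entries of $\nu$. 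This is a more elementary route than invoking matrix factorizations, and it is uniform in the characteristic, as required.

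Two places in your induction need to be firmed up. First, the entry of minimal valuation may sit on the diagonal, where no permutation moves it to position $(2,1)$; this is harmless, because if $\nu_{kk}$ had strictly smaller valuation than all off--diagonal entries, then $(\nu^2)_{kk}=\nu_{kk}^2+\sum_{p\ne k}\nu_{kp}\nu_{pk}$ would have the valuation of $\nu_{kk}^2$ and hence be nonzero; so an off--diagonal entry of minimal valuation always exists, but you should say so. Second, ``clearing the row and column'' does not work entry by entry, and the enabling identity should be exhibited. The correct mechanism is: with every entry divisible by $x^i$ and $\nu_{21}=x^i$ after scaling, replace $e_2$ by $x^{-i}\nu e_1$, so that $\nu e_1=x^i e_2$ and $\nu e_2=0$ (all entries remain divisible by $x^i$); then, writing $\nu e_k=a_k e_1+b_k e_2+\sum_{m\ge3}c_{mk}e_m$ for $k\ge3$, the relation $(\nu^2)_{2k}=0$ reads $x^i a_k+\sum_m b_m c_{mk}=0$, and precisely this shows that the vectors $f_k=e_k-(b_k/x^i)\,e_1$ span a $\nu$--invariant complement of $\langle e_1,e_2\rangle$, since $\nu f_k-\sum_m c_{mk}f_m=\bigl(a_k+\sum_m c_{mk}b_m/x^i\bigr)e_1=0$. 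So the splitting--off step is correct, but only because of this identity, which your sketch hints at (via $\im\nu\subseteq\ker\nu$) without producing. Finally, the Auslander--Reiten statement remains a genuine sketch: the sequences $0\to X_i\to X_{i-1}\oplus X_{i+1}\to X_i\to 0$, with maps $m\mapsto(m,xm)$ and $(n,p)\mapsto xn-p$, are easily checked to be exact and non--split, but proving they are almost split and that $\iota$ and $\cdot x$ exhaust the irreducible maps requires computing Hom spaces and the radical of $\CM^{\mathsf{lf}}(S)$ from the classification just obtained; ``read off from the string combinatorics'' is not yet an argument. With these points filled in, your proof is complete.
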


\begin{remark}
It is natural to extend the quiver (\ref{E:ARQAinfty1}) with the remaining indecomposable Cohen--Macaulay $S$--module $X_\infty$.
Moreover, for any $i \in \mathbb{N}_0$ denote $\pi_i\!: X_i \lar X_\infty$ the map sending $x^i$ to $u$ and $u$ to $0$.  Of course,
$\pi_{i+1} = x \cdot \pi_i$ for any $i \in \mathbb{N}_0$. The entire structure of the category $\CM(S)$ can be visualized by the diagram:
\begin{equation}\label{E:ARQAinfty2}
\xymatrix{ X_0 \ar@<0.75ex>[r]^-{\cdot x} & X_1 \ar@<0.75ex>[r]^-{\cdot x} \ar@<0.5ex>[l]^-{\iota} &  \cdots \ar@<0.75ex>[r]^-{\cdot x} \ar@<0.5ex>[l]^-{\iota} & X_i \ar@<0.5ex>[l]^-{\iota}  \ar@<0.75ex>[r]^-{\cdot x} \ar@<0.5ex>[l]^-{\iota} & \ar@<0.5ex>[l]^-{\iota} \ar@<0.75ex>[r]^-{\pi} \cdots & X_\infty \ar@<0.5ex>[l]^-{\iota}}
\end{equation}
\end{remark}

\begin{definition}\label{D:CMstabilized}
Let $(A, \idm)$ be a Cohen--Macaulay curve singularity. Consider the category $\overline{\CM}(A)$ defined as follows:
\begin{itemize}
\item $\Ob\bigl(\overline{\CM}(A)\bigr) = \Ob\bigl(\CM(A)\bigr)$.
\item For $M, N \in \Ob\bigl(\overline{\CM}(A)\bigr)$ we set
$$
\overline{\Hom}_A(M, N) = \mathrm{Im}\Bigl(\Hom_A(M, N) \lar \Hom_{\kk}\bigl(M \otimes_A \kk, N\otimes_A \kk\bigr)\Bigr).
$$
\item The composition of morphisms in $\overline{\CM}(A)$ is induced by the composition of morphisms in $\CM(A)$.
\end{itemize}
\end{definition}

\noindent
The following result is straightforward.
\begin{lemma}\label{L:CMstabilized} The canonical projection functor $\CM(A) \to \overline{\CM}(A)$ is full and respects isomorphy classes  of objects. Moreover, if
$S$ is  a curve singularity of type $\CA_m$ for some $m \in \mathbb{N}^\ast \cup \{\infty\},$ then $\overline{\CM}(S)$ is equivalent to the additive closure of the path algebra category of the corresponding Auslander--Reiten quivers (\ref{E:AR1}), (\ref{E:AR2}) respectively
(\ref{E:ARQAinfty2}) subject to the following zero relations:
\begin{itemize}
\item $(\cdot x) \circ  \imath = \imath \circ (\cdot x)  = 0$.
\item The inclusion $\imath\!: X_1 \to X_0$ is zero in $\overline{\CM}(S)$.
\item $\begin{cases}
\pi^2 = 0, & \text{\sl if $m$ is even}, \\
\pi^\pm \circ \imath^+ = \pi^\pm \circ \imath^- = \imath^+ \circ \pi^{+} +  \imath^- \circ \pi^{-} = 0, & \text{\sl if $m$ is odd}, \\
\pi \circ \iota = 0, &  \text{\sl if $m = \infty$}.
\end{cases}$
\end{itemize}
\end{lemma}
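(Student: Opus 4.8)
The plan is to establish the two assertions separately. The first claim — that the canonical projection functor $\CM(A) \to \overline{\CM}(A)$ is full and respects isomorphism classes — is purely formal. Fullness is immediate from the definition of $\overline{\Hom}_A(M,N)$ as the \emph{image} of the reduction map $\Hom_A(M,N) \to \Hom_\kk(M\otimes_A\kk, N\otimes_A\kk)$: every morphism in $\overline{\CM}(A)$ lifts by construction. For the statement about isomorphy classes, I would observe that if $\psi \in \Hom_A(M,N)$ becomes an isomorphism in $\overline{\CM}(A)$, then $\psi \otimes_A \kk$ is an isomorphism of $\kk$--vector spaces, hence by Nakayama's lemma $\psi$ is surjective, and a surjection between Cohen--Macaulay modules of the same finite length data over a one--dimensional ring that is an isomorphism modulo $\idm$ is an isomorphism. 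Thus $M \cong N$ in $\CM(A)$, and conversely isomorphic modules remain isomorphic after applying any functor.

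\emph{The substantial part} is the explicit identification of $\overline{\CM}(S)$ for $S$ of type $\CA_m$. Here I would invoke Theorems \ref{T:CMAn}, \ref{T:CMAinfty} and the surrounding remarks, which list all indecomposables $X_i$ (together with $X_{n+1}^\pm$ or $X_\infty$ as appropriate) and give the Auslander--Reiten quiver with its generating morphisms $\cdot x$, $\iota$, $\pi$ (or $\pi^\pm$). Since the quiver already contains a generating set of morphisms between indecomposables, the task reduces to computing, for each pair of indecomposables, the image of $\Hom_S(X_i, X_j)$ in $\Hom_\kk(X_i \otimes_S \kk, X_j \otimes_S \kk)$, and checking that the displayed zero relations are exactly the relations that hold after reduction modulo $\idm$. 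I would proceed module by module: each $X_i = (x^i, u)$ is minimally generated by the two elements $x^i$ and $u$, so $X_i \otimes_S \kk$ is two--dimensional (one--dimensional for $X_0$ and for $X_\infty$, $X_{n+1}^\pm$), and each generating morphism can be written down on generators as in the theorems. The reduction of $\cdot x$ sends $x^i \mapsto x^{i+1}$ and $u \mapsto xu$, both of which lie in $\idm X_{i+1}$, so on the level of $\otimes_S\kk$ it acts by a nilpotent rank--one matrix; similarly $\iota$ reduces to a rank--one map. Composing these and reading off which composites land in $\idm$ yields precisely the relations $(\cdot x)\circ\iota = \iota\circ(\cdot x) = 0$ and the vanishing of $\iota\colon X_1 \to X_0$.

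\emph{The main obstacle} I expect is the bookkeeping around the endomorphism $\pi$ (respectively the maps $\pi^\pm$ in the odd case): I must verify that $\pi^2 = 0$ in $\overline{\CM}(S)$ even though $\pi^2 \ne 0$ in $\CM(S)$, by checking that $\pi^2$ sends generators into $\idm X_n$, and in the odd case verify the two relations $\pi^\pm\circ\iota^\pm = 0$ and the ``parity'' relation $\iota^+\circ\pi^+ + \iota^-\circ\pi^- = 0$ modulo $\idm$, which requires tracking signs in the definitions $\pi^\pm(x^n) = (x^{n+1}\pm u)$ carefully. Once these composites are checked on the explicit generators, I would argue that the listed relations generate all relations: any morphism in $\overline{\CM}(S)$ is a $\kk$--linear combination of reduced paths, two reduced paths between the same pair of indecomposables that are not forced equal by the relations induce different maps on $\otimes_S\kk$, and counting dimensions of $\overline{\Hom}_S(X_i,X_j)$ against the number of surviving paths shows the quiver algebra with the stated relations maps isomorphically onto $\overline{\CM}(S)$. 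The case $m = \infty$ is handled identically using the extended quiver (\ref{E:ARQAinfty2}) and the relation $\pi\circ\iota = 0$, noting that $X_\infty$, being generated by the single element $u$ with $u^2 = 0$, contributes no further morphisms beyond those already recorded.
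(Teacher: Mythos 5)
The step that fails is in your treatment of isomorphy classes. From the hypothesis that $\psi$ becomes an isomorphism in $\overline{\CM}(A)$ you use only that $\psi\otimes_A\kk$ is a linear isomorphism, and then invoke the claim that a surjection between Cohen--Macaulay modules which is an isomorphism modulo $\idm$ is an isomorphism. That claim is false. Take $A = \kk\llbracket x,y\rrbracket/(xy)$ and let $\psi\colon A \lar A/(y)\cong \kk\llbracket x\rrbracket$ be the canonical surjection: both modules are Cohen--Macaulay over $A$, and $\psi\otimes_A\kk$ is an isomorphism, yet $\ker(\psi) = (y) \neq 0$. Your phrase ``of the same finite length data'' is never established and cannot be: these two modules have the same reduction modulo $\idm$ but different ranks. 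What saves the lemma is precisely the datum your argument discards: an isomorphism in $\overline{\CM}(A)$ has an inverse which, by fullness, is induced by an honest module homomorphism $\phi\in\Hom_A(N,M)$. Then $(\phi\circ\psi)\otimes_A\kk = \mathrm{id}_{M\otimes_A\kk}$, so $\phi\circ\psi$ is surjective by Nakayama, hence bijective (a surjective endomorphism of a Noetherian module is an isomorphism); likewise $\psi\circ\phi$ is bijective, and therefore $\psi$ itself is an isomorphism in $\CM(A)$. In the example above no such $\phi$ exists: any $\phi\in\Hom_A\bigl(\kk\llbracket x\rrbracket, A\bigr)$ satisfies $\phi(1)\in\Ann_A(y)=(x)\subseteq\idm$, so its class in $\overline{\CM}(A)$ is zero.

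The second half of your proposal follows the natural route (the paper records no proof, declaring the lemma straightforward), and the relations do hold by the generator computations you describe; for instance $\pi^2$ equals multiplication by $x$ on $X_n$, which dies modulo $\idm X_n$, and $(\iota^+\circ\pi^+ + \iota^-\circ\pi^-)$ sends $x^n\mapsto 2x^{n+1}$ and $u \mapsto 2x^{n+2}$, both in $\idm X_n$. But note an internal inconsistency: you assert that $\cdot x$ sends both generators $x^i$ and $u$ into $\idm X_{i+1}$ and simultaneously that it reduces to a rank-one map. If both images lay in $\idm X_{i+1}$, the reduction would be zero and the whole quiver presentation would collapse. In fact $x^{i+1}$ is a \emph{minimal generator} of $X_{i+1}=(x^{i+1},u)$, so only the image $xu$ of $u$ dies; similarly $\iota\colon X_{i+1}\to X_i$ reduces to a nonzero rank-one map exactly when $i\geq 1$, which is why the single relation ``$\iota\colon X_1\to X_0$ is zero'' appears in the list. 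Finally, your completeness step (that the listed relations generate \emph{all} relations) is only announced, not carried out; to finish one must actually compute the dimensions $\dim_\kk\overline{\Hom}_S(X_i,X_j)$ --- e.g.\ $\overline{\End}_S(X_n)$ is two--dimensional, spanned by $\mathrm{id}$ and the class of $\pi$, since $\End_S(X_n)=S[u/x^n]$ --- and match them against the number of paths not annihilated by the relations. That computation is routine, but it is the actual content of the lemma.
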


\section{\texorpdfstring{Tameness of  $\CM(\RP_{\infty q})$}{Tameness of Cohen-Macaulay modules over non-reduced singularities of type P}}

\noindent
Let $\kk$ be an algebraically closed field such that $\mathsf{char}(\kk) \ne 2$  and $p, q \in \mathbb{N}_{\ge 2}$. Consider the curve singularity
\begin{align}\label{E:Ppq}
\RP_{pq} &:= \kk\llbracket x, y, z\rrbracket/(xy, x^p + y^q - z^2).
\intertext{By a result of Drozd and Greuel
\cite[Section 3]{DG}, the category $\CM(\RP_{pq})$ is representation tame.  For any $q \in  \mathbb{N}_{\ge 2}$
consider the limiting non--reduced singularity}
\label{E:Pinftyq}
\RP_{\infty q} &:= \kk\llbracket x, y, z\rrbracket/(xy,  y^q - z^2)
\end{align}
as well as the ``largest degeneration''  $\RP_{\infty\infty} := \kk\llbracket x, y, z\rrbracket/(xy,  z^2)$ of the family
(\ref{E:Ppq}).

\medskip
\noindent
The first major  result of this article is the following.

\begin{theorem}\label{T:main1}
The non--reduced curve singularities $\RP_{\infty q}$  have tame Cohen--Macaulay representation type for any
 $q \in \mathbb{N}_{\ge 2} \cup \{\infty\}$.
\end{theorem}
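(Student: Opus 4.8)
The plan is to realize the chain of categories and functors displayed in the introduction and thereby reduce the statement to Bondarenko's classification of representations of bunches of semi-chains \cite{bo1}. Throughout I write $\RP = \RP_{\infty q}$ for $q \in \mathbb{N}_{\ge 2} \cup \{\infty\}$, with the modification of Remark \ref{R:RonPQchar2} understood whenever $\mathsf{char}(\kk) = 2$.

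\emph{Reduction to the minimal overring.} First I would observe that $\RP$ is the quotient of the regular ring $\kk\llbracket x, y, z\rrbracket$ by the regular sequence $(xy,\, y^q - z^2)$, understood as $(xy, z^2)$ when $q = \infty$; thus $\RP$ is a one-dimensional complete intersection, and in particular Gorenstein. Hence Theorem \ref{T:RejectionLemma} applies: setting $\RA := \End_{\RP}(\idm)$, the ring $\RA$ is the minimal overring of $\RP$, and its last item together with Remark \ref{R:RejectionLemma} shows that every indecomposable Cohen--Macaulay $\RP$--module is either isomorphic to $\RP$ or is the restriction $\II(N)$ of an indecomposable $N \in \CM(\RA)$. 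Since by Proposition \ref{P:overrings} and its corollary the functor $\II$ is fully faithful and reflects both indecomposability and isomorphism classes, $\CM(\RP)$ and $\CM(\RA)$ agree up to the single object $\RP$; in particular $\CM(\RP)$ is tame if and only if $\CM(\RA)$ is. It therefore suffices to establish tameness of $\CM(\RA)$.

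\emph{Passage to a matrix problem.} By Lemma \ref{L:RingofFractions} the total ring of fractions $Q$ of $\RP$ splits according to the minimal primes: one factor $\kk\llbrace x\rrbrace[z]/(z^2)$, coming from the $x$--axis along which $z$ is nilpotent, and the remaining factors being fields, coming from the reduced branch $\{x = 0\}$ on which $z^2 = y^q$. As each factor is a Gorenstein Artinian local ring, $Q$ is self--injective, so Lemma \ref{L:CMsubmFree} is available (this is what underlies the free embeddings used in Remark \ref{R:RejectionLemma}). Geometrically $\RP$ is obtained by gluing, at the closed point, a ribbon of type $\CA_\infty = \kk\llbracket x, u\rrbracket/(u^2)$ to the reduced singularity $\kk\llbracket y, z\rrbracket/(z^2 - y^q)$ of type $\CA_{q-1}$ (again of type $\CA_\infty$ when $q = \infty$). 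I would then fix a finite overring $\RR$ of $\RA$ which is a product of curve singularities of type $\CA_m$ and $\CA_\infty$ --- reflecting the reduced branch and the nilpotent ribbon respectively --- for which $\CM(\RR)$ is therefore completely described by Theorems \ref{T:CMAn} and \ref{T:CMAinfty}; that $\RA \subseteq \RR \subseteq Q$ is guaranteed by the minimality of $\RA$ in Theorem \ref{T:RejectionLemma}. After computing the conductor $I = \Ann_\RA(\RR/\RA)$ and the Artinian quotients $\bar\RA = \RA/I$ and $\bar\RR = \RR/I$, the equivalence $\FF\!: \CM(\RA) \simeq \Tri(\RA)$ of Theorem \ref{T:BDdimOne} converts the problem into the classification of triples $(\mM, V, \theta)$ with $\mM \in \CM(\RR)$, $V \in \mod(\bar\RA)$, and $\theta$ a gluing epimorphism of $\bar\RR$--modules.

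\emph{Bunch of semi-chains and conclusion.} Finally I would encode these triples as objects of the bimodule category $\Rep(\dX)$: the module $\mM$ decomposes into indecomposables drawn from the explicit lists of Theorems \ref{T:CMAn} and \ref{T:CMAinfty}, the relevant maps between them are the stable morphisms governed by the Auslander--Reiten quivers and the zero relations of Lemma \ref{L:CMstabilized}, while $\theta$ becomes a system of matrices over $\bar\RR$ to be classified up to the induced transformations. The resulting functor $\PP\!: \Tri(\RA) \to \Rep(\dX)$ preserves indecomposability and isomorphism classes, and the decisive point is to verify that $\dX$ is a \emph{bunch of semi-chains} in the sense of \cite{bo1} --- the same mechanism underlying the reduced case of Drozd and Greuel \cite{DG}, but with the chains enlarged by the additional links contributed by the nilpotent $\CA_\infty$--ribbon, which is exactly where the non--reduced structure enters. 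Bondarenko's theorem then yields tameness of $\Rep(\dX)$, hence of $\Tri(\RA)$, of $\CM(\RA)$, and finally of $\CM(\RP)$. I expect the main obstacle to be precisely this last identification: transcribing the $\bar\RR$--semilinear gluing over the \emph{non--reduced} Artinian ring $\bar\RR$ into the bunch--of--semi--chains formalism and checking that $\PP$ reflects indecomposability, since it is here that the embedded nilpotents could a priori spoil tameness.
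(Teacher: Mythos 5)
Your proposal follows the paper's route step for step: pass to the minimal overring $\RA = \End_{\RP}(\idm)$ via Bass's theorem, replace $\CM(\RA)$ by the category of triples $\Tri(\RA)$, encode triples as a matrix problem $\Rep(\dX)$, and invoke Bondarenko. The reduction to $\CM(\RA)$ is carried out correctly. However, everything you explicitly defer --- ``computing the conductor'' and ``transcribing the gluing into the bunch--of--semi--chains formalism'' --- is exactly where the paper's proof lives, and your description of that step rests on a misconception. You anticipate that the main obstacle is the ``$\bar\RR$--semilinear gluing over the \emph{non--reduced} Artinian ring $\bar\RR$'' and that ``embedded nilpotents could a priori spoil tameness.'' In fact the paper's key observation (stated as \emph{the} reason for passing from $\RP$ to $\RA$) is that the conductor $I = \Ann_{\RA}(\RR/\RA)$ of the overring $\RR = \kk\llbracket x,u\rrbracket/(u^2) \times \kk\llbracket y,v\rrbracket/(y^q - v^2)$ \emph{equals the maximal ideal} $\idm = (x,y,u,v)_{\RA}$, so that $\bar\RA \cong \kk$ and $\bar\RR \cong \kk \times \kk$. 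Both are reduced; no nilpotents survive in the gluing data, which is just a pair of matrices $(\Theta_x,\Theta_y)$ over $\kk$. Without this computation one does not know that the decorations of the matrix problem are this simple, and the whole identification with a bunch of semi--chains is unsupported.

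The non--reducedness of $\RP$ does enter, but not where you place it: it enters through the \emph{list of row labels}, because the ribbon factor $\RR_x = \kk\llbracket x,u\rrbracket/(u^2)$ has infinitely many indecomposable Cohen--Macaulay modules $X_0, X_1, \dots, X_\infty$ (Theorem \ref{T:CMAinfty}), so the poset $\dE_x$ becomes an infinite chain --- this is what distinguishes the problem from the finite chains in Drozd--Greuel's reduced case. The genuinely semi--chain feature (incomparable elements) comes from the \emph{reduced} branch when $q = 2s$ is even, where the two ideals $Y_s^{\pm} = (y^s \pm v)$ give incomparable labels $\zeta_s^{\pm}$; for $q$ odd or $q = \infty$ one actually gets a bunch of chains. (A small related slip: for $q = \infty$ the factor of $Q$ over the branch $\{x=0\}$ is $\kk\llbrace y\rrbrace[v]/(v^2)$, not a field.) So the proposal identifies the correct pipeline but leaves a genuine gap at its decisive step: the explicit computation of $\RA$, $\RR$, the conductor, and the row/column combinatorics with their transformation rules --- i.e.\ the verification that $\Rep(\dX)$ really is representations of a bunch of semi--chains --- and the difficulty it flags in place of that verification is not the one that actually has to be overcome.
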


\begin{proof}
\noindent
1.~Since $\RP := \RP_{\infty q}$ is a complete intersection, it is Gorenstein. Let $\RA = \End_{\RP}(\idm)$ be the minimal overring of $\RP$. The first statement of    Theorem \ref{T:RejectionLemma} implies that
\begin{equation}\label{E:minoverring}
\RA   \cong \kk\llbracket x, y, u, v\rrbracket/(xy, yu, uv, vx, u^2, y^q- v^2),
\end{equation}
where the canonical inclusion $\imath\!: \RP \lar \RA$ maps $x$ to $x$, $y$ to $y$ and $z$ to $u+v$ (in what follows,
the generator $y^q - v^2$ has to be replaced by $v^2$ for $q = \infty$).
Note that $u = \frac{\displaystyle xz}{\displaystyle x+y}$ and $v = \frac{\displaystyle yz}{\displaystyle x+y}$ if we regard $\RA$ as a subring of the total ring of fractions
$\RQ = \RQ(\RP)$.  According to Theorem  \ref{T:RejectionLemma}, any non--regular indecomposable Cohen--Macaulay $\RP$--module is a restriction of some indecomposable Cohen--Macaulay $\RA$--module. Thus, it has to be shown that the category
$\CM(\RA)$ has tame representation type.

\medskip
\noindent
2.~ Next, note that $\RR = \kk\llbracket x,u\rrbracket/(u^2) \times \kk\llbracket y, v\rrbracket/(y^q - v^2)$ is an overring of $\RA$.
Indeed, we have an inclusion $\RR  \subset \RQ$, where both idempotents of $\RA$ can be expressed as follows:
$$
e_x := (1, 0) = \frac{x}{x+y} \quad \mbox{\rm and} \quad
e_y := (0, 1) = \frac{y}{x+y}.
$$
The reason  to pass from $\RP$ to its overring $\RA$ is explained by the following observation:
the conductor ideal $I := \mathsf{ann}_{\RA}(\RR/\RA)$ coincides with the maximal ideal $\idm = (x, y, u, v)_\RA$.
Hence,  $\bar{\RA}:= \RA/I \cong  \kk$ and $\bar{\RR} := \RR/I \cong \kk_x \times \kk_y = \kk \times \kk$. Under this identification, the canonical inclusion
$\bar{\RA} \to \bar{\RR}$ is identified with the diagonal embedding.

\noindent According to Theorem \ref{T:BDdimOne}, the category $\CM(\RA)$ is equivalent to the category of triples $\Tri(\RA)$. Thus, we have to show tameness of $\Tri(\RA)$. Let $\Tau = (N, V, \theta)$ be a triple of $\RA$. Then the following facts are true.
\begin{itemize}
\item Since $V$ is just a module over $\bar{\RA} \cong \kk$, we have: $V \cong \kk^t$ for some $t \in \mathbb{N}_0$.
\item Denote  $\RR_x = \kk\llbracket x, u\rrbracket/(u^2)$ and $\RR_y= \kk\llbracket y, v\rrbracket/(y^q - v^2)$. Then
$\RR = \RR_x \times \RR_y$ and $N \cong N_x \oplus N_y$, where $N_x \in \CM(\RR_x)$ and $N_y \in \CM(\RR_y)$.
According to Theorem \ref{T:CMAn} and Theorem \ref{T:CMAinfty}, the Cohen--Macaulay modules
$N_x$ and $N_y$ split  into a direct sum of ideals
\begin{itemize}
\item $X_0 = (e_x)_\RR \cong \RR_x$, $X_i = (x^i, u)_\RR$, for $i \in \mathbb{N}^\ast$ and $X_\infty  = (u)$.
\item $Y_0 = (e_y)_\RR \cong \RR_y$, $Y_j = (y^j, v)$ for $1 \le j \le  s-1$ and
$Y_s^\pm = (y^{s} \pm v)_\RR$
if $q=2s$ is even, respectively  $Y_s = (y^s, v)$ if $q=2s+1$ is odd.
    \end{itemize}
\item We have:  $\bar{N}_x = N_x/IN_x \cong \kk_x^{m}$ and    $\bar{N}_y = N_y/IN_y \cong \kk_y^{n}$ for some $m, n \in \NN_0$. In what follows, we choose bases of $\bar{N}_x$ and $\bar{N}_y$ induced
      by the distinguished generators of the ideals which occur in a direct sum decomposition of $N_x$ and $N_y$.
      Thus, the gluing map $\theta\!: \bar{\RR} \otimes_{\bar{\RA}} V \lar N/IN$ is given by a pair of matrices $(\Theta_x, \Theta_y) \in \Mat_{m \times t}(\kk) \times \Mat_{n \times t}(\kk)$.
\item The condition that the morphism of $\bar{\RR}$--modules $\theta$ is surjective just means  that both matrices $\Theta_x$ and $\Theta_y$ have full row rank.
The condition that $\tilde\theta$ is injective is equivalent to say that the matrix
$\tilde\Theta:= \Bigl(\begin{smallmatrix}\displaystyle \Theta_x \\ \displaystyle \Theta_y\end{smallmatrix}\Bigr)$ has full column rank.
\end{itemize}
\noindent
3.~
Let us now describe the matrix problem underlying a description of isomorphy classes of objects of $\Tri(\RA)$.
 If two triples $\Tau = (N, V, \theta)$ and $\Tau' = (N', V', \theta')$ are isomorphic, then $N \cong N'$ and $V \cong V'$.
Hence, we may without loss of generality pose:
\begin{itemize}
\item $N'  = N = N_x \oplus N_y = \bigl(\bigoplus_{i} X_i^{\oplus m_i}\bigr) \oplus \bigl(\bigoplus_{j} Y_j^{\oplus n_j}\bigr)$ for some $m_i, n_j \in \NN_0$.
\item $V' = V = \kk^t$ for certain $t \in \NN_0$.
\end{itemize}
Then the following is true: we have an isomorphism
$$
\bigl(N, V, (\Theta_x, \Theta_y)\bigr) \cong \bigl(N, V, (\Theta'_x, \Theta'_y)\bigr)
$$
in the category $\Tri(\RA)$ if any only if there exist automorphisms $\Psi_x \in \Aut_{\RR_x}(N_x)$,
$\Psi_y \in \Aut_{\RR_y}(N_y)$ and $\Phi \in \GL_t(\kk)$ such that
\begin{equation}\label{E:transfrule}
\Theta'_x = \bar{\Psi}_x \Theta_x \Phi^{-1} \quad
\mbox{and} \quad \Theta'_y = \bar{\Psi}_y \Theta_y \Phi^{-1},
\end{equation}
where $\bar{\Psi}_x \in \GL_m(\kk)$ (respectively $\bar{\Psi}_y \in \GL_n(\kk)$) is the induced automorphisms of
$\bar{N}_x \cong \kk^m$ (respectively  $\bar{N}_y \cong \kk^n$).  The transformation rule (\ref{E:transfrule}) leads to the following problem of linear algebra (a matrix problem).
\begin{itemize}
\item We have two matrices $\Theta_x$ and $\Theta_y$ over $\kk$ with the same number of columns. The number of rows
of $\Theta_x$ and $\Theta_y$ can be different. In particular, it can be zero for one of these matrices.

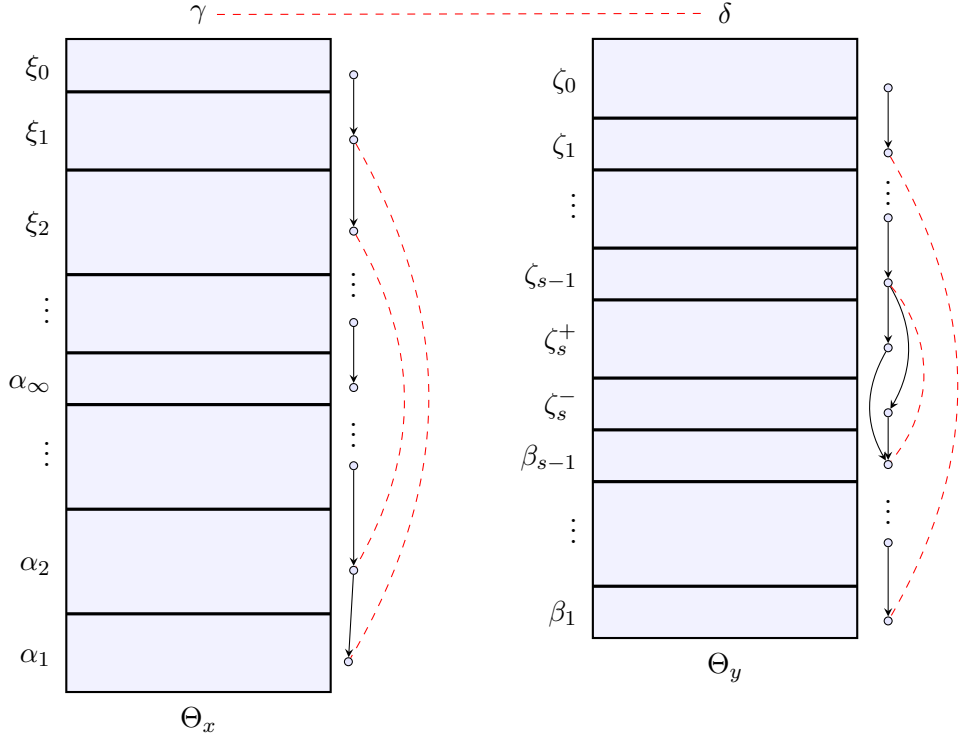
\begin{figure}
\begin{center}
\begin{tikzpicture}[dot/.style={fill=blue!10,circle,draw, inner sep=1pt, minimum size=3pt}]


\matrix[tbl5,text width=100pt, minimum height=20pt,
row 2/.style={minimum height=30pt},
row 3/.style={minimum height=40pt},
row 4/.style={minimum height=30pt},
row 6/.style={minimum height=40pt},
row 7/.style={minimum height=40pt},
row 8/.style={minimum height=30pt},
name=table] at(0,0)
{
~\\
~\\
~\\
~\\
~\\
~\\
~\\
~\\
};

\sfrm{table}{8}{1};
\foreach \i in {1,..., 7}
{
\hsline{table}{\i}{1};
\node[dot,base right=7pt of table-\i-1](n\i){};
}

\node[dot,base right=5pt of table-8-1](n8){};

\draw[-stealth](n1) to (n2); \draw[-stealth](n2) to (n3);
\path (n3) to node{$\vdots$} (n4);
\draw[-stealth](n4) to (n5);
\path (n5) to node{$\vdots$} (n6);
\draw[-stealth](n6) to (n7); \draw[-stealth](n7) to (n8);

 \draw[dashed, red] (n2) to[bend left] (n8); \draw[dashed, red] (n3) to[bend left] (n7);

\node[above=2pt of table-1-1.north](g){$\gamma$};
\node[below=2pt of table-8-1.south]{$\Theta_x$};

\node[base left=2pt of table-1-1]{{$\xi_{0}$}};
\node[base left=2pt of table-2-1]{$\xi_{1}$};
\node[base left=2pt of table-3-1]{$\xi_{2}$};
\node[base left=2pt of table-4-1]{$\vdots$};
\node[base left=2pt of table-5-1]{$\alpha_{\infty}$};
\node[base left=2pt of table-6-1]{$\vdots$};
\node[base left=2pt of table-7-1]{$\alpha_{2}$};
\node[base left=2pt of table-8-1]{$\alpha_{1}$};
%

\matrix[tbl5,text width=100pt, minimum height=20pt,
row 1/.style={minimum height=30pt},
row 3/.style={minimum height=30pt},
row 5/.style={minimum height=30pt},
row 8/.style={minimum height=40pt},
name=table] at(7, 0.36)
{
~\\
~\\
~\\
~\\
~\\
~\\
~\\
~\\
~\\
};

\sfrm{table}{9}{1};
\foreach \i in {1,..., 8}
{
\hsline{table}{\i}{1};
\node[dot,base right=10pt of table-\i-1](n\i){};
}

\node[dot,base right=10pt of table-9-1](n9){};

\draw[-stealth](n1) to (n2);
\path (n2) to node{$\vdots$} (n3);
\draw[-stealth](n3) to (n4); \draw[-stealth](n4) to (n5);
\draw[-stealth](n6) to (n7); \path (n7) to node{$\vdots$} (n8);
\draw[-stealth](n8) to (n9);
 \draw[-stealth] (n4) to[bend left] (n6); \draw[-stealth] (n5) to[bend right] (n7);

 \draw[dashed, red] (n2) to[bend left] (n9);
 \draw[dashed, red] (n4) to[bend left=40pt] (n7);

\node[above=2pt of table-1-1.north](d){$\delta$};
\node[below=2pt of table-9-1.south]{$\Theta_y$};

\draw[dashed, red] (g) to (d);
\node[base left=2pt of table-1-1]{{$\zeta_{0}$}};
\node[base left=2pt of table-2-1]{$\zeta_{1}$};
\node[base left=2pt of table-3-1]{$\vdots$};
\node[base left=2pt of table-4-1]{$\zeta_{s-1}$};
\node[base left=2pt of table-5-1]{$\zeta_{s}^+$};
\node[base left=2pt of table-6-1]{$\zeta_{s}^-$};
\node[base left=2pt of table-7-1]{$\beta_{s-1}$};
\node[base left=2pt of table-8-1]{$\vdots$};
\node[base left=2pt of table-9-1]{$\beta_{1}$};
\end{tikzpicture}
\end{center}
\caption{Matrix problem for the case $q=2s$}
\label{Fig:even}
\end{figure}

\item Rows of $\Theta_x$ are divided into horizontal blocks indexed by elements of the
linearly  ordered set
 $$\qquad \dE_x = \bigl\{ \
 \xi_0 < \xi_1 < \dots < \xi_i < \dots < \alpha_\infty < \dots < \alpha_i < \dots < \alpha_1 \ \bigr\}.$$
 The role of the ordering $<$ will be explained below.
 \item
     The block labeled by $\xi_0$ has $m_0$ rows, the block labeled by $\alpha_\infty$ has $m_\infty$ rows.
     The blocks labeled by $\xi_i$ and $\alpha_i$ both have $m_i$ rows. Thus, the matrix $\Theta_x$ has
    $
    m = m_0 + m_\infty + 2(m_1 + \dots +  m_i + \dots)
    $ rows.
\item The row division of $\Theta_y$ depends on the parity of the parameter $q$.
\begin{itemize}
\item For $q = \infty$ the horizontal blocks of $\Theta_y$ are marked with the symbols of the linearly ordered set
$$ \qquad \qquad
\dE_y = \dE_y^{\infty} = \bigl\{ \ \zeta_0 <  \zeta_1 <  \dots <  \zeta_j <  \dots <  \beta_\infty <  \dots <  \beta_j < \dots <  \beta_1 \ \bigr\},$$
completely analogously as it is done for  $\Theta_x$.
\item For $q=2s+1$, the labels are elements of the linearly ordered set
 $$
 \dE_y = \dE_y^{q} =  \bigl\{ \ \zeta_0 < \zeta_1 < \dots < \zeta_s < \beta_s < \dots < \beta_1 \ \bigr\}. \qquad \qquad
 $$
For any $1 \le j \le s$, the number of rows in blocks marked by $\zeta_j$ and $\beta_j$ is the same (and equal to $n_j$).
\item For $q=2s$, the labels are elements of an (only partially!) ordered set
$$ \qquad
\dE_y = \dE_y^{q} = \bigl\{ \
\zeta_0 < \zeta_1 < \dots <  \zeta_{s-1} <  \zeta_s^+ ,   \zeta_s^- <   \beta_{s-1} <  \dots < \beta_1 \ \bigr\}
$$
as shown in Figure \ref{Fig:even}
(the elements $\zeta_s^+$ and $\zeta_s^-$ are incomparable).
Again, the number of rows in blocks $\zeta_j$ and $\beta_j$ is the same for $1 \le j \le s-1$.
\end{itemize}
\item We can perform
any \emph{simultaneous} elementary  transformation of columns of $\Theta_x$ and $\Theta_y$.
\item Transformations of rows of $\Theta_x$ are of three types.
\begin{itemize}
\item We can add any multiple of any row with lower weight to any row with higher weight.
\item For any $i \in \mathbb{N}$ we can perform any \emph{simultaneous} elementary transformation of rows within blocks marked by
$\xi_i$ and $\alpha_i$.
\item We can make any elementary transformation of rows in block $\xi_0$ or $\alpha_\infty$.
\end{itemize}
\item The transformation rules for rows of $\Theta_y$ \textit{depend on the parity} of $q$.
\begin{itemize}
 \item Let us take the case $q$ is even (the most complicated one, see Figure \ref{Fig:even}).
 \begin{itemize}
 \item We can add any multiple of any row with lower weight to any row with higher weight.
 \item For any $1 \le j \le s-1$ we may perform any \emph{simultaneous} elementary transformation of rows within blocks marked by
$\zeta_j$ and $\beta_j$.
\item We can make any (independent)  elementary transformation of rows in the block $\zeta_0$ or $\zeta_s^\pm$.
 \end{itemize}
 \item
For $q = \infty$, the transformation rules for $\Theta_y$ are analagous to those listed above for $\Theta_x$ 
(The matrix problem for this case will be studied in much detail in Subsection \ref{dXofP}).
\item
For odd $q=2s+1$, the transformation rules of $\Theta_y$  are the same as for $q= \infty$. The only difference between these cases lies in the absence of certain symbols in $\dE_y$.
\end{itemize}
\end{itemize}
\noindent
4.~ For any $q \in \mathbb{N}_{\geq 2} \cup \{\infty\}$ the described matrix problem is an example of  \textsl{representations of a bunch of semi--chains}. Tameness of the latter class of problems has been shown by Bondarenko in \cite{bo1}. This implies the tameness of the category of triples $\Tri(\RA)$. Tameness of $\CM(\RA)$ follows from Theorem \ref{T:BDdimOne}.
\end{proof}

\begin{remark} \label{semichains} Consider the following combinatorial data:
\begin{itemize}
\item The index set $I = \{x, y\}$.
\item Let $\dF_x = \{\gamma\}$, $\dF_y = \{\delta\}$, $\dF = \dF_x \cup \dF_y$.
\item Let $\dE_x$ and $\dE_y = \dE_y^q$ be as above, $\dE = \dE_x \cup \dE_y$.
\item In the set $\dX = \dE \cup \dF$ consider the symmetric  relation $\sim$ defined as follows:
\begin{align*}
\gamma \sim \delta, \qquad &\xi_i \sim \alpha_i \quad \text{ for } i \in \mathbb{N}^\ast, \\
&\zeta_j \sim \beta_j \quad \text{ for }
\begin{cases}
1 \le j \le s & \text{if }q=2s \text{ or }q=2s+1, \\
j \in \NN & \text{if }q= \infty.
\end{cases}
\end{align*}
\end{itemize}
The entire data (which is an example of a bunch of semi--chains)  defines a certain \emph{bimodule category} $\Rep(\dX)$, see \cite{DrozdLOMI} or
\cite{BDNonIsol} for more details.
The description of isomorphy classes of objects of $\Rep(\dX)$ reduces   precisely to the matrix problem described above.
In our  particular case, the category $\Rep(\dX)$ admits the following intrinsic description:
it is the \emph{comma category} of the following diagram of categories and functors:
$$
\overline{\CM}(\RR_x) \times \overline{\CM}(\RR_y) \xrightarrow{\mathsf{For}} \mod(\kk \times \kk)
\xleftarrow{(\kk \times \kk) \otimes_\kk \,-\,} \mod(\kk),
$$
where $\overline{\CM}(\RR_x)$ and  $\overline{\CM}(\RR_y)$  have been  defined in Definition \ref{D:CMstabilized} (see also
Lemma \ref{L:CMstabilized} for their explicit description) and
 $\mathsf{For}$ is the forgetful functor. According to Bondarenko  \cite{bo1}, there are the following types of indecomposable objects in $\Rep(\dX)$: bands (continuous series) and (bispecial, special and usual) strings (discrete series).
The precise combinatorics of the discrete series  is rather complicated.
\end{remark}

\begin{definition}
The forgetful functor $\PP\!: \Tri(\RA) \longrightarrow \Rep(\dX)$ assigns
 assigns to a triple $\bigl(N, V, (\Theta_x, \Theta_y)\bigr)$ the pair of partitioned matrices $(\Theta_x, \Theta_y)$. To be more precise, we recall that
\begin{itemize}
\item $N = N_x \oplus N_y = \bigl(\bigoplus_{i} X_i^{\oplus m_i}\bigr) \oplus \bigl(\bigoplus_{j} Y_j^{\oplus n_j}\bigr)$ where $m_i, n_j \in \mathbb{N}_0$.
\item $V = \kk^t$ for some $t \in \mathbb{N}_0$.
\item $\Theta_x\!: V = \kk^t \longrightarrow \bar\RR \otimes_{\RR} N_x \cong \kk^{m}$,\\ $\Theta_y\!: V = \kk^t \longrightarrow \bar\RR \otimes_{\RR} N_y \cong \kk^{n}$ for certain $m, n \in \mathbb{N}_0$.
\end{itemize}
At this point, we have chosen bases for $\bar\RR \otimes_{\RR} N_x$ respectively $\bar\RR \otimes_{\RR} N_y$ which are induced by the distinguished generators of the indecomposable modules over $R_x$ and $R_y$ as in the body of the proof of Theorem \ref{T:main1}.
These generators correspond exactly to the elements of the bunch of semi--chains $\dX$
indexing the horizontal stripes of $\Theta_x$ and $\Theta_y$.
\end{definition}
\begin{remark}
The functor $\PP$ has the following properties.
\begin{enumerate}
\item $\PP$ is additive, full and preserves isomorphism classes of objects.
\item The essential image of $\PP$ consists of all pairs of matrices $(A,B)$ in $\Rep(\dX)$ such that $A$ and $B$ have both full row rank and $\Bigl(\begin{smallmatrix}
\displaystyle A \\ \displaystyle B \end{smallmatrix}\Bigr)$ has full column rank.
\item $\PP$ is not faithful.
\end{enumerate}
\end{remark}

\begin{remark}\label{R:RonPQchar2}
Let $\mathsf{char}(\kk) = 2$. Then the simple curve singularities of type $\CA$ have to be redefined according to Remark \ref{R:remarkchar2}.
It follows that the equation of $\RP_{\infty, 2s}$ should be
$\kk\llbracket x, y, z\rrbracket/(xy, z(y^s -z))$.
Moreover, there are more singularities of type
$\RP_{\infty, 2s+1}$, namely
\begin{align*}
\RP_{\infty, 2s+1}^{t} := \kk \llbracket x,y,z\rrbracket/(xy, y^{2s+1} + y^{s+t} z - z^2 ), \qquad 1\leq t \leq s-1.
\end{align*}
Nevertheless, they are all tame and the proof of Theorem \ref{T:main1} applies literally to this case as well.
\end{remark}

\begin{remark}\label{R:proofoftameness}
For any $q \in \mathbb{N} \cup \{\infty\}$ consider the hypersurface singularity
$$\RT = \RT_{\infty, q+2} = \kk\llbracket a, b\rrbracket/\bigl(b^2(a^2-b^{q})\bigr).$$
Observe that $\RA$ is an overring of $\RT$ via the embedding
\begin{align*}
\RT \lar \RA,  \qquad a \longmapsto x+v,
\quad b \longmapsto y+u
\end{align*}
where $\RA$ is the ring defined by (\ref{E:minoverring}).
It was shown in \cite[Section 11.1]{BDNonIsol} that
$\CM(\RT)$ has tame representation type (under the additional assumption  $\mathsf{char}(\kk) = 0$). This gives another argument
that $\CM(\RA)$ (and hence $\CM(\RP)$) has either tame or discrete Cohen--Macaulay type. The latter case does also occur:
if  $q=1$, then $\RT_{\infty 3}$ is representation tame whereas
$$
\RP_{\infty 1} = \kk\llbracket x, y, z\rrbracket/(xy, y-z^2) \cong \kk\llbracket x, z\rrbracket/(xz^2) =: \RD_\infty
$$
is  representation discrete \cite{BGS}.
\end{remark}

\section{\texorpdfstring{Cohen--Macaulay modules over $\RP_{\infty\infty}$ and $\RT_{\infty\infty}$}{Cohen--Macaulay modules over the maximal degenerations of type P and T}} \label{S:explizit}
In this section we shall explain that the technique of matrix problems, introduced in the course of the proof of Theorem \ref{T:main1}, leads to a completely explicit description of indecomposable Cohen--Macaulay modules over
$\RP = \RP_{\infty\infty} = \kk\llbracket x,y,z\rrbracket/(xy, z^2)$. Although $\RP$ is the ``maximal degeneration'' of the family
(\ref{E:Ppq}), the combinatorics of the indecomposable objects in $\CM(\RP)$ are  more transparent
 than for the less degenerate singularities  $\RP_{\infty, 2r}$.
The reason is that the underlying matrix problem has  the type \textsl{representations of bunches of chains} and not of \textsl{semi--chains} as for $\RP_{\infty, 2r}$.
  Another motivation to study Cohen--Macaulay modules over $\RP$ is that it allows to construct interesting examples of Cohen--Macaulay modules over the hypersurface singularity
$\RT = \RT_{\infty \infty} = \kk\llbracket a, b\rrbracket/(a^2 b^2)$.
Moreover, the explicit classification of indecomposable Cohen--Macaulay modules over $\RP$ yields a classification for any curve singularity of type $\RP_{\infty, 2s+1}$ or $\RP_{2r+1,2s+1}$.

\medskip
\noindent
Until the end of  this section we keep   the following notation:
\begin{itemize}
\item $\RA = \kk\llbracket x, y, u, v\rrbracket/(xy, yu, uv, vx, u^2, v^2)$ is the minimal overring of $\RP$.
The embedding $\RP \lar \RA$ sends $z$ to $u + v$.
\item Let $\RR = \RR_x \times \RR_y = \kk\llbracket x, u\rrbracket/(u^2) \times \kk\llbracket y, v\rrbracket/(v^2)$.
\item For any $l \in \NN$ we denote $X_l = (u, x^l)_\RR$ and $Y_l = (v, y^l)_\RR$. Next, we pose $X_0 = \RR_x$, $Y_0 = \RR_y$,
$X_\infty = (u)_\RR$ and $Y_\infty = (v)_\RR$.
\item Let
$\RQ = \kk\llbrace x\rrbrace[u]/(u^2) \times \kk\llbrace y\rrbrace[v]/(v^2)$.
\item Denote $\idm = (x, y, u, v)_\RA$. Recall that  $\idm = \mathsf{ann}_\RA(\RR/\RA) = \mathsf{rad}(\RR)$.
\end{itemize}
Observe  that $\RQ$ is the common total ring of fractions of $\RP, \RA$ and $\RR$. In particular, we have the following equalities in $\RQ$:
\begin{equation}
u = \frac{xz}{x+y}, \quad v = \frac{yz}{x+y}, \quad
e_x :=  (1, 0) = \frac{x}{x+y} \quad \mbox{and} \quad  e_y := (0, 1) = \frac{y}{x+y}.
\end{equation}
Next, note that $\RA$ is also an overring of $\RT$. Indeed, we have an injective ring homomorphism
$\jmath\!:  \RT \lar \RA$ given by $a \mapsto x + v$ and $b \mapsto y+u$. It is also not difficult to see that $\jmath$ induces an isomorphism of the total rings of fractions
$\RQ(\RT) \lar \RQ(\RA) = \RQ$. Summing up, we have the following diagram of categories and functors:
\begin{equation}
\begin{array}{c}
\xymatrix@M+1pt{
\CM(\RP) & & & \\
& \CM(\RA) \ar@{}[r]|-{\sim} \ar@/^/[r]^-{\FF} \ar@{_{(}->}[lu]_-{\II} \ar@{^{(}->}[ld]^-{\JJ} & \Tri(\RA) \ar[r]^{\PP} \ar@/^/[l]^-{\GG} & \Rep(\dX) \\
\CM(\RT) & & &
}
\end{array}
\end{equation}

\begin{itemize}
\item $\II$ and $\JJ$ denote restriction functors. According to Proposition \ref{P:overrings}, they are both fully faithful.
Moreover, by Theorem \ref{T:RejectionLemma} $\mathsf{Ind}\bigl(\CM(\RP)\bigr) = \{\RP\} \cup \mathsf{Ind}\bigl(\CM(\RA)\bigr)$.
\item $\FF$ and $\GG$ are  quasi--inverse equivalences of categories from  Theorem \ref{T:BDdimOne}.
\item The functor $\PP$ assigns to a triple $\bigl(N, V, (\Theta_x, \Theta_y)\bigr)$ the pair of matrices $(\Theta_x, \Theta_y)$, whose rows are equipped with some additional ``weights''. $\PP$ preserves isomorphy classes of objects as well as their indecomposability. However, $\PP$ is not essentially surjective because $\Theta_x$ and $\Theta_y$ obey some additional constraints, see (\ref{E:regular}).
\end{itemize}
The goal of this section is to show how one can translate the combinatorics of indecomposable objects of $\Rep(\dX)$ into an
explicit description of indecomposable objects of $\CM(\RP)$ and $\CM(\RT)$.

\subsection{\texorpdfstring{Indecomposable objects of $\Rep(\dX)$ }{Indecomposable objects of Rep(B)0}}\label{dXofP}

According to Theorem \ref{T:main1}, the matrix problem corresponding to a description of isomorphy classes of objects in $\rep(\dX)$ is as follows.

\medskip
\noindent We are given two matrices $\Theta_x$ and $\Theta_y$ as depicted in Figure \ref{fig:MatProb} with entries from an algebraically closed field $\kk$ and the same number of columns.
The rows of $\Theta_x$ and $\Theta_y$ are both divided into horizontal blocks.
Any two horizontal blocks in $\Theta_x$ (respectively $\Theta_y$)   connected by a dotted line  have the same number of rows.

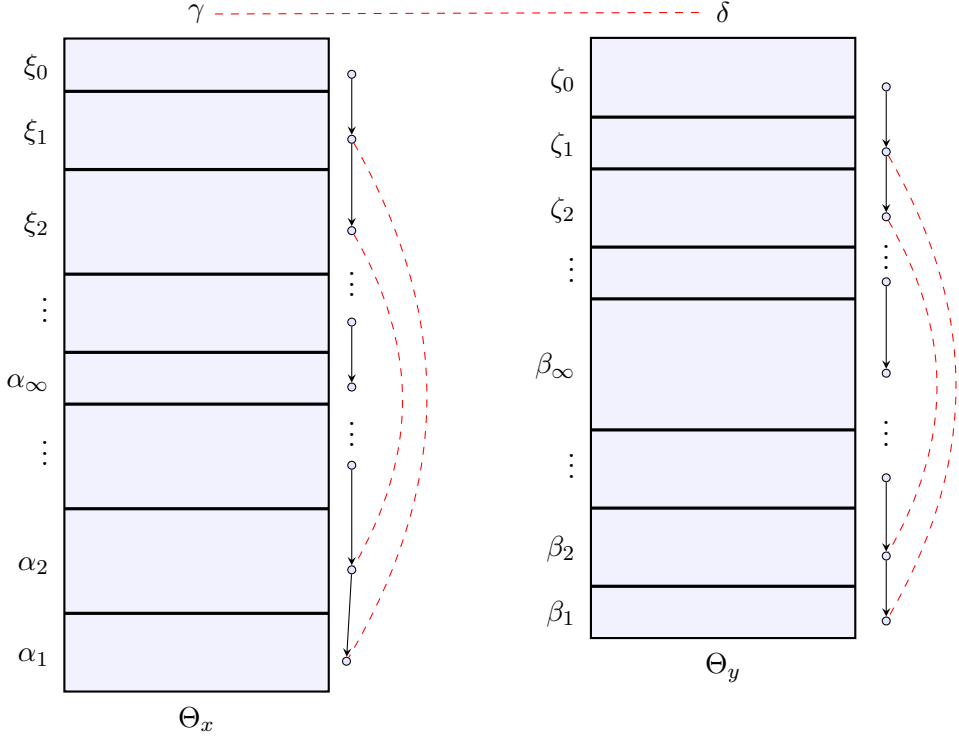
\begin{figure}
\begin{tikzpicture}[dot/.style={fill=blue!10,circle,draw, inner sep=1pt, minimum size=3pt}]


\matrix[tbl5,text width=100pt, minimum height=20pt,
row 2/.style={minimum height=30pt},
row 3/.style={minimum height=40pt},
row 4/.style={minimum height=30pt},
row 6/.style={minimum height=40pt},
row 7/.style={minimum height=40pt},
row 8/.style={minimum height=30pt},
name=table] at(0,0)
{
~\\
~\\
~\\
~\\
~\\
~\\
~\\
~\\
};

\sfrm{table}{8}{1};
\foreach \i in {1,..., 7}
{
\hsline{table}{\i}{1};
\node[dot,base right=7pt of table-\i-1](n\i){};
}

\node[dot,base right=5pt of table-8-1](n8){};

\draw[-stealth](n1) to (n2); \draw[-stealth](n2) to (n3);
\path (n3) to node{$\vdots$} (n4);
\draw[-stealth](n4) to (n5);
\path (n5) to node{$\vdots$} (n6);
\draw[-stealth](n6) to (n7); \draw[-stealth](n7) to (n8);

 \draw[dashed, red] (n2) to[bend left] (n8); \draw[dashed, red] (n3) to[bend left] (n7);

\node[above=2pt of table-1-1.north](g){$\gamma$};
\node[below=2pt of table-8-1.south]{$\Theta_x$};

\node[base left=2pt of table-1-1]{{$\xi_{0}$}};
\node[base left=2pt of table-2-1]{$\xi_{1}$};
\node[base left=2pt of table-3-1]{$\xi_{2}$};
\node[base left=2pt of table-4-1]{$\vdots$};
\node[base left=2pt of table-5-1]{$\alpha_{\infty}$};
\node[base left=2pt of table-6-1]{$\vdots$};
\node[base left=2pt of table-7-1]{$\alpha_{2}$};
\node[base left=2pt of table-8-1]{$\alpha_{1}$};
%

\matrix[tbl5,text width=100pt, minimum height=20pt,
row 1/.style={minimum height=30pt},
row 3/.style={minimum height=30pt},
row 5/.style={minimum height=50pt},
row 6/.style={minimum height=30pt},
row 7/.style={minimum height=30pt},
name=table] at(7, 0.36)
{
~\\
~\\
~\\
~\\
~\\
~\\
~\\
~\\
};

\sfrm{table}{8}{1};
\foreach \i in {1,..., 7}
{
\hsline{table}{\i}{1};
\node[dot,base right=10pt of table-\i-1](n\i){};
}

\node[dot,base right=10pt of table-8-1](n8){};

\draw[-stealth](n1) to (n2);\draw[-stealth](n2) to (n3);
\path (n3) to node{$\vdots$} (n4);
\draw[-stealth](n4) to (n5); \path (n5) to node{$\vdots$} (n6);
\draw[-stealth](n6) to (n7); \draw[-stealth](n7) to (n8);

 \draw[dashed, red] (n2) to[bend left] (n8);
 \draw[dashed, red] (n3) to[bend left] (n7);

\node[above=2pt of table-1-1.north](d){$\delta$};
\node[below=2pt of table-8-1.south]{$\Theta_y$};

\draw[dashed, red] (g) to (d);
\node[base left=2pt of table-1-1]{{$\zeta_{0}$}};
\node[base left=2pt of table-2-1]{$\zeta_{1}$};
\node[base left=2pt of table-3-1]{$\zeta_{2}$};
\node[base left=2pt of table-4-1]{$\vdots$};
\node[base left=2pt of table-5-1]{$\beta_{\infty}$};
\node[base left=2pt of table-6-1]{$\vdots$};
\node[base left=2pt of table-7-1]{$\beta_{2}$};
\node[base left=2pt of table-8-1]{$\beta_{1}$};
\end{tikzpicture}
\caption{Matrix problem for the case $q= \infty$}
\label{fig:MatProb}
\end{figure}

\medskip
\noindent
\textsl{Transformation rules}.
The following transformations of columns and rows of $\Theta_x$ and $\Theta_y$ are admissible:
\begin{itemize}
\item any \emph{simultaneous} elementary  transformation of columns of $\Theta_x$ and $\Theta_y$.
\item addition of  any multiple of any row of $\Theta_x$ (respectively, $\Theta_y$)  with lower weight to any row of $\Theta_x$ (respectively, $\Theta_y$) with higher weight.
\item any \emph{simultaneous} elementary transformation of rows within horizontal blocks  of $\Theta_x$ (respectively, $\Theta_y$) connected by a dotted line.
\item any elementary transformation of rows in the  horizontal block of $\Theta_x$ (respectively, $\Theta_y$) which is not connected to any other block by a dotted line.
\end{itemize}

\noindent Additionally, there are the following  \textsl{regularity constraints} on $\Theta_x$ and $\Theta_y$:
\begin{equation}\label{E:regular}
\begin{array}{l}
\mbox{$\Theta_x$ and $\Theta_y$ have both full row rank}, \\
\mbox{the matrix  $\Bigl(\begin{smallmatrix}\displaystyle \Theta_x \\ \displaystyle \Theta_y\end{smallmatrix}\Bigr)$
 has full column rank}.
\end{array}
\end{equation}
\noindent
In this subsection we are going to apply Bondarenko's result \cite{bo1} to describe the canonical forms of the matrix problem above.

\begin{definition}\label{D:distinguishedbuch} Consider the following data:
\begin{itemize}
\item The index set $I = \{x, y\}$.
\item The set of column symbols $\dF = \dF_x \cup \dF_y$, where $\dF_x = \{\gamma\}$, $\dF_y = \{\delta\}$.
\item The set of row symbols $\dE = \dE_x \cup \dE_y$, where
\begin{align*}
\dE_x &= \bigl\{ \
 \xi_0 < \xi_1 < \dots < \xi_i < \dots < \alpha_\infty < \dots < \alpha_i < \dots < \alpha_1 \ \bigr\}, \\
\dE_y &= \bigl\{ \ \zeta_0 <  \zeta_1 <  \dots <  \zeta_j <  \dots <  \beta_\infty <  \dots <  \beta_j < \dots <  \beta_1 \ \bigr\}.
\end{align*}
\item The set $\dX = \dE \cup \dF$ is equipped with a symmetric relation $\sim$ defined as follows:
\begin{align*} \gamma \sim \delta, \qquad \xi_l \sim \alpha_l \quad \text{and} \quad \zeta_l \sim \beta_l \quad \text{for $l \in \mathbb{N}^\ast$}.
\end{align*}
\end{itemize}
In addition, we introduce another symmetric relation $-$ on the set $\dX$ as follows:
\begin{align*}
\gamma - \epsilon_x\text{ for any }\epsilon_x \in \dE_x \quad \text{ and }\quad \delta - \epsilon_y \text{ for any }
\epsilon_y \in \dE_y.
\end{align*}
\end{definition}

\noindent
The problem to classify the indecomposable objects of the category $\Rep(\dX)$
up to isomorphism
is exactly the matrix problem above without the ``regularity conditions'' (\ref{E:regular}).

\medskip
\noindent
Now, we define \emph{strings} and \emph{bands} of the bunch of chains $\dX$. They describe the invariants of the indecomposable  representations in $\Rep(\dX)$.
\begin{definition}\label{string1} Let $\dX$ be the bunch of chains from Definition \ref{D:distinguishedbuch}.
\begin{enumerate}
\item
A \emph{full word} $w$ of $\dX$ is a sequence
$$w = \chi_1 \rho_1 \chi_2 \rho_2 \ldots \chi_{n-1} \rho_{n-1} \chi_n$$
 of symbols $\chi_k \in \dX$ and relations $\rho_k \in \{\sim, -\}$ subject to the following conditions:
 \begin{itemize}
 \item the relation $\chi_k \rho_k \chi_{k+1}$ holds  in $\dX$ for $1 \leq k \leq n-1$.
 \item the sequence of relations alternates, i.e. $\rho_k \neq \rho_{k+1}$ for $1 \leq k \leq n-2$.
 \item either $\chi_1 \in \{ \xi_0, \alpha_\infty, \zeta_0, \beta_\infty\}$ or $\rho_1$ is $\sim$.
\item either $\chi_n \in \{ \xi_0, \alpha_\infty, \zeta_0, \beta_\infty \}$ or $\rho_{n-1}$ is  $\sim$.
\end{itemize}
\item The \emph{opposite word} of a full word $w$ as above is defined by
$$w^{o} = \chi_{n} \rho_{n-1} \chi_{n-1} \rho_{n-2} \ldots \chi_{2} \rho_{1} \chi_1.$$
\item A \emph{string datum} of $\dX$ is given by any full word $w$.
\item Two string data $w$ and $w'$ are \emph{equivalent} if and only if $w'=w$ or $w' = w^{o}$.
\end{enumerate}
\end{definition}
\begin{definition}\label{band1} Let $\dX$ be the bunch of chains from Definition \ref{D:distinguishedbuch}.
\begin{enumerate}
\item A \emph{cyclic word} $w$ is given by a full word $w=\chi_1 \rho_1 \chi_2 \ldots \chi_{n-1} \rho_{n-1} \chi_n$ and an additional relation $\rho_{n}$ equal to $-$ such that the following additional conditions are satisfied:
\begin{itemize}
\item $\chi_n \rho_n \chi_1$ holds in $\dX$.
\item $\rho_1$ and $\rho_{n-1}$ are equal to $\sim$.
\end{itemize}
\item
The opposite of a cyclic word as above is given by the full word $w^{o}$ together with the additional relation $\rho_n=-$.
\item For any even integer $k \in \mathbb{Z}/ n \mathbb{Z}$ the \emph{$k$-th shift} of $w$ is defined by
$$w^{[k]} = \chi_{k+1} \rho_{k+1} \chi_{k+2} \rho_{k+2} \ldots \chi_{k+n-1} \rho_{k+n-1} \chi_{k+n} \rho_{k+n}, $$
where the indices are considered modulo $n$.
\item A cyclic word $w$ is \emph{periodic} if $w = w^{[k]}$ for some even non-trivial shift
$k\in \bigl(\mathbb{Z} / n \mathbb{Z}\bigr)^\ast$.
\item A \emph{band datum} $(w, m, \lambda)$ of $\dX$ consists of a non--periodic cyclic word $w$, a multiplicity parameter $m \in \NN$ and a continuous parameter $\lambda \in \kk^\ast$.
\item
Two band data $(w,m ,\lambda)$ and $(w',m', \lambda')$ are \emph{equivalent} if and only if both words $w$ and $w'$ have the same length $n$ and
if $(w',m', \lambda')$ is given by
$(w^{o}, m , \lambda)$, $(w^{[4l]},m, \lambda)$ or
$(w^{[4l+2]}, m, \lambda^{-1})$ for some $l \in \mathbb{Z} / n \mathbb{Z}$.
\end{enumerate}
\end{definition}

\noindent
The above  definitions are motivated by the following result of Bondarenko \cite{bo1}.
\begin{theorem}\label{T:BondarenkoClassific}
There is a bijection between the equivalence classes of string and band data of the bunch of chains $\dX$ and the isomorphism classes of indecomposable objects in the category $\Rep(\dX)$.
\end{theorem}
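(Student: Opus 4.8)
The plan is to obtain the statement from Bondarenko's general classification of representations of bunches of semi-chains \cite{bo1}; since for $q=\infty$ the datum $\dX$ is a genuine bunch of \emph{chains}, the essential work is to verify that $\dX$ and the matrix problem of Subsection \ref{dXofP} fit Bondarenko's framework, and that the invariants produced by his theorem are exactly the equivalence classes of string and band data introduced in Definitions \ref{string1} and \ref{band1}.

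First I would check the structural hypotheses. The sets $\dE_x, \dE_y$ are linearly ordered chains and $\dF_x = \{\gamma\}$, $\dF_y = \{\delta\}$ are singleton chains; the relation $\sim$ is a symmetric pairing which matches $\xi_l$ with $\alpha_l$, $\zeta_l$ with $\beta_l$, and $\gamma$ with $\delta$, while leaving the four special symbols $\xi_0, \alpha_\infty, \zeta_0, \beta_\infty$ unpaired, and the relation $-$ records the incidences between column symbols and row symbols. Comparing with the transformation rules of Subsection \ref{dXofP} — simultaneous column operations, row additions along the linear orders, simultaneous row operations inside $\sim$-linked blocks and free row operations inside the unpaired blocks — this is precisely Bondarenko's matrix problem for the bunch of chains $\dX$. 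It therefore suffices to translate his canonical forms into the combinatorial language above.

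The heart of the argument is Bondarenko's reduction algorithm. Processing the chain symbols in order and using the admissible operations, one brings the pair $(\Theta_x, \Theta_y)$ to a canonical block form whose nonzero blocks are identities; their positions trace a collection of walks through $\dX$ in which the relations $\sim$ and $-$ necessarily alternate. A walk that terminates at an unpaired special symbol (or wherever the final relation is forced to be $\sim$) yields an indecomposable \emph{string}, and one checks that the boundary and alternation conditions of Definition \ref{string1} are exactly the conditions singling out such maximal walks. A walk that closes into a cycle yields a \emph{band}: the continuous parameter $\lambda \in \kk^\ast$ is the eigenvalue of the cyclic operator built from the reduction, the multiplicity $m$ is the size of a Jordan block, and non-periodicity of the cyclic word (Definition \ref{band1}(4)) is what guarantees indecomposability, a periodic word decomposing as a sum of shorter bands.

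I expect the main obstacle to be the bookkeeping of the equivalence relations rather than the existence of the canonical form. Reading a walk backwards produces the opposite word $w^o$, and rotating the base point of a cyclic word by an even shift reshuffles which $-$ relation is the ``closing'' one; I would need to show that these dihedral symmetries of the canonical form produce precisely the identifications of Definition \ref{band1}(6), including the delicate rule that the shifts $w^{[4l]}$ and the reversal $w^o$ preserve $\lambda$ whereas the half-shifts $w^{[4l+2]}$ replace it by $\lambda^{-1}$. Establishing that these are the \emph{only} coincidences — that is, that distinct equivalence classes yield non-isomorphic indecomposables — is the faithfulness half of the bijection and the step demanding the most care, since it requires the canonical form to be a complete isomorphism invariant under the full symmetry group of the problem.
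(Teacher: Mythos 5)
Your proposal is correct and takes essentially the same route as the paper: the paper gives no independent proof of this statement at all, but presents it as a direct citation of Bondarenko's classification theorem for bunches of (semi-)chains \cite{bo1}, with Definitions \ref{string1} and \ref{band1} calibrated so that the string/band data are exactly Bondarenko's invariants for the particular bunch of chains $\dX$ of Definition \ref{D:distinguishedbuch}. Your verification that the transformation rules match Bondarenko's matrix problem, and your outline of how his reduction algorithm and equivalence bookkeeping yield the stated bijection, is if anything more detailed than the paper's own treatment, which delegates all of this to \cite{bo1}.
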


\noindent
Now we  explain   Bondarenko's construction of indecomposable objects in $\Rep(\dX)$ corresponding to
 a string or band datum.

\medskip
\noindent 1.~Let $w$ be a string datum of $\dX$.
The corresponding  object $\Theta(w)$ of $\Rep(\dX)$  is given by a pair of  matrices $\Theta_x(w)$ and $\Theta_y(w)$ defined  as follows:
\begin{enumerate}
\item Let $t$ be the number of times the symbol $\gamma$ (or $\delta$) occurs
as a letter in $w$. Then both matrices $\Theta_x(w)$ and $\Theta_y(w)$ have $t$ columns.
\item For each $\epsilon \in \dE$,
let $m_\epsilon$ be the number of times the symbol $\epsilon$ occurs as a letter in $w$.
Then the horizontal block $\epsilon$ in $\Theta_x(w)$ (respectively  $\Theta_y(w)$)   has
$m_\epsilon$ rows.
\item Next, we assign to every letter $\chi_k$ in $w$ the number of times the letter $\chi_k$ occurred in the subword $\chi_1 \rho_1 \ldots \rho_{k-1} \chi_k$.
In other words, we number every letter in $w$ by the time it occurs in $w$.
\item Every appearance of the relation $-$ in $w$ contributes to a non-zero entry in $\Theta(w)$ in the following way.
Let $\epsilon - \nu$ or $\nu - \epsilon$ be a subword in $w$ such that $\epsilon \in \dE$ and $\nu \in \dF$.
Let $i$ be the occurrence number of $\epsilon$ and $j$ be the occurrence number
of $\nu$.
We fill the entries of $\Theta_x$ respectively $\Theta_y$ according to  the following rule.
\begin{itemize}
\item If $\nu = \gamma$ (respectively $\nu = \delta$), the $(i, j)$-th entry of the $\epsilon$-th horizontal block of $\Theta_x$ (respectively $\Theta_y$) is set to be $1$.
This rule is applied for every relation $-$ in $w$.
\item All remaining entries of $\Theta_x$ and $\Theta_y$ are set to be $0$.
\end{itemize}
\end{enumerate}

\medskip
\noindent
2.~Let $(w,m,\lambda)$ be a band datum.
The corresponding object   $\Theta(w, m, \lambda)$ of $\Rep(\dX)$ is given by a pair of matrices
$\bigl(\Theta_x(w, m, \lambda),\Theta_y(w, m, \lambda)\bigr)$  defined as follows.
\begin{enumerate}
\item First construct  the canonical form  $\Theta(w)$, where $w$ is viewed as a string parameter.
\item Replace any zero entry in $\Theta_x(w)$ (respectively $\Theta_y(w)$)   by the zero matrix of size $m$ and
any identity entry in $\Theta_x(w)$ (respectively $\Theta_y(w)$) by the identity matrix $I$ of size $m$.
\item We consider the last relation $\rho_n=-$ in $w$.
Then $\chi_n \rho_n \chi_1$ in $\dX$ and either $\chi_1$ or $\chi_n \in \dE$.
Replace the zero matrix at the intersection of the first $m$ rows of the horizontal block indexed by $\chi_1$ (respectively $\chi_n$) and its last $m$ columns by the Jordan block $J_m(\lambda)$ with eigenvalue $\lambda$ and size $m$.
\end{enumerate}

\medskip
\begin{remark}\label{R:EssentImage}
Let $\Theta$ be the object of $\Rep(\dX)$ associated to a string or band datum.
It is straightforward to show, that $\Theta$ satisfies the  regularity constraints (\ref{E:regular})
if and only if the following two conditions are satisfied:
\begin{itemize}
\item either $w$ is cyclic or $w$ begins and ends with symbols from $\dF$.
\item $w$ is neither equal to $\gamma \sim \delta$ nor equal to $\delta \sim \gamma$.
\end{itemize}
\end{remark}

\noindent
Next, we give some examples of canonical forms of band and string data.
\begin{example} \label{ex1} Consider a band datum $(w,m,\lambda)$ where $w$ is given by the following cyclic word:
$$
w = 
\underset{\# 1 \ \ \ \#1}{\delta \sim \gamma} -
\underset{\# 1 \ \ \ \#1 }{\xi_{i} \sim \alpha_{i}} -
\underset{\# 2 \ \ \ \# 2}{\gamma \sim \delta} -
\underset{\# 1 \ \ \ \# 1}{\zeta_{j_1} \sim \beta_{j_1}} -
\underset{\# 3 \ \ \ \# 3}{\delta \sim \gamma} -
\underset{\# 2 \ \ \ \# 2}{\alpha_{i} \sim \xi_{i}} -
\underset{\# 4 \ \ \ \# 4}{\gamma \sim \delta} -
\underset{\# 1 \ \ \ \# 1}{\zeta_{j_2} \sim \beta_{j_2}} -
$$
Then the corresponding canonical forms $\bigl(\Theta_x(w,m,\lambda), \Theta_y(w,m,\lambda)\bigr)$ are the following:
\begin{center}
\begin{tikzpicture}

\matrix[tbl5,text width=20pt, minimum height=20pt,  name=table] at(0,0)
{
I&      0&     0&     0\\
0&      0&     0&     I\\
0&      I&     0&     0\\
0&      0&     I&     0\\
};

\sfrm{table}{4}{4};
\hdotline{table}{1}{4};
\hdline{table}{2}{4};
\hdotline{table}{3}{4};
\vdotline{table}{4}{1};
\vdotline{table}{4}{2};
\vdotline{table}{4}{3};

\node[above=2pt of table-1-2.north east]{$\gamma$};
\node[base left=2pt of table-1-1.south west]{{$\xi_{i}$}};
\node[base left=2pt of table-3-1.south west]{$\alpha_{i}$};
%

\matrix[tbl5,text width=20pt, minimum height=20pt,  name=table] at(4,0)
{
0&      I&     0&     0\\
0&      0&     0&     I\\
0&      0&     I&     0\\
J&      0&     0&     0\\
};

\sfrm{table}{4}{4};
\sfrm{table}{4}{4};
\hdline{table}{1}{4};
\hdline{table}{2}{4};
\hdline{table}{3}{4};
\vdotline{table}{4}{1};
\vdotline{table}{4}{2};
\vdotline{table}{4}{3};

\node[above=2pt of table-1-2.north east]{$\delta$};
\node[base right=2pt of table-1-4]{{$\zeta_{j_1}$}};
\node[base right=2pt of table-2-4]{$\zeta_{j_2}$};
\node[base right=2pt of table-3-4]{$\beta_{j_1}$};
\node[base right=2pt of table-4-4]{$\beta_{j_2}$};
\end{tikzpicture}
\end{center}
Here, $I$ is the identity matrix of size $m$ and $J = J_m(\lambda)$ is the Jordan block of size $m$ with eigenvalue $\lambda \in \kk^\ast$.
\end{example}

\begin{example} \label{ex2} Consider the string datum given by the word
$$
w = \delta \sim \gamma - \xi_i \sim \alpha_i - \gamma \sim \delta - \zeta_j \sim \beta_j - \delta \sim \gamma.
$$
Then the corresponding canonical forms $\bigl(\Theta_x(w), \Theta_y(w)\bigr)$ are the following:
\begin{center}
\begin{tikzpicture}

\matrix[tbl5,text width=15pt, minimum height=15pt,  name=table] at(0,0)
{
      1&     0&     0\\
      0&     1&     0\\
};

\sfrm{table}{2}{3};
\node[above=2pt of table-1-2]{$\gamma$};
\node[base left=2pt of table-1-1]{{$\xi_{i}$}};
\node[base left=2pt of table-2-1]{$\alpha_{i}$};

\matrix[tbl5,text width=15pt, minimum height=15pt,  name=table] at(3,0)
{
      0&     1&     0\\
      0&     0&     1\\
};

\sfrm{table}{2}{3};

\node[above=2pt of table-1-2]{$\delta$};
\node[base right=2pt of table-1-3]{{$\zeta_{j}$}};
\node[base right=2pt of table-2-3]{$\beta_{j}$};

\end{tikzpicture}
\end{center}
\end{example}

\begin{example} \label{ex3} Consider the string datum given by the word
$$
w = \xi_0 - \gamma \sim \delta - \zeta_{j_1} \sim \beta_{j_1} - \delta \sim \gamma - \xi_i \sim \alpha_i - \gamma \sim \delta
- \beta_{j_2} \sim \zeta_{j_2} - \delta \sim \gamma
$$
Then the corresponding canonical forms $\bigl(\Theta_x(w), \Theta_y\bigr(w))$ are the following:
\begin{center}
\begin{tikzpicture}

\matrix[tbl5,text width=15pt, minimum height=15pt,  name=table] at(0,0)
{
      1&     0&     0&      0\\
      0&     1&     0&      0\\
      0&     0&     1&      0\\
};

\sfrm{table}{3}{4};
\node[above=2pt of table-1-2.north east]{$\gamma$};
\node[base left=2pt of table-1-1]{{$\xi_{0}$}};
\node[base left=2pt of table-2-1]{{$\xi_{i}$}};
\node[base left=2pt of table-3-1]{$\alpha_{i}$};

\matrix[tbl5,text width=15pt, minimum height=15pt,  name=table] at(3,0)
{
      1&     0&     0&      0\\
      0&     0&     0&      1\\
      0&     1&     0&      0\\
      0&     0&     1&      0\\
};

\sfrm{table}{4}{4};

\node[above=2pt of table-1-2.north east]{$\delta$};
\node[base right=2pt of table-1-4]{{$\zeta_{j_1}$}};
\node[base right=2pt of table-2-4]{{$\zeta_{j_2}$}};
\node[base right=2pt of table-3-4]{$\beta_{j_1}$};
\node[base right=2pt of table-4-4]{$\beta_{j_2}$};
\end{tikzpicture}
\end{center}
\end{example}

\begin{example} \label{ex4} Consider the string datum given by the word
$$
w = \alpha_\infty - \gamma \sim \delta - \beta_j \sim \zeta_j - \delta \sim \gamma - \xi_i \sim \alpha_i - \gamma \sim \delta - \beta_\infty.
$$
Then the corresponding canonical forms $\bigl((\Theta_x(w), \Theta_y(w)\bigr)$ are the following:

\begin{center}
\begin{tikzpicture}

\matrix[tbl5,text width=15pt, minimum height=15pt,  name=table] at(0,0)
{
      0&     1&     0\\
      1&     0&     0\\
      0&     0&     1\\
};

\sfrm{table}{3}{3};
\node[above=2pt of table-1-2]{$\gamma$};
\node[base left=2pt of table-1-1]{$\xi_{i}$};
\node[base left=2pt of table-2-1]{$\alpha_{\infty}$};
\node[base left=2pt of table-3-1]{$\alpha_{i}$};

\matrix[tbl5,text width=15pt, minimum height=15pt,  name=table] at(3,0)
{
      0&     1&     0\\
      0&     0&     1\\
      1&     0&     0\\
};

\sfrm{table}{3}{3};

\node[above=2pt of table-1-2]{$\delta$};
\node[base right=2pt of table-1-3]{{$\zeta_{j}$}};
\node[base right=2pt of table-2-3]{$\beta_{\infty}$};
\node[base right=2pt of table-3-3]{$\beta_{j}$};
\end{tikzpicture}
\end{center}
\end{example}

\subsection{\texorpdfstring{Indecomposable Cohen--Macaulay modules over $\RP_{\infty \infty}$}{Indecomposable CM modules over the maximal degeneration of type P}}
Consider an object  $\Tau = \bigl(N, V, (\Theta_x, \Theta_y)\bigr)$ of the category $\Tri(\RA)$. Then we may assume that
\begin{itemize}
\item $N = X_{i_1} \oplus \dots \oplus X_{i_k} \oplus Y_{j_1} \oplus \dots \oplus Y_{j_l} \subseteq \RR^{k+l}$ for
certain indices $i_1, \dots, i_k, j_1, \dots, j_l \in \mathbb{N}_{0} \cup \{\infty\}$, see the beginning of Section \ref{S:explizit} for the definition of $\RR$--modules $X_i$ and $Y_i$ for $i \in \mathbb{N}_0 \cup \{\infty\}$.
\item $V = \kk^t$ for some $t \in \NN_0$.
\end{itemize}
According to Theorem \ref{T:BDdimOne}, the corresponding Cohen--Macaulay $\RA$--module $M = \GG(\Tau)$ is determined
 by the following commutative diagram in $\mod(\RA)$:
 \begin{equation}
\begin{array}{c}
\xymatrix@M+1pt
{ 0 \ar[r] & \idm  \mM  \ar[r] \ar@{=}[d] & M \ar[r]^-{\sigma} \ar[d]   & \kk^t
\ar[d]^-{\tilde{\theta}} \ar[r] & 0 \\
0 \ar[r] & \idm  \mM  \ar[r] & \mM
\ar[r]^-{\pi} & N/\idm N
\ar[r] & 0.
}
\end{array}
\end{equation}

\begin{lemma}\label{generators1}
Let $\bigl\{e_1, \dots, e_t\bigr\}$ be the standard basis of $\kk^t$. For any $1 \le i \le t$ choose $w_i \in N$ such that
$\pi(w_i) = \tilde{\theta}(e_i)$. Then we have:
\begin{equation}\label{E:triplegivesmodule}
M = \bigl\langle w_1, \dots, w_t\bigr\rangle_\RA  \subseteq \RR^{k+l}
\end{equation}
and $t$ is the minimal number of generators of $M$.
\end{lemma}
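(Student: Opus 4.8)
The plan is to read $M = \GG(\Tau)$ off the defining commutative diagram and then exhibit $w_1,\dots,w_t$ as an $\RA$--generating set. Write $M' := \langle w_1,\dots,w_t\rangle_\RA \subseteq M \subseteq N \subseteq \RR^{k+l}$. The first step is essentially formal. Since $\tilde\theta$ is injective, the residues $\sigma(w_i)=\tilde\theta(e_i)$ form a $\kk$--basis of $\kk^t$, so they generate the cokernel of $\idm N \hookrightarrow M$; reading this off the top row $0 \to \idm N \to M \xrightarrow{\sigma} \kk^t \to 0$ gives $M = M' + \idm N$. Everything therefore reduces to proving that the radical term $\idm N$ is already contained in $M'$, and this is the one genuinely non--formal point.

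To control $\idm N$ I would exploit the product structure $\RR = \RR_x \times \RR_y$ together with the regularity constraint (\ref{E:regular}). Decompose $N = N_x \oplus N_y$ and $N/\idm N = (N/\idm N)_x \oplus (N/\idm N)_y$, and recall that $\Theta_x$ and $\Theta_y$ are exactly the two components of $\tilde\theta$ into $(N/\idm N)_x$ and $(N/\idm N)_y$. Because $\Theta_x$ has full row rank it is surjective, so for each distinguished generator $g$ of $N_x$ there are scalars $c_i \in \kk$ such that the $x$--component of $\sum_i c_i\tilde\theta(e_i)$ equals $\bar g$; lifting, $\sum_i c_i w_i = g + h + r$ with $h \in N_y$ and $r \in \idm N$. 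The key trick is to multiply by an element $a \in \mathsf{rad}(\RR_x) = e_x\idm$: since $e_x e_y = 0$ annihilates $h \in N_y$ and $a r \in \idm^2 N$, one obtains $a g \equiv a\sum_i c_i w_i \pmod{\idm^2 N}$, and the right--hand side lies in $\idm M'$. As $\mathsf{rad}(\RR_y)$ kills $N_x$, the full radical acts on $g$ through $e_x\idm$ alone, so $\idm g \subseteq \idm M' + \idm^2 N$; running the symmetric argument for the generators of $N_y$ and summing over all distinguished generators yields $\idm N \subseteq \idm M' + \idm^2 N$.

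The final step is a Nakayama argument. Setting $P := \idm N$ and $P' := \idm M' \subseteq P$, the previous inclusion reads $P = P' + \idm P$; as $P$ is finitely generated over the local ring $(\RA,\idm)$, Nakayama's lemma forces $\idm N = P = P' = \idm M' \subseteq M'$. Combined with $M = M' + \idm N$ this gives $M = M'$, which is (\ref{E:triplegivesmodule}). For minimality, $M = M'$ yields $\idm M = \idm N$, so $\sigma$ descends to an isomorphism $M/\idm M \cong \kk^t$; since $\RA$ is local with residue field $\kk$, the minimal number of generators of $M$ equals $\dim_\kk M/\idm M = t$, and the $w_i$ are a minimal generating system because their residues are a basis. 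The main obstacle is the middle paragraph: converting ``full row rank'' into the honest containment $\idm N \subseteq M'$, which is precisely where the splitting $\RR = \RR_x \times \RR_y$ and the orthogonality of the idempotents $e_x, e_y$ do the work.
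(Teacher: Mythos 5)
Your proof is correct, but it is genuinely more self-contained than the paper's. The paper disposes of the lemma in three lines: $w_i \in M$ holds by definition of $\GG$; the induced map $\bar\sigma\colon M/\idm M \lar \kk^t$ is an isomorphism --- a fact \emph{quoted} from the proof of \cite[Theorem 12.5]{BDNonIsol} --- hence $\bar w_1,\dots,\bar w_t$ is a basis of $M/\idm M$ and Nakayama's lemma finishes. The entire content of your middle paragraph, namely converting surjectivity of $\theta$ (the full row rank half of (\ref{E:regular}), which is automatic for every object of $\Tri(\RA)$ by Definition \ref{D:triples-on-curves}) into the containment $\idm N \subseteq \langle w_1,\dots,w_t\rangle_\RA$, is exactly the ingredient the paper outsources to that citation; supplying it makes your argument independent of \cite{BDNonIsol}, at the cost of length. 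Your execution is sound, but the detour through distinguished generators, the splitting of $\idm$ into $\rad(\RR_x)\oplus\rad(\RR_y)$ and the $\idm^2 N$ bookkeeping can be compressed to one line: surjectivity of $\theta$ says that the classes $\pi(w_i)$ generate $N/\idm N$ over $\bar\RR$, so $N = \langle w_1,\dots,w_t\rangle_\RR + \idm N$; Nakayama over the local ring $(\RA,\idm)$ (the module $N$ is finitely generated over $\RA$) yields $N = \langle w_1,\dots,w_t\rangle_\RR$, whence $\idm N = \idm\,\langle w_1,\dots,w_t\rangle_\RA \subseteq \langle w_1,\dots,w_t\rangle_\RA$ because $\idm = \rad(\RR)$ is an ideal of $\RR$. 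Finally, a cosmetic slip: $\sigma(w_i)$ equals $e_i$, not $\tilde\theta(e_i)$ (the latter lives in $N/\idm N$); the injectivity of $\tilde\theta$ is precisely what forces this equality, which is how you in fact use it, so nothing is lost.
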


\begin{proof}
By definition of $M$, for any $1 \le i \le t$ we have: $w_i \in M$. Next, the induced map
$\bar\sigma\!: M/\idm M \lar \kk^t$ is an isomorphism (see the proof of \cite[Theorem 12.5]{BDNonIsol}) and
$\bar\sigma(w_i) = e_i$. Hence,  $\bigl\{\bar{w}_1, \dots, \bar{w}_t\bigr\}$ is a basis of $M/\idm M$ and
(\ref{E:triplegivesmodule})  follows from Nakayama's Lemma.
\end{proof}

\begin{lemma}\label{L:comparison}
Let $\Tau = \bigl(N, \kk^t, (\Theta_x, \Theta_y)\bigr)$ be an indecomposable object of $\Tri(\RA)$ as above and
$M = \GG(\Tau) = \bigl\langle w_1, \dots, w_t\bigr\rangle_\RA \subseteq \RR^{k+l}$. Then the following results are true.
\begin{enumerate}
\item We have either
\begin{itemize}
\item  $\Tau \cong \Bigl(\RR, \kk, \bigl((1), (1)\bigr)\Bigr)$ (in this case $M\cong \RA$), or
\item $\II(M) = \bigl\langle w_1, \dots, w_t\bigr\rangle_{\RP} \subseteq \RR^{k+l}$.
\end{itemize}
In both cases we have:
$$
\JJ(M) = \bigl\langle w_1, \dots, w_t, vw_1, \dots, vw_t, uw_1, \dots, uw_t\bigr\rangle_{\RT} \subseteq \RR^{k+l}.
$$
\item The Cohen--Macaulay module $M$, respectively $\II(M)$ and $\JJ(M)$, is  locally free on the punctured spectrum of $\RA$, respectively $\RP$ and $\RT$, if and only if $N$ contains no direct summands isomorphic to $X_\infty$ or $Y_\infty$.
\end{enumerate}
\end{lemma}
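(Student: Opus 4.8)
The plan is to derive part~(1) from the structural results of Section~1 and part~(2) from a direct computation of rational envelopes over $\RQ$. For the dichotomy in part~(1), I would first invoke Theorem~\ref{T:BDdimOne}: since $\GG$ is an equivalence, indecomposability of $\Tau$ forces $M$ to be an indecomposable Cohen--Macaulay $\RA$--module, and Lemma~\ref{generators1} gives $M = \langle w_1, \dots, w_t\rangle_\RA$. Proposition~\ref{P:genrestrict}, applied to the Gorenstein ring $\RP$ and its minimal overring $\RA$, then yields exactly the alternative: either $M \cong \RA$ or $M = \langle w_1, \dots, w_t\rangle_\RP = \II(M)$. To identify the first case with the stated triple, I would compute $\FF(\RA)$ directly, using $\tilde\RA = \RR \boxtimes_\RA \RA = \RR$, $\bar\RA = \RA/\idm \cong \kk$, and the fact that the canonical gluing $\theta_\RA$ is the diagonal embedding $\kk \hookrightarrow \kk \times \kk = \bar\RR$; in the matrix encoding this is precisely the pair $\bigl((1),(1)\bigr)$.

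The formula for $\JJ(M)$ reduces to the single module--theoretic identity
\[
\RA = \RT + \RT\,u + \RT\,v
\]
inside $\RR = \RR_x \times \RR_y$, where $\RT$ is identified with its image $\jmath(\RT)$ and $u = (u,0)$, $v = (0,v)$. I would verify this on $\kk$--bases: from $a = (x,v)$ and $b = (u,y)$ one sees that $\jmath(\RT)$ already contains $(1,1)$, $(x,v)$, $(u,y)$ and all $(x^n,0), (0,y^n)$ with $n \ge 2$; that $\RT\,u$ spans the classes $(x^n u, 0)$ and $\RT\,v$ spans the classes $(0, y^n v)$; and that the two remaining degree--one generators are recovered as $(x,0) = (x,v) - v$ and $(0,y) = (u,y) - u$. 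Since $\JJ$ is the forgetful functor and $M = \sum_i \RA\, w_i$, applying this identity to each generator gives $\JJ(M) = \sum_i \bigl(\RT w_i + \RT u w_i + \RT v w_i\bigr) = \langle w_i, v w_i, u w_i \mid 1 \le i \le t\rangle_\RT$ at once.

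For part~(2) the plan is to pass, via Remark~\ref{R:locfree}, from local freeness on the punctured spectrum to projectivity of the rational envelope over the common total ring of fractions $\RQ = \RQ_x \times \RQ_y$, where $\RQ_x = \kk\llbrace x\rrbrace[u]/(u^2)$ and $\RQ_y = \kk\llbrace y\rrbrace[v]/(v^2)$. Since $M$, $\II(M)$ and $\JJ(M)$ share the same underlying module and the embeddings $\RP, \RT \subseteq \RA$ induce the same $\RQ$, the rational envelope computed over each of $\RP, \RA, \RT$ is the same $\RQ$--module $Q(M)$, so the three conditions coincide; moreover $Q(M) \cong Q(N) = \bigoplus_k Q(X_{i_k}) \oplus \bigoplus_m Q(Y_{j_m})$, as $N/M$ is annihilated by the conductor $\idm$, which contains the $\RQ$--unit $x+y$. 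The key computation is that $Q(X_i) \cong \RQ_x$ is free for every finite $i$ (the generator $x^i$ is a unit in $\RQ_x$), whereas $Q(X_\infty) = u\,\RQ_x \cong \RQ_x/(u) \cong \kk\llbrace x\rrbrace$ is the residue field of the Artinian local, non--semisimple ring $\RQ_x$, hence not projective; the analogous statements hold for $Y_j$ and $Y_\infty$. As a finite direct sum over $\RQ$ is projective if and only if each summand is, projectivity of $Q(M)$ is equivalent to the absence of any $X_\infty$ or $Y_\infty$ summand in $N$, which is the assertion.

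The main obstacle I anticipate is computational bookkeeping rather than conceptual difficulty: both the identity $\RA = \RT + \RT u + \RT v$ and the distinction between $Q(X_\infty)$ and the $Q(X_i)$ hinge on tracking the idempotent splitting $\RR = \RR_x \times \RR_y$ and recording which monomials survive after inverting $x$ and $y$. Once these explicit verifications are in place, both parts follow formally from the equivalence $\GG$, Proposition~\ref{P:genrestrict} and Remark~\ref{R:locfree}.
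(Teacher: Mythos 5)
Your proposal is correct and follows essentially the same route as the paper's proof: part~(1) via Proposition~\ref{P:genrestrict} together with the identity $\RA = \RT + \RT u + \RT v$ (the paper phrases this as $\RA$ being generated by $1,u,v$ over $\RT$), and part~(2) via Remark~\ref{R:locfree} and the observation that $M$, $\II(M)$, $\JJ(M)$ and $N$ all have the same rational envelope over $\RQ$. The only difference is that you spell out the $\kk$--basis verification of the generation identity and the computation showing $Q(X_\infty) \cong \RQ_x/(u)$ is not projective, details which the paper leaves implicit.
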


\begin{proof} (1) The statement about $\II(M)$ is a corollary of Proposition \ref{P:genrestrict}. The statement about $\JJ(M)$ follows from the fact that $\JJ(\RT) = \langle 1, u, v \rangle_\RA \subset Q$ in $\RA-\mod$.

\smallskip
\noindent
(2) The Cohen--Macaulay module $M$ is locally free on the punctured spectrum of $\RA$ if and only $M \cong \RR \boxtimes_\RA M$
is locally free on the punctured spectrum of $\RR$, see Remark \ref{R:locfree}. The latter is equivalent  that $N$ contains no direct summands isomorphic to  $X_\infty$ and $Y_\infty$.  Since the rational envelopes of $M$, $\II(M)$ and $\JJ(M)$ are the same,
the result follows.
\end{proof}

\noindent
Now we are ready to state the final classification of the indecomposable Cohen--Macaulay $\RP$--modules.
For any $l \in \NN$ introduce the letters $\bx^{\pm}_l$ and $\by^{\pm}_l$
as well as $\bx_{\scriptscriptstyle  0}, \bx_\infty, \by_{\scriptscriptstyle  0}$ and $\by_{\scriptscriptstyle \infty}$.

\begin{definition}\label{band2}
 \emph{Band modules} $B = B(\omega, m, \lambda)$ have the following combinatorics:
\begin{itemize}
\item $\omega = \bx_{i_1}^{\sigma_1} \by_{j_1}^{\tau_1} \dots \bx_{i_n}^{\sigma_n} \by_{j_n}^{\tau_n}$ is a \emph{non--periodic} word, where $\sigma_k, \tau_k \in \{+, -\}$ and $i_k, j_k \in \NN$ for  $1 \le k \le n$.
\item $m \in \NN$ and $\lambda \in \kk^\ast$.
\end{itemize}
Consider the following Cohen--Macaulay $\RR$--module
$$
N := X_{i_1}^{\oplus m} \oplus Y_{j_1}^{\oplus m} \oplus \dots \oplus X_{i_n}^{\oplus m} \oplus Y_{j_n}^{\oplus m} \subseteq
\RR^{2mn}.
$$
Then $B$ is the following $\RA$--submodule of $N$:
\begin{equation}\label{E:band}
B:=
\left[
\begin{array}{ccccc}
\left(
\begin{array}{c}
f''_{i_1} I \\
g'_{j_1} I \\
0 \\
0 \\
\vdots \\
0 \\
0
\end{array}
\right),   &
\left(
\begin{array}{c}
0 \\
g''_{j_1} I \\
f'_{i_2} I \\
0 \\
\vdots \\
0 \\
0
\end{array}
\right), &  \dots, \, &
\left(
\begin{array}{c}
0 \\
0 \\
0 \\
0 \\
\vdots \\
f''_{i_n} I \\
g'_{j_n} I
\end{array}
\right), &
\left(
\begin{array}{c}
f'_{i_1} J \\
0 \\
0 \\
0 \\
\vdots \\
0\\
g''_{j_n} I
\end{array}
\right)
\end{array}
\right]_{\RA}
\subseteq N,
\end{equation}
where for any $1 \le k \le n$ the elements $f'_{i_k}, f''_{i_k}, g'_{j_k}$ $g''_{j_k}$ are defined by the tables:
\begin{equation}\label{E:generators}
\begin{array}{|c|c|c|}
\hline
\sigma_k & f'_{i_k} & f''_{i_k} \\
\hline
+ & u & x^{i_k}  \\
- & x^{i_k} & u  \\
\hline
\end{array} \qquad
\begin{array}{|c|c|c|}
\hline
\tau_k & g'_{j_k} & g''_{j_k} \\
\hline
 + & v & y^{j_k} \\
 - & y^{j_k} & v \\
\hline
\end{array}
\end{equation}
\end{definition}

\begin{definition}\label{string2}
A \emph{string module} $S = S(\omega)$ is defined by a word $\omega$ of the following form.
$\omega$ has a beginning, an intermediate part and an end. The beginning as well as the end may consist of zero, one or two letters.
The following table lists all possible beginnings and ends for $\omega$ (any beginning from the first column can match
any ending from the last column):
\begin{align*}
\begin{tabular}{| r |  c  | l |}
\hline
$\hat{\bz}_{i_{0}}  \check{\bz}_{j_0}^{\tau_0}$ &
$\text{intermediate part}$ &
$\hat{\bz}_{i_n}^{\sigma_n}  \check{\bz}_{j_n}$ \\
\hline \hline
$\text{void}$& \multirow{6}{*}{$\bx_{i_1}^{\sigma_1} \by_{j_1}^{\tau_1} \dots \bx_{i_{n-1}}^{\sigma_{n-1}} \by_{j_{n-1}}^{\tau_{n-1}}$} &$ \text{void}$\\
$	 \by_{\scriptscriptstyle  0} \, $& &$ \bx_{\scriptscriptstyle  0} $\\
$	 \by_{\scriptscriptstyle \infty} $& &$ \bx_{\scriptscriptstyle  \infty}  $\\
$	 \by_{j_0}^{\tau_0} $& &$ \bx_{i_n}^{\sigma_n}  $\\
$\bx_{\scriptscriptstyle  0}  \by_{j_0}^{\tau_0}$ & & $\bx_{i_n}^{\sigma_n}  \by_{\scriptscriptstyle 0} $\\
$\bx_{\scriptscriptstyle \infty}  \by_{j_0}^{\tau_0}$ & & $\bx_{i_n}^{\sigma_n}  \by_{\scriptscriptstyle \infty} $\\ \hline
\end{tabular}
\end{align*}
where
\begin{itemize}
\item
 $n \in \mathbb{N}$.
For $n = 1$ the intermediate part of $\omega$ is void.
\item
For any $0 \le k \le n$ we have: $i_k, j_k \in \mathbb{N}$ and $\sigma_k, \tau_k \in \bigl\{+, -\bigr\}$. 
\end{itemize}
In other words, a string word $\omega$ is given by an alternating sequence of letters $\bx_i$ or $\by_j$ such that a letter of the form $\bx_{\scriptscriptstyle  0}, \bx_{\scriptscriptstyle \infty}, \by_{\scriptscriptstyle  0}$ or $\by_{\scriptscriptstyle \infty}$ may only occur as the first or last letter of $\omega$.

\noindent Consider the Cohen--Macaulay $\RR$--module
\begin{align*}
N &= \hat{Z}_{i_0} \oplus \check{Z}_{j_0} \oplus X_{i_1} \oplus Y_{j_1} \oplus \dots  \oplus X_{i_{n-1}} \oplus Y_{j_{n-1}} \oplus \hat{Z}_{i_n} \oplus \check{Z}_{j_n},
\end{align*}
where for each $i\in \bigl\{i_0, i_n\bigr\}$ and each $j\in \bigl\{j_0, j_n\bigr\}$, we set
\begin{equation*}
\hat{Z}_{i} = \begin{cases} X_i & \text{if }\bz_{i} = \bx_{i},\\
0 & \text{if } \bz_{i} \text{ is void}.
\end{cases}
\qquad \qquad
\check{Z}_{j} = \begin{cases} Y_j & \text{if }\bz_{j} = \by_{j},\\
0 & \text{if } \bz_{j} \text{ is void}.
\end{cases}
\end{equation*}
\medskip
\noindent
Then $S=S(\omega)$ is the following  $\RA$--submodule of $N$:
\begin{align}\label{E:string}
S:=
\left[\begin{array}{ccccc}
\left(
\begin{array}{c}
f''_{i_0}  \\
g'_{j_0}  \\
0 \\
\vdots \\
0 \\
0 \\
0
\end{array}
\right),   &
\left(
\begin{array}{c}
0 \\
g''_{j_0}  \\
f'_{i_1}  \\
\vdots \\
0 \\
0 \\
0
\end{array}
\right), &  \dots, \,  &
\left(
\begin{array}{c}
0  \\
0  \\
0 \\
\vdots \\
g''_{j_{n-1}} \\
f'_{i_n} \\
0
\end{array}
\right),
&
\left(
\begin{array}{c}
0  \\
0  \\
0 \\
\vdots \\
0 \\
f''_{i_n} \\
g'_{j_n}
\end{array}
\right)
\end{array}
\right]_\RA \subseteq N
\end{align}
where for any $1 \le k \le n$, the elements $f'_{i_k}, f''_{i_k}, g'_{j_k}$ and $g''_{j_k}$ are defined by
the tables:
\begin{align}\label{strings-gen1}
\begin{array}{|c|c|c|}
\hline
\sigma_k & f'_{i_k} & f''_{i_k} \\
\hline
+ & ux & x^{i_k+1}  \\
- & x^{i_k+1} & ux  \\
\hline
\end{array}\quad
\begin{array}{|c|c|c|}
\hline
\tau_k & g'_{j_k} & g''_{j_k} \\
\hline
 + & vy & y^{j_k+1} \\
 - & y^{j_k+1} & vy \\
\hline
\end{array}
\end{align}
{The remaining entries are defined as follows:}
\begin{align}
\begin{array}{rl}
\begin{array}{|l|c|c|} \hline
& f''_{i_0}  \\ \hline
i_0 = 0  & x  \\
i_0 = \infty & ux \\
\hat{\bz}_{i_0} \text{ is void} & 0 \\ \hline
\multicolumn{2}{c}{}
\end{array} & \quad
\begin{array}{|l|c|c|} \hline
& g'_{j_n} \\ \hline
j_n = 0 & y \\
j_n = \infty & vy \\
\check{\bz}_{j_n} \text{ is void} & 0 \\ \hline
\multicolumn{2}{c}{}
\end{array} \\
\begin{array}{|l|l|c|c|}
\hline
& \sigma_n & f'_{i_n} & f''_{i_n}\\
\hline
i_n = 0 & & x &  0 \\
i_n \in \NN & + & ux & x^{j_n +1} \\
i_n \in \NN & - & x^{j_n+1} & ux \\
i_n = \infty & & ux & 0 \\
\hline
\end{array}
& \quad
\begin{array}{|l|l|c|c|}
\hline
& \tau_0 & g'_{j_0} & g''_{j_0}\\
\hline
j_0 = 0 & & 0 &  y \\
j_0 \in \NN & + & vy & y^{j_0+1} \\
j_0 \in \NN & - & y^{j_0+1} & vy \\
j_0 = \infty & & 0 & vy \\
\hline
\end{array}
\end{array}
\end{align}
\end{definition}

\begin{remark}
If the string parameter $\omega$ contains neither $\bx_{\scriptscriptstyle  0}$ nor $\by_{\scriptscriptstyle  0}$, 
there is a
better presentation of the module $S(\omega)$: we divide all entries of type $f'_i$ or $f''_i$ of $S(\omega)$ by $x$ and all entries of type $g'_j$ or $g''_j$ of $S(\omega)$ by $y$.
\end{remark}

\begin{remark} \label{merge}
Any string module $S$ in \eqref{E:string}
has a more compact presentation by ``merging'' every odd row with its subsequent row:
\begin{align*}
\begin{footnotesize}
S \cong
\left[\begin{array}{cccccc}
\left(
\begin{array}{c}
f''_{i_0}  + g'_{j_0}  \\
0 \\
\vdots \\
0 \\
0
\end{array}
\right),   &
\left(
\begin{array}{c}
g''_{j_0}  \\
f'_{i_1}  \\
\vdots \\
0 \\
0
\end{array}
\right), &
\left(
\begin{array}{c}
0  \\
f''_{i_1}+ g'_{j_1} \\
\vdots \\
0 \\
0
\end{array}\right), &
 \dots, \,  &
\left(
\begin{array}{c}
0  \\
0 \\
\vdots \\
g''_{j_{n-1}} \\
f'_{i_n}
\end{array}
\right),
&
\left(
\begin{array}{c}
0  \\
0  \\
\vdots \\
0 \\
f''_{i_n} + g'_{j_n}
\end{array}
\right)
\end{array}
\right]_\RA
\end{footnotesize}
\end{align*}
The same can be done with the horizontal stripes of any band module $B$ in  \eqref{E:band}:
\begin{align*} \begin{footnotesize}
B \cong
\left[
\begin{array}{ccccc}
\left(
\begin{array}{c}
(f''_{i_1} + g'_{j_1}) I \\
0 \\
\vdots \\
0
\end{array}
\right),   &
\left(
\begin{array}{c}
g''_{j_1} I \\
f'_{i_2} I \\
\vdots \\
0
\end{array}
\right),
 \dots, \, &
\left(
\begin{array}{c}
0 \\
0 \\
\vdots \\
(f''_{i_n} +
g'_{j_n}) I
\end{array}
\right), &
\left(
\begin{array}{c}
f'_{i_1} J \\
0 \\
\vdots \\
g''_{j_n} I
\end{array}
\right)
\end{array}
\right]_{\RA}
\end{footnotesize}
\end{align*}
\end{remark}

\begin{theorem}\label{T:main2} For the ring $\RA = \kk\llbracket x, y, u, v\rrbracket/(xy, xv, yu, uv, u^2, v^2)$ the classification of the indecomposable objects of $\CM(\RA)$ is the following.
\begin{itemize}
\item The modules $B(\omega, m, \lambda)$ and $S(\omega)$ are indecomposable. Moreover, any indecomposable Cohen--Macaulay $\RA$--module is isomorphic to some band or some string module.
\item $B(\omega, m, \lambda) \not\cong S(\omega')$  for any choice of parameters $\omega, \omega', m$ and $\lambda$.
\item $S(\omega) \cong S(\omega')$ if and only if $\omega' = \omega$ or $\omega' = \omega^{o}$, where $\omega^{o}$ is the opposite word.
\item  $B(\omega, m, \lambda) \cong B(\omega', m', \lambda')$ if and only if $m = m'$, $\lambda = \lambda'$ and $\omega'$ is given by a (possibly trivial) cyclic shift on all letters of $\omega$.
\end{itemize}
\end{theorem}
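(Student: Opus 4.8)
The plan is to run the whole classification through the functorial chain
$\CM(\RA)\xrightarrow{\FF}\Tri(\RA)\xrightarrow{\PP}\Rep(\dX)$ and to read off the answer from Bondarenko's theorem. By Theorem \ref{T:BDdimOne} the functor $\FF$ is an equivalence, so it identifies indecomposable objects and isomorphism classes on both sides. The functor $\PP$ is additive, full and preserves indecomposability, and because the transformation rule (\ref{E:transfrule}) is precisely the set of admissible moves of the underlying matrix problem, $\PP$ induces a \emph{bijection} between isomorphism classes of objects of $\Tri(\RA)$ and isomorphism classes of those objects of $\Rep(\dX)$ lying in its essential image, i.e. the pairs satisfying the regularity constraints (\ref{E:regular}). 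Thus an object of $\CM(\RA)$ is indecomposable if and only if the associated pair $(\Theta_x,\Theta_y)$ is an indecomposable regular representation of the bunch of chains $\dX$, and two modules are isomorphic if and only if the associated representations are. By Theorem \ref{T:BondarenkoClassific} together with the regularity conditions of Remark \ref{R:EssentImage}, the latter are classified up to isomorphism by the admissible string data $w$ and band data $(w,m,\lambda)$.

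First I would set up an explicit bijective dictionary between Bondarenko's words $w$ over $\dX$ and the module words $\omega$ of Definitions \ref{band2} and \ref{string2}. Each $\sim$-syllable $\xi_i\sim\alpha_i$ (respectively $\zeta_j\sim\beta_j$), together with the $-$-relations joining it to the column symbols $\gamma$ (respectively $\delta$), is assigned a single letter $\bx_i^{\sigma}$ (respectively $\by_j^{\tau}$); the sign $\sigma$ (respectively $\tau$) records the orientation of the syllable, i.e. whether the distinguished generator adjacent to the incoming $-$ is $x^i$ or $u$ (respectively $y^j$ or $v$), which is exactly the content of the tables (\ref{E:generators}). The one-dimensional boundary blocks $\xi_0,\alpha_\infty,\zeta_0,\beta_\infty$ are matched with the boundary letters $\bx_0,\bx_\infty,\by_0,\by_\infty$ and with the void cases of the beginning/end table in Definition \ref{string2}. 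I would then verify that this dictionary is compatible with both equivalence relations: the opposite word $w^o$ corresponds to $\omega^o$, and cyclic shifts of a cyclic word $w$ correspond to cyclic shifts of $\omega$.

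The heart of the argument is to compute $\GG$ of the triple attached to a Bondarenko canonical form and to check that it reproduces the submodules (\ref{E:band}) and (\ref{E:string}) verbatim. Given the canonical matrices $\Theta_x(w),\Theta_y(w)$, the triple is $\bigl(N,\kk^t,(\Theta_x,\Theta_y)\bigr)$ with $N=\bigoplus X_{i_k}\oplus\bigoplus Y_{j_k}$, and by Lemma \ref{generators1} the module $\GG(\Tau)$ is generated by lifts $w_1,\dots,w_t$ of the columns, each $w_i$ being the combination of the distinguished generators $x^{i_k},u$ of $X_{i_k}$ and $y^{j_k},v$ of $Y_{j_k}$ dictated by the $i$-th column of $(\Theta_x,\Theta_y)$. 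Reading off a column that links a block in $\Theta_x$ to a block in $\Theta_y$ produces precisely a generator of the shape displayed in (\ref{E:band})/(\ref{E:string}), with $f'_{i_k},f''_{i_k},g'_{j_k},g''_{j_k}$ given by the sign tables; for a band datum the scalar entries $1$ become $m\times m$ identity blocks and the wrap-around entry becomes the Jordan block $J_m(\lambda)$, reproducing the last column of (\ref{E:band}). The boundary rows and the void cases are handled by the separate endpoint tables of Definition \ref{string2}. Examples \ref{ex1}--\ref{ex4} are exactly the test cases one would use to calibrate the signs and the placement of $J_m(\lambda)$.

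With the dictionary and the $\GG$-computation in hand, all four assertions follow formally. The modules $B(\omega,m,\lambda)$ and $S(\omega)$ are images under the equivalence of indecomposable objects, hence indecomposable; a band can never be a string because a cyclic word carries the continuous parameter $\lambda$ while a non-cyclic word does not, a dichotomy preserved by $\PP$ and $\GG$. The string rule $S(\omega)\cong S(\omega')\Longleftrightarrow\omega'\in\{\omega,\omega^o\}$ is the transport of Definition \ref{string1}(4), and the band rule is the transport of Definition \ref{band1}(6). The step I expect to be most delicate is this last reconciliation: Bondarenko's band equivalence also identifies $(w,m,\lambda)$ with $(w^{[4l+2]},m,\lambda^{-1})$ and with $(w^o,m,\lambda)$, whereas the theorem keeps $\lambda$ fixed and allows only a cyclic shift of $\omega$. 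Showing that the sign conventions built into $\omega$ absorb the orientation-reversal and the $\lambda\mapsto\lambda^{-1}$ symmetries, so that on the module side only the genuine rotations survive with $\lambda$ unchanged, together with the bookkeeping of all endpoint and void cases in the $\GG$-computation, is where the real work lies; non-periodicity of $\omega$ is what guarantees that these rotations act freely and that no further coincidences occur.
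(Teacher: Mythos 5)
Your proposal follows essentially the same route as the paper's own proof: reduce via the equivalence $\FF$ and the full, isomorphism-preserving functor $\PP$ to Bondarenko's classification of $\Rep(\dX)$ together with the regularity constraints of Remark \ref{R:EssentImage}, translate Bondarenko words $w$ into module words $\omega$ by the same syllable-by-syllable dictionary, compute $\GG$ on the canonical forms to recover the presentations (\ref{E:band}) and (\ref{E:string}), and transport the string/band equivalence relations back to $\CM(\RA)$. Even the point you flag as the remaining delicate work---reconciling Bondarenko's band equivalences $(w^{o},m,\lambda)$ and $(w^{[4l+2]},m,\lambda^{-1})$ with the fixed-$\lambda$ cyclic-shift rule of the theorem---is treated in the paper by the same device, namely normalizing each band word (so that its last letter is $\gamma$ or $\delta$) before applying the dictionary.
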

\begin{proof}
According to  Theorem \ref{T:main1} the classification problem of indecomposable objects of $\CM(\RA)$
is equivalent to the matrix problem over the bunch of chains $\dX$ from Definition  \ref{D:distinguishedbuch}. More precisely, we had a diagram of categories and functors
\begin{equation*}
\begin{array}{c}
\xymatrix@M+1pt{
\CM(\RA) \ar@{}[r]|-{\sim} \ar@/^/[r]^-{\FF}  & \Tri(\RA) \ar[r]^{\PP} \ar@/^/[l]^-{\GG}
& \Rep(\dX)
}
\end{array}
\end{equation*}
where $\FF$ and $\GG$ are mutually inverse equivalences of categories and $\PP$ is a full functor preserving
isomorphy classes and indecomposability of objects.

\medskip
\noindent
1.~Indecomposable objects of the category $\Rep(\dX)$ are classified by string and band data according to  Theorem \ref{T:BondarenkoClassific}. Moreover, the indecomposable objects of $\Rep(\dX)$ lying in the essential image of $\PP$ are described by Remark \ref{R:EssentImage}.

\medskip
\noindent
2.~Let $w$ be the word of string or band datum in $\dX$, see Definition \ref{string1} and Definition \ref{band1}.
Note that $w$ is uniquely determined by the symbols in $\dE$ it contains.
It follows that we may delete all subwords of the form $\gamma \sim \delta$ or $\delta \sim \gamma$ and relations $-$ \emph{without loss of information}. Now we can translate the remaining subwords as follows:
\begin{align*}
\begin{array}{|c|c|c|c|c|c|c|c|}
\hline
\xi_0 	&  \alpha_i \sim \xi_i & \xi_i \sim \alpha_i  & \alpha_\infty &
\zeta_0 & \beta_j \sim \zeta_j & \zeta_j \sim \beta_j & \beta_\infty \\
\hline
\bx_{\scriptscriptstyle  0}	&	 	\bx_{i}^{+}	& \bx_{i}^{-}		& \bx_{\scriptscriptstyle \infty} &	
\by_{\scriptscriptstyle  0} &\by_{j}^{+} & \by_{j}^{-} & \by_{\scriptscriptstyle \infty} \\
\hline
\end{array}
\end{align*}
This table allows to pass from a word $w$ to a word $\omega$ as  in Definitions \ref{band2} and \ref{string2}.

\medskip
\noindent
3.~Consider a string datum $(w)$ (obeying the constraint from Remark \ref{R:EssentImage})
or a band datum $(w, m, \lambda)$. In Subsection \ref{dXofP} we explained the construction of the corresponding indecomposable object $\Theta = \bigl(\Theta_x, \Theta_y)$ of $\Rep(\dX)$. Now we give  the construction
of a triple $\Tau=(N,V,\Theta)$ in $\Tri(\RA)$ such that $\PP(\Tau) = \Theta$.
 Let $m_0$, $m_i^{\pm}$, $m_\infty$, $n_0$, $n_j^{\pm}$ respectively $n_\infty$ be the number of times the letter
$\bx_{\scriptscriptstyle  0}$, $\bx_{i}^{\pm}$, $\bx_{\scriptscriptstyle \infty}$, $\by_{\scriptscriptstyle  0}$, $\by_{j}^{\pm}$ respectively $\by_{\scriptscriptstyle \infty}$ occurs in $w$.
Let $t$ be the number of times $\gamma$ (or $\delta$) occurs in $w$.
Then $\Tau = (N,V, \Theta)$, where
\begin{itemize}
\item $N = X_0^{m_0} \oplus \bigoplus_{i \in \NN} X_i^{m_i^+ + m_i^-} \oplus X_\infty^{m_\infty}
\oplus Y_0^{n_0} \oplus \bigoplus_{j \in \NN} Y_j^{n_j^+ + n_j^-} \oplus Y_\infty^{n_\infty}$,
\item $V= \kk^t$,
\item $\Theta= \bigl(\Theta_x, \Theta_y)$.
\end{itemize}

\medskip
\noindent
4.~ Now recall the construction of the indecomposable Cohen--Macaulay $\RA$--module $M = \GG(\Tau)$.
Consider a basis of a $\kk$--vector space $N/\idm N$ given by the images of the distinguished generators of the indecomposable direct summands of $N$.
Let $\pi\!: N \to N/\idm N$ be the canonical projection and $\widetilde\Theta:= \Bigl(\begin{smallmatrix}\displaystyle \Theta_x \\ \displaystyle \Theta_y\end{smallmatrix}\Bigr)\!: V \to N/\idm N$.
 By Theorem \ref{T:BDdimOne} we have: $$
 M := \GG(\Tau) = \pi^{-1}\bigl(\im (\widetilde\Theta) \bigr) \subseteq N.$$
To compute $M$ we do the following:
\begin{enumerate}
\item We multiply each entry of $\widetilde\Theta$ with its horizontal weight.
\item For each subword $\epsilon' \sim \epsilon''$ in $w$ such that $\epsilon', \epsilon'' \in \dE$ we merge the two corresponding rows in $\widetilde\Theta$ and add their entries to each other.
\item We translate all entries of the new matrix $\widetilde\Theta$ as follows:
$$
\begin{array}{|c|c|c|c|c|c|c|c|}
\hline
 \xi_0 & \xi_k & \alpha_\infty & \alpha_k & \zeta_0 & \zeta_k & \beta_\infty & \beta_k \\ \hline
 e_1 & x^k & u &  u & e_2 & y^k & v &  v \\ \hline
\end{array} \qquad \displaystyle{k \in \NN}.
$$
\end{enumerate}
Lemma \ref{L:comparison} implies that
$M$ is generated by the columns of the modified matrix $\widetilde\Theta$.
Finally, observe that $M \cong (x+y) M$ in $\RA-\mod$. Thus, if $w$ is a string word containing one of the symbols
 $\xi_0$ or $\zeta_0$, one has to multiply the  columns of $\widetilde\Theta$ with $(x+y)$ to obtain entries which lie in $\RA$.
After permutation of rows in $\tilde\Theta$, one obtains exactly the presentations $(\ref{E:band})$ respectively $(\ref{E:string})$ for the words in Definition $\ref{band2}$ or $\ref{string2}$.

\medskip
\noindent
5. The statement about the isomorphy classes of string modules in $\CM(\RA)$
is a direct translation of the corresponding result for the category $\Rep(\dX)$ stated in  Theorem \ref{T:BondarenkoClassific}.
Considering all pairwise non-equivalent band data $(w,m,\lambda)$, we may assume that the last letter of $w$ is $\gamma$ or $\delta$ by the equivalence conditions in Definition \ref{band1}. Then Theorem $\ref{T:BondarenkoClassific}$ yields the isomorphism conditions for band modules $B(\omega,m,\lambda)$ as stated above.

\medskip
\noindent
6.~Summing up, the key point of the proof of Theorem \ref{T:main2} is that we have the following isomorphisms in the category $\Rep(\dX)$ \emph{by construction}:
$$
\PP \circ \Bigl(\FF\bigl(B(\omega, m, \lambda)\bigr)\Bigr) \cong \bigl(\Theta_x(w, m, \lambda), \Theta_x(w, m, \lambda)\bigr)
$$
for a band module $B(\omega, m, \lambda)$ from Definition \ref{band2} and
$$
\PP \circ \Bigl(\FF\bigl(S(\omega)\bigr)\Bigr) \cong \bigl(\Theta_x(w), \Theta_x(w)\bigr).
$$
for a string module $S(\omega)$ from Definition \ref{string2}.
\end{proof}

\begin{remark}\label{R:locfreePSpec} By Lemma \ref{L:comparison}
any band module $B(\omega, m, \lambda)$ is locally free on the punctured spectrum. A string module $S(\omega)$ is \emph{not}
locally free on the punctured spectrum if and only if
$\omega$ contains a letter $\bx_{\scriptscriptstyle \infty}$ or $\by_{\scriptscriptstyle \infty}$.
\end{remark}

\begin{example} \label{ex:A} In the following examples we translate the canonical forms of the preceding subsection into indecomposable Cohen--Macaulay modules over $\RA$ as described in the proof of Theorem \ref{T:main2}.
\begin{enumerate}
\item Let $(w,m,\lambda)$ be the band datum from Example \ref{ex1}.\\
Then $w$ corresponds to $\omega = \bx^-_{i} \by^-_{j_1} \bx^+_{i} \by^-_{j_2}$
and its band module is given by
\begin{align*}
\qquad B(\omega,m,\lambda) &\cong
\left[\begin{array}{cccc}
\left(
\begin{array}{c}
x^i I  \\
0  \\
0 \\
v J \\
\end{array}
\right),
&
\left(
\begin{array}{c}
u I \\
0  \\
y^{j_1} I \\
0 \\
\end{array}
\right),
&
\left(
\begin{array}{c}
0  \\
u I \\
v I \\
0 \\
\end{array}
\right),
&
\left(
\begin{array}{c}
0  \\
x^i I \\
0 \\
y^{j_2} I \\
\end{array}
\right)
\end{array}\right]_\RA \\
& \cong
\left[\begin{array}{cccc}
\left(
\begin{array}{c}
x^i I   \\
v J
\end{array}
\right),
&
\left(
\begin{array}{c}
(u  +
y^{j_1}) I \\
0 \\
\end{array}
\right),
&
\left(
\begin{array}{c}
v I \\
u I
\end{array}
\right),
&
\left(
\begin{array}{c}
0  \\
(x^i  +
y^{j_2}) I
\end{array}
\right)
\end{array}\right]_\RA
\end{align*}
Here, $J$ denotes the Jordan block with eigenvalue $\lambda$ and $I$ the identity matrix, both of size $m$.
\item
Let $w$ be the string datum from Example \ref{ex2}.\\
Then $w$ corresponds to $\omega = \bx^-_{i} \by^-_{j}$
and its string module is given by
\begin{align*}
S(\omega) &\cong
\left[\begin{array}{ccc}
\left(
\begin{array}{c}
x^i   \\
0
\end{array}
\right),
&
\left(
\begin{array}{c}
u \\
y^{j}
\end{array}
\right),
&
\left(
\begin{array}{c}
0  \\
v
\end{array}
\right) \end{array} \right]_\RA \cong (x^i, u +y^j, v)_\RA
\end{align*}
\item
Let $w$ be the string datum from Example \ref{ex3}.\\
Then $w$ corresponds to $\omega = \bx_{\scriptscriptstyle  0} \by^-_{j_1} \bx^-_{i} \by^+_{j_2}$
and its string module is given by
\begin{align*}
S(\omega) &\cong
\left[\begin{array}{cccc}
\left(
\begin{array}{c}
x  \\
0  \\
y^{j_1+1} \\
0 \\
\end{array}
\right),
&
\left(
\begin{array}{c}
0 \\
x^{i+1}  \\
v y \\
0 \\
\end{array}
\right),
&
\left(
\begin{array}{c}
0  \\
u x \\
0 \\
v y \\
\end{array}
\right),
&
\left(
\begin{array}{c}
0  \\
0  \\
0 \\
y^{j_2+1} \\
\end{array}
\right)
\end{array}\right]_\RA \\
& \cong
\left[\begin{array}{cccc}
\left(
\begin{array}{c}
x  + y^{j_1+1} \\
0 \\
\end{array}
\right),
&
\left(
\begin{array}{c}
v y \\
x^{i+1}
\end{array}
\right),
&
\left(
\begin{array}{c}
0  \\
u x +
v y
\end{array}
\right),
&
\left(
\begin{array}{c}
0  \\
y^{j_2+1} \\
\end{array}
\right)
\end{array}\right]_\RA
\end{align*}
\item
Let $w$ be the string datum from Example \ref{ex4}. \\
Then $w$ corresponds to $\omega = \bx_{\scriptscriptstyle  \infty} \by^+_{j} \bx^-_{i} \by_{\scriptscriptstyle \infty}$
and its string module is given by
\begin{align*}
S(\omega) &\cong
\left[\begin{array}{ccc}
\left(
\begin{array}{c}
u  \\
0  \\
v \\
0
\end{array}
\right),
&
\left(
\begin{array}{c}
0 \\
x^{i} \\
y^j \\
0
\end{array}
\right),
&
\left(
\begin{array}{c}
0  \\
u  \\
0 \\
v
\end{array}
\right)
\end{array}\right]_\RA
 \cong
\left[\begin{array}{ccc}
\left(
\begin{array}{c}
u + v \\
0
\end{array}
\right),
&
\left(
\begin{array}{c}
y^j \\
x^i
\end{array}
\right),
&
\left(
\begin{array}{c}
0  \\
u  +
v
\end{array}
\right)
\end{array}\right]_\RA
\end{align*}
\end{enumerate}
\end{example}
\noindent
Remind that our original motivation was to describe indecomposable Cohen--Macaulay modules over
the ring $\RP = \kk\llbracket x,y, z\rrbracket/(xy, z^2)$. Theorem \ref{T:RejectionLemma}, Lemma \ref{L:comparison} and Theorem \ref{T:main2} yield a complete classification of indecomposable objects of
$\CM(\RP)$.

\begin{theorem}\label{C:main} An indecomposable Cohen--Macaulay $\RP$--module is either $\RP$, or one of the
 band modules  (\ref{E:band}) respectively  string modules  (\ref{E:string}).
Moreover, in the formulae  (\ref{E:band}) and (\ref{E:string}),
the generation over $\RA$ can be replaced by the generation over $\RP$ (with the only exception of $S(\bx_{\scriptscriptstyle  0} \by_{\scriptscriptstyle  0})$).
\end{theorem}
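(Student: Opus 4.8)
The plan is to deduce the statement by combining the reduction to the minimal overring (Theorem \ref{T:RejectionLemma}) with the complete classification of $\CM(\RA)$ obtained in Theorem \ref{T:main2}, and then to read off the generation statement from Lemma \ref{L:comparison}. Since $\RP = \RP_{\infty\infty}$ is a complete intersection it is Gorenstein, so Theorem \ref{T:RejectionLemma} applies with $\RA = \End_\RP(\idm)$ its minimal overring. First I would recall that the restriction functor $\II\colon \CM(\RA) \to \CM(\RP)$ is fully faithful and that $\mathsf{Ind}\bigl(\CM(\RP)\bigr) = \{\RP\} \cup \mathsf{Ind}\bigl(\CM(\RA)\bigr)$ (identifying an $\RA$-module with its restriction): an indecomposable Cohen--Macaulay $\RP$-module either has a free summand, in which case it is $\RP$ itself, or it is the restriction of an indecomposable Cohen--Macaulay $\RA$-module. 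Invoking Theorem \ref{T:main2}, each such $\RA$-module is a band module $B(\omega, m, \lambda)$ or a string module $S(\omega)$, which gives the first assertion at once.

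Next I would address the claim that the generation over $\RA$ in (\ref{E:band}) and (\ref{E:string}) may be replaced by generation over $\RP$. Here the key input is Lemma \ref{L:comparison}(1). Every band or string module is $M = \GG(\Tau)$ for an indecomposable triple $\Tau = (N, \kk^t, (\Theta_x, \Theta_y))$, and by Lemma \ref{generators1} it is generated over $\RA$ by the $t$ columns $w_1, \ldots, w_t$ displayed in (\ref{E:band}) and (\ref{E:string}) (up to the harmless rescaling by $x+y$ used in the proof of Theorem \ref{T:main2}, which is legitimate because $x+y$ is a non-zero-divisor and $M \cong (x+y)M$). Lemma \ref{L:comparison}(1) states that, unless $\Tau \cong \bigl(\RR, \kk, ((1),(1))\bigr)$, one has $\II(M) = \langle w_1, \ldots, w_t\rangle_\RP$; thus away from this single triple the very same columns already generate the module over $\RP$.

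Finally I would identify the exceptional triple. The triple $\Tau \cong \bigl(\RR, \kk, ((1),(1))\bigr)$ has $N = \RR = X_0 \oplus Y_0$ and $V = \kk$ and yields $M \cong \RA$; tracing it through the dictionary in the proof of Theorem \ref{T:main2} (the word $w = \xi_0 - \gamma \sim \delta - \zeta_0$ translates via $\xi_0 \mapsto \bx_{\scriptscriptstyle 0}$, $\zeta_0 \mapsto \by_{\scriptscriptstyle 0}$) shows that it corresponds precisely to $S(\bx_{\scriptscriptstyle 0} \by_{\scriptscriptstyle 0})$. Because $\RA$ is a proper overring of $\RP$, the short exact sequence $0 \to \RP \to \RA \to \kk \to 0$ of Theorem \ref{T:RejectionLemma} shows that $\RA$ is not generated over $\RP$ by the image of a single element, so the replacement fails exactly for $S(\bx_{\scriptscriptstyle 0} \by_{\scriptscriptstyle 0})$, as claimed. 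I expect the only genuinely delicate point to be this last bookkeeping step: matching the abstractly defined exceptional triple with the concrete word $\bx_{\scriptscriptstyle 0}\by_{\scriptscriptstyle 0}$ and confirming that no other band or string word produces $\RA$, which follows since $\GG$ is an equivalence and hence $\RA$ has a unique preimage among triples.
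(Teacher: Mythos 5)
Your proposal is correct and follows exactly the paper's own route: the paper proves Theorem \ref{C:main} precisely by combining Theorem \ref{T:RejectionLemma} (to reduce $\mathsf{Ind}\bigl(\CM(\RP)\bigr)$ to $\{\RP\}$ together with $\mathsf{Ind}\bigl(\CM(\RA)\bigr)$), Theorem \ref{T:main2} (bands and strings exhaust $\mathsf{Ind}\bigl(\CM(\RA)\bigr)$), and Lemma \ref{L:comparison} (generation over $\RA$ equals generation over $\RP$ except for the triple giving $M \cong \RA$, i.e.\ $S(\bx_{\scriptscriptstyle 0}\by_{\scriptscriptstyle 0})$). Your extra bookkeeping identifying the exceptional triple with the word $\bx_{\scriptscriptstyle 0}\by_{\scriptscriptstyle 0}$ is accurate and consistent with the paper's ideal table, where $S(\bx_{\scriptscriptstyle 0}\by_{\scriptscriptstyle 0})$ corresponds to $(1)_\RA$ but to $(x+y, xz)_\RP$.
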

\begin{remark}\label{translationP}
Any string or band module $M$ over $\RA$ can be translated into a Cohen--Macaulay module $\II(M)$ over $\RP$ as follows.
\begin{enumerate}
\item Assume that $M= B(\omega,m,\lambda)$ or $M= S(\omega)$ where the string datum $\omega$ does not contain $\bx_{\scriptscriptstyle  0}$ or $\by_{\scriptscriptstyle  0}$.
We translate the entries of $M$ as follows:
\begin{align*}
\begin{array}{|c|c|c|c|}
\hline
x^i & u &  y^{j} & v \\
\hline
x^{i+1} & xz & y^{j+1} & yz \\
\hline
\end{array}
\end{align*}
There is a better presentation of $M$ if the following assumptions are satisfied:
\begin{itemize}
\item $\omega$ has even length and the sequence of signs $\tau_k$, $\sigma_k$ in $\omega$ alternates,
\item if $\omega$ begins (respectively ends) with $\bx_{\scriptscriptstyle \infty}$ or $\by_{\scriptscriptstyle \infty}$ then the first (respectively last) sign in $\omega$ is $+$ (respectively $-$).
\end{itemize}
In this case, we apply Remark \ref{merge} to $M$, replace $xz + yz$ by $z$, any entry of type $x^i$ by $x^{i-1}$ and any entry $y^j$ by $y^{j-1}$.
\item
If $M=S(\omega) \not \cong S(\bx_{\scriptscriptstyle  0} \by_{\scriptscriptstyle  0})$ is a
string module such that the string datum $\omega$ contains $\bx_{\scriptscriptstyle  0}$ or $\by_{\scriptscriptstyle  0}$, we translate all entries of $M$ as follows:
\begin{align*}
\begin{array}{|c|c|c|c|}
\hline
x^i & u x & y^{j} & v y \\
\hline
x^{i} & xz  & y^{j} & yz \\
\hline
\end{array}
\end{align*}
\end{enumerate}
\end{remark}

\begin{remark}
Theorem \ref{C:main} remains valid for any curve singularity of type $\RP_{2r+1,2s+1}$, where $r,s \in \mathbb{N}_0 \cup \{\infty\}$, but string and band modules have to be redefined in the following way:
\begin{enumerate}
\item The band and string modules over $\RP_{2r+1,\infty}$ are given by the Definitions \ref{band2} and \ref{string2}, but their string and band words $\omega$ may only contain letters $\bx_i$ such that $0 \leq i \leq r$
or $\by_j$, where $j \in \mathbb{N}_{0} \cup \{\infty \}$.
\item Band and string data over $\RP_{2r+1,2s+1}$, where $r,s \in \mathbb{N}_0$,
may only contain the letters $\bx_i$
such that $0 \leq i \leq r$ or $\by_j$ such that $0 \leq j \leq s$.
\end{enumerate}
The method of this section can also be generalized using Bondarenko's work \cite{bo1} to obtain an explicit classification of the indecomposable Cohen--Macaulay modules over the remaining curve singularities $\RP_{2r,q}$, where $r\in \mathbb{N}$ and $q\in \mathbb{N} \cup \{ \infty\}$.
This generalization is straightforward to carry out, but the explicit combinatorics are too space-consuming to be stated in the present article.
\end{remark}
\begin{example}
In the following we apply Remark \ref{translationP}
to translate the string and band modules over $\RA$ from \ref{ex:A} into indecomposable Cohen-Macaulay modules over $\RP$.
\begin{enumerate}
\item Let $(\omega,m,\lambda)$ be a band datum with $\omega = \bx^-_{i} \by^-_{j_1} \bx^+_{i} \by^-_{j_2}$.
Then its band module $B=B(\omega,m,\lambda)$ translates over $\RP$ into
\begin{align*}
\quad \II(B) &\cong
\left[\begin{array}{cccc}
\left(
\begin{array}{c}
x^{i+1}\, I   \\
yz \,J
\end{array}
\right),
&
\left(
\begin{array}{c}
(xz +
y^{j_1+1})\, I \\
0 \\
\end{array}
\right),
&
\left(
\begin{array}{c}
xz \,I \\
yz \,I
\end{array}
\right),
&
\left(
\begin{array}{c}
0  \\
(x^{i+1}  +
y^{j_2+1})\, I
\end{array}
\right)
\end{array}\right]_\RP
\end{align*}
\item
Let $\omega = \bx^-_{i} \by^-_{j}$.
Then the string module $S=S(\omega)$ translates over $\RP$ into
\begin{align*}
\II(S) &\cong
\left(
x^{i+1}, xz + y^{j+1},
yz \right)_\RP
\end{align*}
\item
Let $\omega = \bx_{\scriptscriptstyle  0} \by^-_{j_1} \bx^-_{i} \by^+_{j_2}$.
Then the string module $S=S(\omega)$ translates over $\RP$ into
\begin{align*}
\II(S)
& \cong
\left[\begin{array}{cccc}
\left(
\begin{array}{c}
x  + y^{j_1+1} \\
0 \\
\end{array}
\right),
&
\left(
\begin{array}{c}
yz \\
x^{i+1}
\end{array}
\right),
&
\left(
\begin{array}{c}
0  \\
(x + y)\, z
\end{array}
\right),
&
\left(
\begin{array}{c}
0  \\
y^{j_2+1} \\
\end{array}
\right)
\end{array}\right]_\RP
\end{align*}
\item
Let $\omega = \bx_{\scriptscriptstyle  \infty} \by^+_{j} \bx^-_{i} \by_{\scriptscriptstyle \infty}$.
Then the string module $S=S(\omega)$ translates over $\RP$ into
\begin{align*}
\quad \II(S) &\cong
\left[\begin{array}{ccc}
\left(
\begin{array}{c}
xz  \\
0  \\
yz \\
0
\end{array}
\right),
&
\left(
\begin{array}{c}
0 \\
x^i \\
y^j \\
0
\end{array}
\right),
&
\left(
\begin{array}{c}
0  \\
x z  \\
0 \\
y z
\end{array}
\right)\end{array}\right]_\RP
 \cong
\left[\begin{array}{ccc}
\left(
\begin{array}{c}
z \\
0
\end{array}
\right),
&
\left(
\begin{array}{c}
y^j \\
x^i
\end{array}
\right),
&
\left(
\begin{array}{c}
0  \\
z
\end{array}
\right)
\end{array}\right]_\RP
\end{align*}
\end{enumerate}
\end{example}

\subsection{\texorpdfstring{Induced Cohen-Macaulay modules of $\RT_{\infty \infty}$ and their matrix factorizations}{Induced CM modules over the maximal degeneration of type T and their matrix factorizations}}
\noindent Our next motivation was to study Cohen--Macaulay modules over the hypersurface singularity $T = \kk\llbracket a, b\rrbracket/(a^2 b^2)$.
At the beginning of Section \ref{S:explizit} we have constructed  a fully faithful  functor
$ \xymatrix@M+1pt{
\JJ\!: \CM(\RA) \ar@{^{(}->}[r]& \CM(\RT).
}
$
Its explicit description, adapted to the combinatorics of bands and strings, was explained in Lemma \ref{L:comparison}:
if $M = \langle w_1, \ldots, w_t \rangle_\RA \subseteq \RR^{k+l}$ then
$$\JJ(M) =
\langle w_1, \ldots, w_t, u w_1, \dots, u w_t, v w_1, \dots, v w_t \rangle_\RT \subseteq \RR^{k+l}.
$$
In fact, the number of generators of $\JJ(M)$ can be reduced.
\begin{remark}\label{translationT}
Any string or band module $M$ over $\RA$ can be translated into a Cohen--Macaulay module $\JJ(M)$ over $\RT$ as follows.
\begin{itemize}
\item Let $M$ be a band module or a string module $S(\omega)$ such that $\omega$ does not
$\bx_{\scriptscriptstyle  0}$ or $\by_{\scriptscriptstyle  0}$.
Then
$\JJ(M) = \langle w_1, \ldots, w_t \rangle_T$. Moreover,
we may replace all entries in every generator of $\JJ(M)$ by the table:
\begin{align*}
\begin{array}{|c|c|c|c|}
\hline
x^i & u &  y^{j} & v \\
\hline
a^{i+2} & a^2 b & b^{j+2} & a b^2 \\
\hline
\end{array}
\end{align*}
\item Let $M = S(\omega)$ such that $\omega$ contains $\bx_{\scriptscriptstyle  0}$ or $\by_{\scriptscriptstyle  0}$.
For every column  $w_i$ containing an entry $x$ respectively $y$, add $u w_i$ respectively $v w_i$ to the span $\langle w_1, \ldots, w_t \rangle_\RT$.
Then $\JJ(M)$ is equal to this span with the additional generators.
At last, we can translate all entries into elements of $\RT$ using the table:
\begin{align*}
\begin{array}{|c|c|c|c|}
\hline
x^i & u x & y^{j} & v y \\
\hline
a^{i+1} & a^2 b  & b^{j+1} & a b^2 \\
\hline
\end{array}
\end{align*}
\end{itemize}
\end{remark}
\begin{example}
Now we translate the string and band modules over $\RA$ from \ref{ex:A} into indecomposable Cohen-Macaulay modules over $\RT$ using Remark \ref{translationT}.
\begin{enumerate}
\item Let $(\omega,m,\lambda)$ be a band datum with $\omega = \bx^-_{i} \by^-_{j_1} \bx^+_{i} \by^-_{j_2}$.
Then its band module $B=B(\omega,m,\lambda)$ translates over $\RT$ into
\begin{align*}
\quad \JJ(B) &\cong
\left[\begin{array}{cccc}
\left(
\begin{array}{c}
a^{i+2}\, I   \\
ab^2 \,J
\end{array}
\right),
&
\left(
\begin{array}{c}
(a^2 b +
b^{j_1+2})\, I \\
0 \\
\end{array}
\right),
&
\left(
\begin{array}{c}
a b^2 \,I \\
a^2 b \,I
\end{array}
\right),
&
\left(
\begin{array}{c}
0  \\
(a^{i+2}  +
b^{j_2+2})\, I
\end{array}
\right)
\end{array}\right]_\RT
\end{align*}
\item
Let $\omega = \bx^-_{i} \by^-_{j}$.
Then the string module $S=S(\omega)$ translates over $\RT$ into
\begin{align*}
\JJ(S) &\cong
\left(
a^{i+2}, a^2 b + y^{j+2},
a b^2 \right)_\RT
\end{align*}
\item
Let $\omega = \bx_{\scriptscriptstyle  0} \by^-_{j_1} \bx^-_{i} \by^+_{j_2}$.
Then the string module $S=S(\omega)$ translates over $\RP$ into
\begin{align*}
\JJ(S)
& \cong
\left[\begin{array}{ccccc}
\left(
\begin{array}{c}
a^2  + b^{j_1+2} \\
0 \\
\end{array}
\right),
&
\left(
\begin{array}{c}
a b^2 \\
a^{i+2}
\end{array}
\right),
&
\left(
\begin{array}{c}
0  \\
a^2 b + a b^2
\end{array}
\right),
&
\left(
\begin{array}{c}
0  \\
b^{j_2+2} \\
\end{array}
\right),
&
\left(
\begin{array}{c}
a^2 b  \\
0 \\
\end{array}
\right)
\end{array}\right]_\RT
\end{align*}
\item
Let $\omega = \bx_{\scriptscriptstyle  \infty} \by^+_{j} \bx^-_{i} \by_{\scriptscriptstyle \infty}$.
Then the string module $S=S(\omega)$ translates over $\RP$ into
\begin{align*}
\quad \JJ(S) &\cong
\left[\begin{array}{ccc}
\left(
\begin{array}{c}
a^2 b + a b^2 \\
0
\end{array}
\right),
&
\left(
\begin{array}{c}
b^{j+2} \\
a^{i+2}
\end{array}
\right),
&
\left(
\begin{array}{c}
0  \\
a^2 b + a b^2
\end{array}
\right)
\end{array}\right]_\RT \\
& \cong
\left[\begin{array}{ccc}
\left(
\begin{array}{c}
ab \\
0
\end{array}
\right),
&
\left(
\begin{array}{c}
b^{j} \\
a^{i}
\end{array}
\right),
&
\left(
\begin{array}{c}
0  \\
ab
\end{array}
\right)
\end{array}\right]_\RT
\end{align*}
\end{enumerate}
\end{example}

\begin{remark}
There is an involution $\tau$ on $\RA =
\kk \llbracket x,y,u,v\rrbracket/(xy, xv, yu,  uv, u^2, v^2)$
which interchanges $x$ and $y$, $u$ and $v$.
Restricted to $\RP= \kk \llbracket x, y, z\rrbracket/(xy, z^2) \subset \RA$ , $\tau$ is still an involution such that $\tau(z) = z$.
The restriction of $\tau$ to $\RT = \kk \llbracket a,b \rrbracket/(a^2 b^2) \subset A$
interchanges $a$ and $b$. Overall, $\tau$ induces an involution on the category of Cohen--Macaulay modules over $\RA$, $\RP$ or $\RT$.
The corresponding action of $\tau$ on words $\omega$ of string or band data of $\CM(\RA)$
 is given by interchanging $\bx$ and $\by$ in $\omega$.
\end{remark}
\noindent
In the following table, we list all indecomposable Cohen--Macaulay ideals of $\RA$ and
$\RP$ (except $\RP$ itself) and the corresponding ideals of $\RT$ up to isomorphism and involution $\tau$.
Let $i, j \in \NN$ and $\lambda \in
\kk^\ast$. For all band data the multiplicity parameter $m$ is set to $1$.
\begin{center}
\begin{longtable}{|l|l|l|l|}
\hline
{invariant of $\dX$}											&	 {ideal in }$\RA$ 								& {ideal in }$\RP$										 & {ideal in }$\RT$ \\
\hline
\hline \endhead
$\bx_{\scriptscriptstyle  0}$													& $(x)$ 							 &	$(x)$													 & $(a^2)$ \\ \hline
$\bx_{\scriptscriptstyle \infty}$											& $(u)$								 &	$(xz)$												 & $(a^2 b)$ \\ \hline
$\bx_{i}^{-}$											& $(x^i, u)	$				 &$	(x^{i+1}, xz)	$							 & $(a^{i+2}, a^2 b)$ \\ \hline
$\bx_{\scriptscriptstyle  0} \by_{\scriptscriptstyle  0}$										& $(1)	$							 &$	(x+y, xz)	$									 &
$(a^2 + b^2, a^2 b, a b^2)$ \\ \hline
$\bx_{i}^{+}	\by_{\scriptscriptstyle  0}		$						& $(ux, x^{i+1}+y)$		&	 $(xz, x^{i+1} + y)$						 &
$(a^2 b, a^{i+2} + b^{2}, ab^2)$ \\ \hline
$\bx_{i}^{+}\by_{j}^{-}	$					& $(u, x^i + y^j, v)$	&	$(xz, x^{i+1} + y^{j+1}, yz)$	&
$(a^2 b, a^{i+2} + b^{j+2}, ab^2)$ \\ \hline
\multirow{2}{*}{$\bx_{i}^{-} \by_{\scriptscriptstyle  0}$}&
\multirow{2}{*}{$(x^{i+1}, ux + y)$}	&	
\multirow{2}{*}{$(x^{i+1}, xz + y)$}	&
$i=1: (a^{3}, a^2 b + b^{2})$ \\
&&& $i \geq 2: (a^{i+2}, a^2 b + b^{2}, ab^2)$ \\ \hline
$\bx_{i}^{-}\by_{j}^{-}	$					& $(x^i, u + y^j, v)$	&	 $(x^{i+1}, xz + y^{j+1}, yz)$	&
$(a^{i+2}, a^2 b + b^{j+2}, ab^2)$ \\ \hline
$\bx_{i}^{-}\by_{j}^{+}						$& $(x^i, u+v, y^j)$		&	 $(x^{i}, z, y^j)$							&
$(a^{i+2}, a^2 b+ ab^2, b^{j+2})$ \\ \hline
$\bx_{\scriptscriptstyle \infty}\by_{\scriptscriptstyle  0}$									& $(ux+y)					$	 &	$(xz+y)						$					 &
$(a^2 b + b^2, ab^2)$ \\ \hline
$\bx_{\scriptscriptstyle \infty}\by_{j}^{-}$						& $(u+y^j, v)$	&	 $(xz+y^{j+1}, yz)	$					 &
$(a^2 b + b^{j+2}, ab^2)$ \\ \hline
$\bx_{\scriptscriptstyle \infty}\by_{j}^{+}$						& $(u+v, y^j)$	&	 $(z, y^j)					$					 &
$(a^2 b + a b^2, b^{j+2})$ \\ \hline
$\bx_{\scriptscriptstyle \infty}\by_{\scriptscriptstyle \infty}$					& $(u+v)						$	 &	 $(z)							$						 &
$(a^2 b+ a b^2) \cong (ab)$\\ \hline
$(\bx_{i}^{-}\by_{j}^{-}, 1, \lambda)$						& $(x^i + \lambda v, y^j + u)$	& $(x^{i+1} + \lambda yz, y^{j+1} + xz)$		 &
$(a^{i+2} + \lambda a b^2, b^{j+2} + a^2 b)$ \\ \hline
$(\bx_{i}^{-}\by_{j}^{+}, 1, \lambda)			$			& $(x^i + \lambda y^j, u+v)$		& $(x^i + \lambda y^j, z)$								 &
$(a^{i+2} + \lambda  b^{j+2}, a^2 b + a b^2)$ \\ \hline
\end{longtable}
\end{center}
\begin{remark}
The above list does not contain all indecomposable ideals of $\RT$. For example,
the ideal $$(a^2 + \lambda b^2, a^2 b + a b^2)$$
is not induced by an ideal in $\RA$ for any $\lambda \in \kk^*$.
\end{remark}

\noindent Let $\underline{\MF}(a^2 b^2)$ be the homotopy category of  matrix factorizations of $a^2 b^2$.
By a result of Eisenbud \cite{Eisenbud} there is an equivalence of triangulated categories
$
\xymatrix@M+1pt{
\underline{\CM}(\RT) \ar[r]^-{\sim} & \underline{\MF}(a^2 b^2).}
$
In the following table, we list the matrix factorizations of $a^2 b^2$ which originate from an  indecomposable ideal in $\RA$ (up to isomorphism and involution). Let $i, j \in \NN$ and $\lambda \in \kk^\ast$.
\begin{center}
\begin{longtable}{|r|l|}
\hline
{ideal in $\kk \llbracket a,b \rrbracket /(a^2 b^2)$} & { matrix factorization $(\phi,\psi)$ of $a^2 b^2$} \\ \hline \hline
\endhead
$(a^2)$ &
$\begin{pmatrix}
b^2
\end{pmatrix}\,
\begin{pmatrix}
a^2
\end{pmatrix} $ \\ \hline
$(a^2 b)$ & $
\begin{pmatrix}
b
\end{pmatrix}\,
\begin{pmatrix}
a^2 b
\end{pmatrix}$\\ \hline
$(a^{i+2}, a^2 b)$&
$\begin{pmatrix}
b & 0 \\ -a^i & b
\end{pmatrix}\,
\begin{pmatrix}
a^2 b & 0 \\ a^{i+2} & a^2 b
\end{pmatrix}$ \\ \hline
$(a^{i+1}+  b^{j+1},  a^2 b, a b^2  )$ &$
\begin{pmatrix}
ab &0 &0 \\ -a^{i}  &b  &0  \\ -b^{j} & 0 & a
\end{pmatrix}\,
\begin{pmatrix}
ab &0 &0 \\ a^{i+1}  &a^2 b  &0  \\ b^{j+1} & 0 & a b^2
\end{pmatrix}$ \\ \hline
$(a^{i+3},   a^2 b +  b^{2}, a b^2 )$ &$
\begin{pmatrix}
b & 0 & 0 \\  - a^{i+1} & a b & 0 \\ 0 & -b & a
\end{pmatrix}\,
\begin{pmatrix}
a^2 b & 0 & 0 \\  a^{i+2} & ab & 0 \\ a^{i+1} b & b^2 & a b^2
\end{pmatrix} $\\ \hline
$(a^3, a^2b + b^2)$ &$
\begin{pmatrix}
ab & 0 \\ -a^2 & a^2b
\end{pmatrix}\,
\begin{pmatrix}
ab & 0 \\ -a &  b
\end{pmatrix}$ \\ \hline
$(a^{i+2},  a^2 b + b^{j+2},  a b^2 )$ &$
\begin{pmatrix}
b & 0 & 0 \\ -a^{i} & ab & 0 \\ -a^{i-1} b^j & -b^{j+1} & a
\end{pmatrix}\,
\begin{pmatrix}
a^2 b & 0 & 0 \\ a^{i+1} & ab & 0 \\ 0 & b^{j+2} & a b^2
\end{pmatrix} $\\ \hline
$(a^{i+2},  b^{j+2},   a^2 b  + a b^2 )$ &$
\begin{pmatrix}
b &0 &0 \\ 0  & a &0  \\ -a^{i} & -b^{j} & ab
\end{pmatrix}\,
\begin{pmatrix}
a^2 b &0 &0 \\ 0  &a b^2  &0  \\ a^{i+1} & b^{j+1} & ab
\end{pmatrix}$ \\ \hline
$( a^2 b + b^{j+1},  a b^2  )$ &$
\begin{pmatrix}
ab & 0 \\ -b^{j} & a
\end{pmatrix}\,
\begin{pmatrix}
ab & 0 \\ b^{j+1} & a b^2
\end{pmatrix}$ \\ \hline
$( b^{j+2}, a^2 b +  a b^2  )$ &$
\begin{pmatrix}
a & 0 \\ -b^{j} & ab
\end{pmatrix}\,
\begin{pmatrix}
a b^2 & 0  \\ b^{j+1} & ab
\end{pmatrix} $\\ \hline
$( ab )$ &$
\begin{pmatrix}
ab
\end{pmatrix}\,
\begin{pmatrix}
ab
\end{pmatrix} $\\ \hline
$(a^{i+2}+ \lambda  a b^2, a^2 b + b^{j+2} )$ &
$\begin{pmatrix}
ab & -b^{j+1} \\ -a^{i+1} & \lambda a b
\end{pmatrix}\,
\begin{pmatrix}
ab & \lambda^{-1} b^{j+1} \\ \lambda^{-1} a^{i+1}& \lambda^{-1} ab
\end{pmatrix}$ \\ \hline
$(a^{i+2}+ \lambda b^{j+2}, a^2 b + a b^2)$&$
\begin{pmatrix}
ab & 0 \\ -\lambda b^{j+1} - a^{i+1}& ab
\end{pmatrix}\,
\begin{pmatrix}
ab & 0 \\ \lambda b^{j+1} + a^{i+1}& ab
\end{pmatrix}$ \\ \hline
\end{longtable}
\end{center}

\begin{remark} By Kn\"{o}rrer's periodicity \cite{Knoerrer2} the functor
\begin{align*}
\begin{array}{rl}
{\underline{\MF}(a^2 b^2 ) \quad} \overset{\sim}{\longrightarrow} & \underline{\MF}(a^2 b^2 + uv)  \\
\begin{pmatrix}
\phi, \psi
\end{pmatrix} \quad
\longmapsto &
{\begin{pmatrix}
\phi & - u \cdot I \\
v \cdot I & \psi
\end{pmatrix}\,
\begin{pmatrix}
\psi & v \cdot I \\
-u\cdot I & \phi
\end{pmatrix}}
\end{array}
\end{align*}
 is an equivalence of triangulated categories.
It allows to get
 explicit families of matrix factorizations of any potential of type $$a^2 b^2 + u_1 v_1 + \ldots + u_d v_d \in \kk\llbracket a, b, u_1, \dots, u_d, v_1, \dots, v_d\rrbracket.$$
 \end{remark}

\begin{remark}
Let $\mathsf{char}( \kk) \neq 2$.
Then there is a ring isomorphism
$$\kk \llbracket a,b,c \rrbracket / ( a^2 b^2 - c^2 ) \cong \kk \llbracket x,y,z \rrbracket / (z^2 - xyz) =: \RT_{\infty \infty 2}.$$
The indecomposable Cohen--Macaulay modules over the surface singularity $\RT_{\infty \infty 2}$ have been classified in \cite{BDNonIsol}.
On the other hand, Kn\"{o}rrer's correspondence \cite{Knoerrer2} relates $\RT_{\infty \infty 2}$ to $\RT_{\infty\infty}$ by a restriction functor
\begin{align*}
{\underline{\MF}(a^2 b^2 - c^2)} {\longrightarrow} \underline{\MF}(a^2 b^2),
\end{align*}
such that every indecomposable matrix factorization of $a^2 b^2$ appears as a \emph{direct summand} of the restriction of some indecomposable matrix factorization of $a^2 b^2 - c^2$.
With some efforts, one can compute the matrix factorizations of $a^2 b^2$ corresponding to Cohen--Macaulay $\RT_{\infty \infty 2}$--modules of small rank.
However, it is not straightforward to derive all indecomposable matrix factorizations of $a^2 b^2$ by this approach.
\end{remark}

 \begin{remark}
 The approach to classify indecomposable  Cohen--Macaulay modules using the technique of tame matrix problems is close  in spirit to the study of torsion free sheaves on degenerations of elliptic curves. See \cite{Survey} for a survey of the corresponding results and methods.
 \end{remark}

\subsection{\texorpdfstring{Some remarks on  the stable category of Cohen--Macaulay modules}{Some remarks on  the stable category of CM modules}}

Let $(A, \idm)$ be a Gorenstein singularity (of any Krull dimension $d$).
By a result of Buchweitz \cite{Buchweitz}, the natural functor
$$
\underline{\CM}(A) \lar D_{sg}(A) := \frac{D^b\bigl(\mod(A)\bigr)}{\Perf(A)}
$$
 is an equivalence of triangulated categories.
If the singularity $A$ is not isolated, then $\underline{\CM}(A)$ is $\Hom$-infinite \cite{PhilNotes}. On the other hand,
the stable category of Cohen--Macaulay modules $\underline{\CM}^{\mathsf{lf}}(A)$  is always a  $\Hom$--finite
triangulated subcategory of $\underline{\CM}(A)$. By a result of Auslander \cite{PhilNotes}, the category $\underline{\CM}^{\mathsf{lf}}(A)$
is $(d-1)$--Calabi--Yau. This means that for any objects $M_1$ and $M_2$ of $\underline{\CM}^{\mathsf{lf}}(A)$ we have an isomorphism
$$
\underline{\Hom}_A(M_1, M_2) \cong \DD\bigl(\underline{\Hom}_A(M_2, \Sigma^{d-1}(M_1))\bigr),
$$
functorial in both arguments $M_1$ and $M_2$, where $\DD$ is the Matlis duality functor and $\Sigma = \Omega^{-1}$ is the suspension functor.
In particular, if $A$ is a Gorenstein curve singularity, then  for any $M \in \underline{\CM}^{\mathsf{lf}}(A)$ the algebra $\underline{\End}_A(M)$ is Frobenius. Thus, Theorem \ref{T:main1} gives a family of examples of representation tame
$0$--Calabi--Yau triangulated categories and
Theorem  \ref{C:main} provides a complete and  explicit description of indecomposable objects in one of such categories
 $\underline{\CM}^{\mathsf{lf}}(\RP)$ for $\RP = \kk \llbracket x, y, z\rrbracket/(xy, z^2)$.

\end{document}